\theoremstyle{plain}
\def\Re{\operatorname{Re}}
\def\sym{\operatorname{Sym}}
\def\O{\operatorname{O}}
\def\Im{\operatorname{Im}}
\newcommand{\lf}{\lambda_f}
\theoremstyle{remark}
\newtheorem{remark}[thm]{Remark}
\begin{document}

\title{Explicit zero-free regions for automorphic $L$-functions}

\author{Steven Creech}
\address{Department of Mathematics\\ Brown University\\ Providence, RI 02912\\  United States}
\email{steven_creech@brown.edu}

\author{Alia Hamieh}
\address{Department of Mathematics and Statistics \\
        University of Northern British Columbia \\
        Prince George, BC V2N4Z9 \\
        Canada}
\email{alia.hamieh@unbc.ca}

\author{Simran Khunger}
\address{Department of Mathematics\\University of Michigan\\Ann Arbor, MI 48109\\ United States}
\email{skhunger@umich.edu}

\author{Kaneenika Sinha}
\address{Indian Institute of Science Education and Research Pune\\ Pune  Maharashtra 411008\\ India}
\email{kaneenika@iiserpune.ac.in}

\author{Jakob Streipel}
\address{Department of Mathematics \\
        University at Buffalo \\
        Buffalo, NY 14260 \\
         United States}
\email{jstreipe@buffalo.edu}

\author{Kin Ming Tsang}
\address{Department of Mathematics \\
        University of British Columbia \\
        Vancouver, BC V6T 1Z2 \\
        Canada}
\email{kmtsang@math.ubc.ca}

\thanks{}
\date{\today}

\begin{abstract} 
Let $L(s,f)$ be the $L$-function associated with a newform $f$ of even weight $k$, squarefree level $N$ and trivial nebentypus. In this paper, we establish a new explicit zero-free region for $L(s,f)$. More precisely, we prove that $L(s,f)$ does not vanish in the region $\Re(s)\geq 1-\frac{1}{C\log(kN\max(1,\abs{\Im(s)}))}$ with $C=16.7053$ if $\abs{\Im(s)}\geq 1$ or $\abs{\Im(s)}\leq \frac{0.30992}{\log(kN)}$ and $C=16.9309$ if $\frac{0.30992}{\log(kN)}<\abs{\Im(s)}\leq 1$. This improves a result of Hoey et al.\ where $445.994$ was shown to be an admissible value for $C$.
\end{abstract}

\subjclass[2010]{11M41, 11M26  (primary), 11F11, 11F66,11F67 (secondary)}

\keywords{modular forms, automorphic $L$-functions, zero-free region}

\maketitle

\setcounter{tocdepth}{1}
\tableofcontents

\markboth{\scalebox{0.9}{Creech, Hamieh, Khunger, Sinha, Streipel, Tsang}}{Explicit zero-free regions for automorphic $L$-functions}

\section{Introduction}\label{sec:Introduction} 

Let $k$ and $N$ be positive integers with $k$ even and $N$ squarefree.  Let $f$ denote a Hecke newform of weight $k$ with respect to the congruence subgroup $\Gamma_0(N)=\left\{\begin{bmatrix}
a & b \\
c & d 
\end{bmatrix}\in\mathrm{SL}_2(\mathbb{Z}):\;N\mid c	
\right\}$. It is known that $f$ admits a Fourier expansion of the form
\[
    f(z) = \sum_{n=1}^{\infty}{n^{\frac{k-1}{2}}}\lf(n) q^n, \qquad q = e^{2\pi i z}.
\]
Here,
$\lf(1) = 1$, and for each $n \in \N$, we have
\[
    \frac{T_n(f(z))}{n^{\frac{k-1}{2}}} = \lf(n) f(z),
\]
where $T_n$ denotes the $n$-th Hecke operator acting on the space $S_k(N)$ of cusp forms of weight $k$ with respect to $\Gamma_0(N)$. 

The standard $L$-function $L(s,f)$ corresponding to $f$ is defined as
\[
    L(s,f) = \sum_{n=1}^{\infty} \frac{\lf(n)}{n^s},
\]
which converges absolutely for $\Re(s) > 1$.  $L(s,f)$ has an Euler product representation of the form
\[
    L(s,f) = \prod_p \left(1 - \frac{\lf(p)}{p^s} + \frac{\chi_0(p)}{p^{2s}}\right)^{-1},\qquad\Re(s) > 1,
\]
where $\chi_0(p) = 1$ if $p \nmid N$ and $\chi_0(p) = 0$ if $p \mid N$. It is well-known that $L(s,f)$ can be analytically continued to an entire function on the complex plane, and $L(s,f) \neq 0$ when $\Re(s) > 1$.  This leads to the study of zero-free regions of $L(s,f)$ in the vertical strip $0 \leq \Re(s) \leq 1$. 

In the case of the Riemann zeta function and the Dirichlet $L$-functions, a delineation of explicit zero-free regions combined with explicit zero density estimates leads to vital applications such as effective error terms in the prime number theorem, its analogue for primes in arithmetic progressions and the average orders of several arithmetic functions. The last two decades have witnessed a surge of activity in this direction, most of which aimed at sharpening the results of the 60-year old paper of Rosser and Schoenfeld \cite{rosser-schoenfeld}. Among the many recent contributions to these efforts, we mention \cite{ford}, \cite{kadiri-zeta}, \cite{kadiri-dedekind}, \cite{trudgian},\cite{ramare}, \cite{platt-trudgian},\cite{kadiri-dirichlet}, \cite{KNW}, \cite{broadbent-et-al},   \cite{trudgian-platt2}, \cite{bmor}, \cite{khale}, \cite{bellotti}, and \cite{DGLSW}.

In general, explicit determination of zero-free regions of $L$-functions associated with an arithmetic or algebraic structure helps us better understand the nature and properties of this structure.  For example, the zero-free regions for $L(s,f)$ and the symmetric power $L$-functions $L(s,\sym^m f)$ for $m \geq 2$ (see Section \ref{sec: L-functions of Modular Forms: Definitions, Functional Equations and Explicit Formulae} for the definition) yield effective estimates for the sums 
$\sum_{p \leq x}\lf(p^m)$.
Analogous to the fundamental ideas in the proof of the prime number theorem, wider zero-free regions for $L(s,\sym^m f)$ lead to sharper estimates for these sums.  
These estimates further lead to effective error terms in the Sato-Tate distribution law for the Hecke eigenvalues $\{\lf(p)\}_{p \to \infty} \subset [-2,2]$ of a fixed non-CM Hecke newform $f$ (see, for example, \cite{KM}, \cite{RT}, \cite{Thorner} and \cite{HIJS}).

The first zero-free region result for modular forms beyond the classical context of Dirichlet $L$-functions and their extensions to number fields is due to Moreno \cite{Moreno} who established a zero-free region
for the $L$-function associated with the Ramanujan delta function $\Delta(z)=\sum_{n=1}^{\infty}\tau(n)q^n\in S_{12}(1)$. This zero-free region took the form \begin{equation}\label{eqn:moreno-zfr}
    \Re(s) \geq 1 - \frac{1}{c\log(\abs{\Im(s)} + 2)},
\end{equation}
for some constant $c >0$.  Using \eqref{eqn:moreno-zfr}, Moreno derived the estimate
\[
    \sum_{p \leq x}\tau(p)p^{-\frac{11}{2}} \log p = \O\left(x e^{-A(\log x)^{1/2}}\right),
\]
where $A$ is a suitably chosen positive constant, thereby improving Rankin's estimates in \cite{Rankin1-2} and  \cite{Rankin3}). 

 A generalization of Moreno's zero-free region to all newforms of level $N$ and weight $k\geq1$ is given in \cite[Theorem 5.39]{IK} as follows: there exists an absolute constant $c>0$ such that $L(s,f)$ does not have any zeros in the region
\[
    \Re(s) \geq 1 - \frac{1}{c\log (N(\abs{\Im(s)} + k + 3))},
\]
except possibly one simple real zero, called the exceptional zero, in which case $f$ is self-dual. We know now that this exceptional zero can be provably eliminated thanks to the work of Hoffstein and Ramakrishnan \cite{hoffstein-ramakrishnan} on the non-existence of Siegel zeros for $L$-functions of cusp forms on $\mathrm{GL}(2)$.

For a newform $f$ of squarefree level $N$ and weight $k$, Thorner \cite{Thorner} derived an unconditional estimate for $\sum_{p \leq x}\lf(p^m) \log p$.  An essential ingredient in Thorner's work is the formulation of zero-free regions for $L(s,\sym^m f)$.  Thorner shows that there exists an absolute constant $c>0$ such that $L(s,\sym^m f)$ does not have any zeros in the region 
\[
    \Re(s) \geq 1 - \frac{1}{c\log (\mathfrak q(\sym^m f)(3 + \abs{\Im(s)}))}, 
\]
where $\mathfrak q(\sym^m f)$ denotes the analytic conductor of $L(s,\sym^m f)$.

The ``state-of-the-art'' result in this direction, which also provides explicit constants in the determination of zero-free regions is the following:
\begin{unmthm}[\cite{HIJS}]
Let $N$ be a squarefree positive integer, and let $k$ be an even positive integer.  For a newform $f$ of weight $k$ with respect to $\Gamma_0(N)$, let $\beta_m + it$ be a zero of $L(s,\sym^m f)$ such that $\beta_m > 1/2$.  Then,
\[
    \beta_m \leq  1 - \frac{1}{C_m \log (1.214(m+7) N(k-1)\sqrt{1 + t^2})}, 
\]
where 
\[
    C_m = \frac{(m+7)^2}{2(2 - \sqrt{3})^2}.
\]
In particular, if $m=1$, let $\beta + it$ be a zero of $L(s,f)$ such that $\beta > 1/2$.  Then,
\[
    \beta \leq  1 - \frac{1}{C \log (9.712 N(k-1)\sqrt{1 + t^2})}, 
\]
where \begin{equation}\label{eqn:HIJTconstant}C = \frac{64}{2(2 - \sqrt{3})^2} < 445.994.\end{equation}
\end{unmthm}
The general analytic framework applied to obtain zero-free regions for $L(s,\sym^m f)$  issues from the property that for each $m \geq 1$, $L(s, \sym^mf)$ can be analytically continued to the entire complex plane and has a functional equation.  This follows from the fact that $L(s, \sym^mf)$ is the $L$-function of a cuspidal automorphic representation on $\textrm{GL}_{m+1}(\mathbb A_{\Q})$, proved by Newton and Thorne \cite{NT1,NT2}.\\

We note here that \cite{HIJS} provides explicit constants in the zero-free region of $L(s, \sym^mf)$ for any positive integer $N$. Let $q(\sym^nf)$ denote the arithmetic conductor of $L(s,\sym^n f)$, and let $Q$ be a positive constant such that $q(\sym^nf) \leq Q^{n+1}$ for all $1 \leq n \leq 2m + 1$.  Then, $L(s,\sym^m f) \neq 0$ in the region 
\begin{equation*}\label{m-ZFR}
\Re(s) \geq 1 - \frac{2(2 - \sqrt{3})^2}{(m+7)^2 \log\left (\frac{2}{\sqrt{e}} (k-1)Q(m+7)\sqrt{1 + \abs{\Im(s)}^2}\right)}.
\end{equation*}
If $N$ is squarefree, then $q(\sym^mf) = N^m$ for each $m \geq 1$.  However, such closed form expressions are not available for non-squarefree values of $N$ (see \cite[Remark 2.1]{HIJS}).

The goal of the current article is to further improve the constant $C$ given in \eqref{eqn:HIJTconstant} in the zero-free region for $L(s,f)$.  We prove the following theorem:
\begin{thm}\label{main-thm}
Let $N$ be a squarefree positive integer, and let $k$ be an even positive integer.  For a newform $f$ of weight $k$ with respect to $\Gamma_0(N)$, let $\beta + it$ be a zero of $L(s,f)$ such that $\beta > 1/2$. 
Then, 
\[
\beta \leq \begin{cases}
    1 - \frac{1}{16.7053 \log (kN\abs{t})}&\text{ if }\abs{t} \geq 1\\
    1 - \frac{1}{16.9309 \log (kN)} &\text{ if }\frac{0.30992}{\log (kN)} < \abs{t} < 1\\
   1 - \frac{1}{16.7053 \log (kN)}&\text{ if }\abs{t} \leq \frac{0.30992}{\log (kN)}.
\end{cases}
\]
\end{thm}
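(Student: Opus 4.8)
The plan is to follow the classical Hadamard--de la Vall\'ee Poussin method, made fully explicit, and to optimize every constant that enters. The starting point is the non-negativity of the trigonometric polynomial $3 + 4\cos\theta + \cos 2\theta \geq 0$, which we apply to the Dirichlet coefficients of $\log L(s,f)$, $\log L(s, \sym^2 f)$, and $\zeta(s)$ via the factorization $L(s, f\times f) = \zeta(s)L(s,\sym^2 f)$. Concretely, one writes down, for $\sigma = \Re(s) > 1$, the inequality
\[
3\left(-\frac{\zeta'}{\zeta}(\sigma)\right) + 4\Re\left(-\frac{L'}{L}(\sigma + it, f)\right) + \Re\left(-\frac{L'}{L}(\sigma + 2it, \sym^2 f)\right) \geq 0,
\]
which reflects the positivity of the relevant Rankin--Selberg coefficients. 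The first step, then, is to record precise expressions for the logarithmic derivatives of $\zeta$, $L(s,f)$, and $L(s,\sym^2 f)$ near $s = 1$ and in terms of their Hadamard products, isolating the pole of $\zeta$ at $s=1$, the (possible) pole of $L(s,\sym^2 f)$ --- which does not occur since $\sym^2 f$ is cuspidal --- and the sum over zeros.

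Second, I would feed in the functional equations and gamma-factor data for $L(s,f)$ and $L(s,\sym^2 f)$ (which should be stated in the section referenced as ``L-functions of Modular Forms: Definitions, Functional Equations and Explicit Formulae'') to get explicit upper bounds for $\Re(-L'/L)$ in terms of $\log(kN)$, $\log(k N |t|)$, and the archimedean parameters, plus a controlled sum over the non-trivial zeros. The standard device is to bound $\Re\sum_\rho \frac{1}{s - \rho} \geq 0$ (dropping all zeros except the alleged zero $\beta + it$ and, in the $\sym^2$ term, keeping only its contribution after accounting for the fact that $\sym^2 f$ could be dihedral), while bounding $\Re\sum_\rho \frac{1}{s-\rho}$ from above by a logarithmic conductor term for the other two $L$-functions. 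This converts the positivity inequality into a concrete relation of the shape
\[
\frac{4}{\sigma - \beta} \leq 3\cdot\frac{1}{\sigma - 1} + A\log\bigl(kN\max(1,|t|)\bigr) + (\text{small correction terms}),
\]
valid for $1 < \sigma \leq 2$, where $A$ is an explicit constant assembled from the gamma-factor bounds.

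Third comes the optimization: one sets $\sigma = 1 + \lambda/\log(kN|t|)$ for a free parameter $\lambda$, rearranges to solve for the largest possible $\beta$, and chooses $\lambda$ (and the threshold separating the ranges $|t| \geq 1$, $\frac{0.30992}{\log(kN)} < |t| < 1$, and $|t| \leq \frac{0.30992}{\log(kN)}$) to minimize the resulting constant $C$. The three-case structure in Theorem~\ref{main-thm} reflects exactly this: for $|t|$ bounded away from zero the conductor $\log(kN|t|)$ behaves uniformly, for very small $|t|$ one uses $\log(kN)$ directly and exploits that $2it$ is then also near the real axis, and the intermediate regime requires a slightly worse constant because neither simplification is clean. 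The genuinely delicate points --- and where I expect the real work to lie --- are (i) getting the \emph{sharpest} explicit constant in the bound for $\Re(-L'/L)$ coming from the archimedean factors and the zero sum, since this $A$ directly controls $C$ and is where the improvement over the factor-of-$27$-larger constant $445.994$ must come from (likely via a more careful treatment of the gamma factors using, e.g., a smoothed explicit formula or a sharper bound on $\Re\,\Gamma'/\Gamma$ rather than crude estimates); (ii) handling the $\sym^2$ term honestly, including the possibility that $L(s,\sym^2 f)$ has a zero near $1 + 2it$, which forces one either to use a second positivity relation or to absorb that zero's contribution; and (iii) the small-$|t|$ regime, where the zero $\beta + it$ and its conjugate $\beta - it$ nearly coincide and one should use the pair $\frac{1}{\sigma - \beta - it} + \frac{1}{\sigma - \beta + it}$ to gain an extra factor, together with a Landau-type argument to rule out a real exceptional zero (or invoke Hoffstein--Ramakrishnan, already cited). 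Once these ingredients are in place, the final constants $16.7053$, $16.9309$, and the threshold $0.30992$ emerge from a numerical optimization of the free parameters, which I would present as a short table rather than derive by hand.
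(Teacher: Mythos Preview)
Your plan is the classical Hadamard--de la Vall\'ee Poussin template with the $3+4\cos\theta+\cos 2\theta$ inequality, and you locate the expected improvement over $445.994$ in ``a more careful treatment of the gamma factors.'' This is not where the gain comes from, and the approach you outline cannot reach $C\approx 16.7$; it is essentially the method that \emph{produces} the $445.994$. Two structural ingredients are missing.

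First, the paper does not use the quadratic $3+4\cos\theta+\cos 2\theta$. It uses a \emph{quartic} polynomial $P_4(\theta;n)=8(a+\Lambda_f(n)\cos\theta)^2(b+\Lambda_f(n)\cos\theta)^2$, with $(a,b)$ optimized numerically. Expanding in powers of $\Lambda_f(n)$ forces one to introduce the auxiliary products $L_{(m)}(s,f)$ for $0\le m\le 4$, built from $\zeta$, $L(s,f)$, $L(s,\sym^2 f)$, $L(s,\sym^3 f)$, and $L(s,\sym^4 f)$ together with local corrections at $p\mid N$; the positivity statement (Proposition~\ref{prop:positivity}) is then a nine-term linear combination of their logarithmic derivatives at $\sigma+imt$ for $m=0,1,2,3,4$, not a three-term one involving only $\zeta$, $L(s,f)$, $L(s,\sym^2 f)$. (Incidentally, your displayed inequality with $-L'/L(\sigma+2it,\sym^2 f)$ alone in the $\cos 2\theta$ slot is not non-negative term-by-term: one needs $L_{(2)}=L(s,\chi_0)L(s,\sym^2 f)$ there so that the coefficient is $\Lambda_f(n)^2$, and even then $3+4x\cos\theta+x^2\cos 2\theta$ is negative at $x=2$, $\theta=\pi$.)

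Second, and this is where most of the numerical gain lives, the paper uses Ste\v{c}kin's differencing: every occurrence of $-\Re(L'/L)(\sigma+imt)$ is replaced by the difference $-\Re(L'/L)(\sigma+imt)+\frac{1}{\sqrt{5}}\Re(L'/L)(\sigma_1+imt)$ with $\sigma_1=\tfrac12+\tfrac12\sqrt{1+4\sigma^2}$. Lemma~\ref{lem:Stechkin} guarantees that the contribution of each non-trivial zero to this difference remains non-negative, so one may still isolate the target zero $\rho_0$; but the gamma-factor main term is now multiplied by $\kappa=1-1/\sqrt{5}$, cutting the coefficient of $\log(kN|t|)$ by almost half. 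Without this device your constant $A$ in $\frac{4}{\sigma-\beta}\le\frac{3}{\sigma-1}+A\log(kN|t|)$ is pinned near its classical value regardless of how sharply you estimate $\Gamma'/\Gamma$. The final constants $16.7053$ and $16.9309$ arise from optimizing $(a,b)$ and the parameter $r$ in $\sigma=1+r/\log(kN|t|)$ against the quantity $(A+B-2\sqrt{AB})/(C\kappa)$ with $A,B,C$ explicit polynomials in the $a_i$; the threshold $0.30992$ and the slightly worse middle-range constant come from matching this to a separate argument for very small $|t|$ (Lemma~\ref{lem:positivity-t-too-small}, based on $(1+\Lambda_f(n))^2\ge 0$, again with Ste\v{c}kin differencing built in). Your description of the small-$|t|$ case is qualitatively right, but the intermediate range is handled not by a different simplification but by carrying extra terms like $\frac{(\sigma-1)}{(\sigma-1)^2+4t^2}$ through the main inequality and absorbing them via an auxiliary parameter $\delta$.
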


In order to prove Theorem \ref{main-thm}, we apply a classical technique due to Ste\v{c}kin \cite{Stechkin} as employed by McCurley \cite{McCurley} in order to obtain explicit zero-free regions for Dirichlet $L$-functions. To the best of our knowledge, this is the first instance in which this technique, in combination with other methods from explicit number theory, has been applied to the setting of $\mathrm{GL}(2)$ automorphic $L$-functions. However, it is worth mentioning here that the methods used in \cite{McCurley} have been further refined, and its results have been improved by Kadiri in \cite{kadiri-dirichlet} in the setting of Dirichlet $L$-functions.

\subsection{Organization of the paper}
 The technique of Ste\v{c}kin, and the preliminaries required for its application to $L(s,f)$ are described in Section \ref{sec:Preparation}.  This includes a discussion of the symmetric power $L$-functions associated to $f$, their functional equations and explicit formulae.  We also describe certain auxiliary functions that will be required in the proof of Theorem \ref{main-thm}.  In Section \ref{sec: Notation and Preliminary Lemmas}, we introduce some notation to be used throughout the article, and delineate the lemmas at the heart of Ste\v{c}kin's differencing argument, along with their modification by McCurley, and further adaptation for the purposes of proving Theorem \ref{main-thm}.  In Section \ref{sec:positivity}, we define a quartic trigonometric polynomial, and indicate how the nonnegativity of this polynomial leads to an inequality (see Proposition \ref{prop:positivity}) involving the Ste\v{c}kin differences of the logarithmic derivatives of the auxiliary $L$-functions introduced in Section \ref{sec: log-der-lf}.  We indicate here the choice of the coefficients of this polynomial which optimizes the upper bounds for the real part of the zeroes of $L(s,f)$ derived from the methods of this article.  In Section \ref{sec: Bounds for Auxiliary Terms}, we derive upper bounds for the Ste\v{c}kin difference of the logarithmic derivatives of each auxiliary $L$-function that appears in Proposition \ref{prop:positivity}.  Finally, in Sections \ref{sec: ZFR for t large} and \ref{sec: ZFR for t small}, we combine the bounds of Section \ref{sec: Bounds for Auxiliary Terms} with the nonnegativity of our quartic trigonometric polynomial to obtain Theorem \ref{main-thm}.  While Section \ref{sec: ZFR for t large} focuses on the case when $\abs{t} \geq 1$, Section \ref{sec: ZFR for t small} covers the case when $\abs{t} < 1$.  We note here that there are subtle but important differences in the treatment of the case when $\abs{t}$ is ``very'' small, that is, when $\abs{t}\leq \frac{\gamma}{\log kN}$ for a carefully chosen $\gamma>0$, and when $\abs{t}$ is not too small, that is, when $\frac{\gamma}{\log kN}<\abs{t}< 1$.  This leads to a slightly different constant in the bound for $\beta$ when $\abs{t}$ is not too small.

\section{Preparation for the Proof of Theorem \ref{main-thm}}\label{sec:Preparation}

\subsection{Isolating the zeros of \texorpdfstring{$L$}{L}-functions}\label{sec:McCurley}

One of the earliest arguments to derive zero-free regions for the Riemann zeta function goes back to the classical proof of the fact that $\zeta(s) \neq 0$ if $\Re(s) = 1$.  This proof depends on the trigonometric inequality
\begin{equation}\label{eqn:trig}3 + 4\cos \theta + \cos 2\theta = 2(1 + \cos\theta)^2 \geq 0, \qquad \forall \theta \in \mathbb{R}.\end{equation}
For $s = \sigma + it,\,\sigma > 1$, we have
\[
    -\frac{\zeta'}{\zeta}(s) = \sum_{n=1}^{\infty} \frac{\Lambda(n)}{n^{\sigma}}\left( \cos(t\log n) - i \sin(t\log n)\right),
\]
where $\Lambda(n)$ is the von Mangoldt function.

By \eqref{eqn:trig}, we have
\begin{equation*}
    \begin{split}
&\Re\left(-3\frac{\zeta'} {\zeta}(\sigma) - 4 \frac{\zeta'}{\zeta}(\sigma + it) - \frac{\zeta'}{\zeta}(\sigma + 2it)\right)
= \sum_{n=1}^{\infty} \frac{\Lambda(n)}{n^{\sigma}} \left(3 + 4\cos (t\log n) + \cos (2t\log n)\right) \geq 0.
\end{split}
\end{equation*}

The above trigonometric inequality motivates several techniques to identify a non-trivial zero of $\zeta(s)$ and obtain wider zero-free regions in the critical strip.  The classical template, due to Hadamard and de la Vallee Poussin, uses the Hadamard factorization for 
$\xi(s) = (s-1)\zeta(s)\Gamma\left(1 + \frac{s}{2}\right)\pi^{-s/2}$ to obtain (see, for example, \cite[Section 10.2]{MV})
\begin{equation}\label{classical-Hadamard}
\Re\left(-\frac{\zeta'}{\zeta}(s)\right) = -\frac{\log \pi}{2} + \Re\left(\frac{1}{s-1}\right) + \frac{1}{2}\Re\frac{\Gamma'}{\Gamma}\left(1 + \frac{s}{2}\right) - \sum_{\rho} \Re\left(\frac{1}{s- \rho} \right),
\end{equation}
where $\sum_{\rho}$ runs over all the zeros of $\zeta(s)$ in the region $0 \leq \Re(s) \leq 1$. Observing that 
\[
    \Re\left(\frac{1}{s- \rho} \right)> 0,
\]
we isolate a single zero $\rho = \beta + it_0$ with $\abs{t_0}\geq 2$ in the above sum, and derive the inequality,
\[
    \Re\left(-\frac{\zeta'}{\zeta}(s)\right) <-\frac{\log \pi}{2} + \Re\left(\frac{1}{s-1}\right) + \frac{1}{2}\Re\frac{\Gamma'}{\Gamma}\left(1 + \frac{s}{2}\right) - \Re\left(\frac{1}{s- (\beta + it_0)}\right).
\]
Choosing $s = \sigma + it_0$ and applying standard bounds to the term $\Re\frac{\Gamma'}{\Gamma}\left(1 + \frac{s}{2}\right)$, we obtain
\[
    -\Re\left(\frac{\zeta'}{\zeta}(\sigma + it_0)\right) < A (\log \abs{t_0}) - \frac{1}{\sigma - \beta},
\]
for a positive constant $A$.
Inserting the above inequality into 
\[
    0 \leq \Re\left(-3\frac{\zeta'} {\zeta}(\sigma) - 4 \frac{\zeta'}{\zeta}(\sigma + it_0) - \frac{\zeta'}{\zeta}(\sigma + 2it_0)\right),
\]
we get
\[
    \frac{4}{\sigma - \beta} < \frac{3}{\sigma - 1} + B\log\abs{t_0},\qquad \text{for }\sigma >1,\,\abs{t_0} \,\geq 2.
\]
Choosing
\[
    \sigma = 1 + \frac{1}{(B+1) \log \abs{t_0}},
\]
one can show that there exists a positive constant $c$ such that if $\zeta(\beta + it_0) = 0$ and $\abs{t_0} \geq 2$, then
\[
    \beta \leq  1 - \frac{1}{c\log \abs{t_0}}.
\]
Using a clever modification of the above argument, Ste\v{c}kin \cite{Stechkin} obtains a precise value for $c$  if $\abs{t_0}$ is sufficiently large.  Ste\v{c}kin's argument is a fundamental ingredient in this article.  We explain it briefly and refer the reader to Section \ref{sec: Notation and Preliminary Lemmas} for further details.
Consider a trigonometric polynomial 
\[
    P_n(\theta) = \sum_{k=0}^n a_k \cos k\theta
\]
such that $a_k \geq 0$ for each $0 \leq k \leq n$, $a_0 < a_1$ and $P_n(\theta) \geq 0$ for all $\theta \in \R$.  

We set $\phi =\frac{1+\sqrt{5}}{2}$ and choose a real number $1<\sigma<\phi$. For a given $\sigma$, we set $\sigma_1 = \frac12+\frac12\sqrt{1+4\sigma^2}$.  We denote $s = \sigma +it$ and $s_1 = \sigma_1 + it$.  Further, we denote
\[
    G(z) = \Re \left(-\frac{\zeta'}{\zeta}(z)\right),\qquad \Re(z) > 1.
\]
Since $P_n(\theta) \geq 0$, we have
\begin{equation}\label{P_n}
    \begin{split}
    &S(t) = \sum_{k=0}^n a_k \left(G(\sigma +ikt) - \frac{1}{\sqrt{5}}G(\sigma_1 + ikt)\right)\\
    &=\sum_{n=2}^{\infty} \Lambda(n) \left(\frac{1}{n^{\sigma}} - \frac{1}{\sqrt{5} n^{\sigma_1}}\right)P_n(t\log n) \geq 0.
    \end{split}
\end{equation}
On the other hand, by an application of \eqref{classical-Hadamard}, Ste\v{c}kin observes that if $\beta + it_0 $ is a zero of the zeta function such  that $1/2 < \beta <1$ and $t_0 \geq T \geq 12$, then, for $k \geq 1$,
\begin{equation}\label{kgeq1}
G(\sigma +ikt_0) - \frac{1}{\sqrt{5}}G(\sigma_1 + ikt_0) \leq \frac{1 - \frac{1}{\sqrt{5}}}{2} \log kt_0 - \frac{1}{\sigma - \beta} + A_3(T),
\end{equation}
where 
\[
    A_3(T) = \left(1 - \frac{1}{\sqrt{5}}\right)\left(\frac{\gamma}{2} + 1 - \frac{3}{2}\log 2 - \log \pi\right) + \left(\frac{\pi}{4} + \frac{4}{T}\right)\frac{1}{T} + \frac{1}{\sqrt{5}}\left(\frac{\pi}{4} + \frac{2}{T}\right)\frac{1}{T}.
\]
We note here that to obtain the above inequality, one requires explicit bounds for terms of the form
\[
    \Re\left(\frac{\Gamma'}{\Gamma}\left(\frac{s}{2} + 1\right) - \frac{1}{\sqrt{5}}\frac{\Gamma'}{\Gamma}\left(\frac{s_1}{2} + 1\right)\right).
\]
Combining \eqref{P_n} and \eqref{kgeq1} with a bound for $G(\sigma) - \frac{1}{\sqrt{5}}G(\sigma_1)$, we isolate $\beta$ as follows:
\[
    \frac{a_1}{\sigma - \beta} < \frac{a_0}{\sigma - 1} + \sum_{k=1}^n a_k \left(\frac{1 - \frac{1}{\sqrt{5}}}{2}\right) \log\, t_0 + \sum_{k=1}^n a_k \left(\frac{1 - \frac{1}{\sqrt{5}}}{2}\right) \log\,k + A_3 \sum_{k=1}^n a_k.
\]
The choice 
\[
    P_4(\theta) =8(0.9126+\cos\theta)^2(0.2766+\cos\theta)^2
\]
yields
\[
    \beta \leq  1 - \frac{1}{9.65\log t_0} .
\]
In other words, $\zeta(s) \neq 0$ in the region
\[
    \sigma > 1 - \frac{1}{9.65 \log \abs{t}} ,\qquad\abs{t}\, \geq 12.
\]
The explicit constant in the zero-free region for $\zeta(s)$ obtained by Ste\v{c}kin's argument is significantly sharper than those obtained before.   McCurley \cite{McCurley} adapted the above argument to derive explicit zero-free regions for Dirichlet $L$-functions, extending Ste\v{c}kin's argument to small values of $\abs{t}$ as well.  For a positive integer $q$, let $\mathcal L_q(s)$ be the product of the $\phi(q)$  Dirichlet $L$-functions $L(s,\chi)$ associated with the characters $\chi$ modulo $q$.  Then, $\mathcal L_q(s)$ has at most a single zero in the region
\begin{equation}\label{eqn:mccurley}\sigma > 1 - \frac{1}{9.65\log M} ,\qquad M = \max\{q, q\abs{t}, 10\}.\end{equation}
This exceptional zero is a simple real zero of a Dirichlet $L$-function associated with a real non-principal character modulo $q$.  
We note here that the result of McCurley has been significantly improved by Kadiri in \cite{kadiri-dirichlet} where the explicit constant $9.65$ in \eqref{eqn:mccurley} has been replaced by $5.60$ (building on the ideas pioneered in \cite{kadiri-zeta}).

In this article, we adapt the ideas of Ste\v{c}kin and McCurley for the modular $L$-function $L(s,f)$ associated with a Hecke newform $f$ of even weight $k$ and squarefree level $N$.  This argument will require us to consider the symmetric power $L$-functions $L(s,\sym^mf)$ for $ 0 \leq m \leq 4$.  In the next section, we review fundamental analytic properties of these $L$-functions, such as the analogue of the classical Hadamard factorization \eqref{classical-Hadamard} for $L(s,\sym^mf)$, which will be required for the proof of Theorem \ref{main-thm}.

\subsection{Symmetric power \texorpdfstring{$L$}{L}-functions: definitions, functional equation and explicit formulae}\label{sec: L-functions of Modular Forms: Definitions, Functional Equations and Explicit Formulae} 

\subsubsection{Definitions}\label{sec: definitions-Symmetric power}
Let $N$ be a positive squarefree integer and let $k$ be a positive even integer. Consider a Hecke newform $f$ of weight $k$ with respect to $\Gamma_0(N)$. We mention here that  $f$ is necessarily non-CM because it has squarefree level and trivial nebentypus (see the proof of the
Corollary of Theorem A in \cite{ramakrishnan-appendix}).

In this section,  we start by reviewing fundamental analytic properties of the symmetric power $L$-functions $L(s,\sym^m f)$ associated with the newform $f$. We refer the reader to \cite[Sections 5.3 and 5.11]{IK}, \cite[Section 1]{LW}, and \cite[Section 2]{HIJS} for further details. 

Following the notation in the introduction, $f$ has a Fourier expansion of the form
\[
    f(z) = \sum_{n=1}^{\infty}{n^{\frac{k-1}{2}}}\lf(n) q^n,
\]
and the standard $L$-function corresponding to $f$ is defined by the Dirichlet series
\[
    L(s,f) = \sum_{n=1}^{\infty} \frac{\lf(n)}{n^s},\qquad \Re(s) > 1.
\]

Moreover, for $\Re(s)>1$, we have the Euler product representation
\begin{align*}
L(f,s) &= \prod_{p\mid N}\left(1-\lambda_f(p)p^{-s}\right)^{-1}\prod_{p\nmid N}\left(1-\lambda_f(p)p^{-s}+p^{-2s}\right)^{-1}\nonumber\\&= \prod_{p}  \left(1-\frac{\alpha_1(p)}{p^s} \right)^{-1}\left(1-\frac{\alpha_2(p)}{p^s} \right)^{-1},
\end{align*}
where $\lambda_f(p)=\alpha_1(p)+\alpha_2(p)$ and more generally $$\lf(p^r) = \frac{\alpha_1(p)^{r+1} - \alpha_2(p)^{r+1}}{\alpha_1(p) - \alpha_2(p)},$$ for all  $r\geq 1$ and all primes $p$. Moreover, $\alpha_1(p)\alpha_2(p)=1$ if $p\nmid N$ and $\alpha_1(p)\alpha_2(p)=0$ if $p\mid N$. The celebrated work of Deligne (see \cite{Deligne1} and \cite{Deligne2}) asserts that $\abs{\alpha_1(p)} = \abs{\alpha_2(p)} =1$ if $p\nmid N$. If $p\mid N$, we shall assume without loss of generality that $\alpha_2(p)=0$. By the Atkin-Lehner-Li theory (see \cite[Theoem~3]{Li}), we infer that 
$\alpha_1(p) = \epsilon_f(p) p^{-\frac12}$ if $p \mathrel{\Vert} N$. Here $\epsilon_f(p)=\pm1$, and $-\epsilon_f(p)$ is the eigenvalue of $f$ under the action of the Atkin-Lehner involution $W_p$.

For any integer $m \geq 0$, the symmetric power $L$-function $L(s,\sym^m f)$ is given by the Euler product
\begin{equation}\label{EPofL}
L(s,\sym^mf) = \prod_p \prod_{0 \leq j \leq m}\left( 1 - \frac{\alpha_1(p)^{m-j}\alpha_2(p)^j}{p^s}\right)^{-1},
\end{equation}
and the Dirichlet series representation 
\[
    L(s,\sym^mf) = \sum_{n=1}^{\infty} \frac{\lambda_{\sym^m f}(n)}{n^s},\qquad\Re(s) > 1.
\]
Note that $L(s,\sym^0f) = \zeta(s)$ and $L(s,\sym^1f) = L(s,f)$. 

\subsubsection{The functional equation and explicit formulae}\label{sec: fn-eqn-explicit-formula} We set
\begin{equation}\label{Gamma-symf}
\gamma(s,\sym^mf) = 
\begin{cases}
\prod_{l=0}^n \Gamma_{\mathbb{C}}\left(s + \left(l + \frac{1}{2}\right)(k-1)\right)&\text{ if }m = 2n+1\\
\Gamma_{\R}\left(s + \delta_{2 \nmid n}\right)\prod_{l=1}^n \Gamma_{\mathbb{C}}(s + l(k-1))&\text{ if }m = 2n,
\end{cases}
\end{equation}
where $\Gamma_{\mathbb{C}}(s) = 2(2\pi)^{-s}\Gamma(s)$, $\Gamma_{\R}(s) = \pi^{-s/2}\Gamma(s/2)$, and $\delta_{2 \nmid n} = 1$ if 2 does not divide $n$, and 0 otherwise.
The completed $L$-function
\[
    \Lambda(s,\sym^mf) = N^{ms/2}\gamma(s,\sym^mf)L(s,\sym^mf)
\]
is entire, and it satisfies the functional equation
\[
    \Lambda(s,\sym^mf) = \epsilon_{\sym^mf}\Lambda(1-s,\sym^mf),
\]
where
$\epsilon_{\sym^mf} = \pm 1$.
We denote
\[
    - \frac{L'}{L}(s,\sym^mf) = \sum_{n=1}^{\infty} \frac{\Lambda_{\sym^mf}(n)\Lambda(n)}{n^s},\qquad\Re(s) > 1,
\]
and as per \eqref{EPofL},
\begin{equation*}\label{Lambda_f(n)}
\Lambda_{\sym^mf}(n) = 
\begin{cases}
    \sum_{j = 0}^m \alpha_{1}(p)^{(m-j)l}\alpha_2(p)^{jl} &\text{ if }n = p^l,\,l \geq 1\\
    0&\text{ otherwise.}
\end{cases}
\end{equation*}

Using the Hadamard factorization of the completed $L$-function $\Lambda(s,\sym^mf)$, we have the following analogue of \eqref{classical-Hadamard} for $L(s,\sym^mf)$ (see \cite[Theorem 5.6]{IK}):
\begin{equation}\label{Hadamard-Symf}
\Re\left(-\frac{L'}{L}\left(s, \sym^mf\right)\right) = \frac{m}{2}\log N + \Re\left(\frac{\gamma'}{\gamma}\left(s, \sym^mf\right)\right) -   \Re\left(\sum_{\substack{\rho: \\ L(\rho, \sym^mf) = 0 \\ 0 \leq \Re(\rho) \leq 1}}\frac{1}{s- \rho}\right).
\end{equation}

\subsubsection{Logarithmic derivatives of certain products of $L$-functions}\label{sec: log-der-lf}

The argument to obtain zero-free regions for $L(s,f)$  requires us to consider a quartic trigonometric polynomial (see Section \ref{sec:positivity}) of the form:
\begin{align*}P_4(\theta;n)=8(a+\Lambda_f(n)\cos\theta)^2(b+\Lambda_f(n)\cos\theta)^2,\end{align*}
for appropriate constants $a$ and $b$. The following lemma provides a relationship between the terms $\Lambda_f(n)^m$ for $0 \leq m \leq 4$ (which appear as we expand $P_4(\theta;n)$) and the Dirichlet coefficients of the logarithmic derivatives of the functions 
\begin{align}\label{eqn:def-power-l-fns}L_{(0)}(s,f) & = \zeta(s),\nonumber\\
L_{(1)}(s,f) &= L(s,f),\nonumber\\
L_{(2)}(s,f)&= L(s,\chi_0) L(s,\sym^2 f),\\
L_{(3)}(s,f) &= L(s,\sym^3 f)L(s,f)^2\prod_{p\mid N} \left(1 - \frac{\alpha_1(p)}{p^s}\right)^2,\nonumber\\
L_{(4)}(s,f) &= L(s,\chi_0)^2 L(s,\sym^2 f)^3 L(s,\sym^4 f) \prod_{p\mid N} \left(1 - \frac{\alpha_1(p)^2}{p^s}\right)^3\nonumber.\end{align}
\begin{lem}\label{L_q-log-der}
For each $0 \leq m \leq 4$, we have
\begin{equation*}\label{log-der-L_q} - \frac{L'_{(m)}}{L_{(m)}}(s,f) = \sum_{n=1}^{\infty}\frac{\Lambda(n)\Lambda_{f,m}(n)} {n^s},\qquad \Re(s) > 1,
\end{equation*}
where $$ \Lambda_{f,m}(n) = \Lambda_f(n)^m.$$
\end{lem}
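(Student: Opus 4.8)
The statement is an identity of Dirichlet series, so the plan is to verify it prime by prime via the Euler product, and then read off the coefficient of $p^{-ls}$ on each side. Concretely, for each $0 \le m \le 4$ I would show that
\[
-\frac{L'_{(m)}}{L_{(m)}}(s,f) = \sum_{p}\sum_{l=1}^{\infty} \frac{\Lambda_f(p^l)^m \log p}{p^{ls}},
\]
which, after recalling that $\Lambda(n)=\log p$ exactly when $n=p^l$ and $\Lambda_f(p^l)=\alpha_1(p)^l+\alpha_2(p)^l$ (so that $\Lambda_{f,m}(p^l)=\Lambda_f(p^l)^m=(\alpha_1(p)^l+\alpha_2(p)^l)^m$), is precisely the claimed formula. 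The key observation is that each $L_{(m)}(s,f)$ has been engineered so that its local Euler factor at $p$ is a finite product of terms $(1-\beta/p^s)^{-1}$, and $-\tfrac{d}{ds}\log(1-\beta p^{-s}) = \sum_{l\ge1}\beta^l (\log p) p^{-ls}$; hence the coefficient of $p^{-ls}\log p$ in $-L'_{(m)}/L_{(m)}$ is the sum of the $l$-th powers of the inverse roots of that local factor.

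So the real content reduces to a purely local, finite computation: for each $m$ and each prime $p$, identify the multiset of inverse roots $\{\beta_i\}$ of the local factor of $L_{(m)}(s,f)$ and check that $\sum_i \beta_i^l = (\alpha_1(p)^l + \alpha_2(p)^l)^m$ for all $l \ge 1$. I would split into the two cases $p \nmid N$ and $p \mid N$. For $p \nmid N$, write $\alpha = \alpha_1(p)$, $\alpha_2(p)=\alpha^{-1}$; then $L(s,\sym^j f)$ contributes inverse roots $\alpha^{j},\alpha^{j-2},\dots,\alpha^{-j}$, and $\zeta(s)=L(s,\chi_0)$ (since $p\nmid N$) contributes the single inverse root $1$. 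The identity to check is then the classical $\mathrm{SU}(2)$ character identity: expanding $(\alpha^l+\alpha^{-l})^m$ by the binomial theorem and regrouping into symmetric-power characters $\chi_j(\alpha^l) = \alpha^{jl}+\alpha^{(j-2)l}+\dots+\alpha^{-jl}$ must reproduce exactly the combination of factors defining $L_{(m)}$ — e.g. $(\alpha^l+\alpha^{-l})^2 = \chi_2(\alpha^l) + \chi_0(\alpha^l)$ matches $L(s,\chi_0)L(s,\sym^2 f)$, and similarly $(\alpha^l+\alpha^{-l})^3 = \chi_3(\alpha^l) + 2\chi_1(\alpha^l)$, $(\alpha^l+\alpha^{-l})^4 = \chi_4(\alpha^l) + 3\chi_2(\alpha^l) + 2\chi_0(\alpha^l)$ match $L_{(3)}$ and $L_{(4)}$ respectively. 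This is the step where the specific coefficients $1,2$ and $2,3,1$ and the correction factors $\prod_{p\mid N}(\cdots)$ are pinned down.

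For $p \mid N$ we have $\alpha_2(p)=0$, so $\Lambda_f(p^l)^m = \alpha_1(p)^{ml}$, and one must check that the local factor of $L_{(m)}(s,f)$ at such $p$ has inverse-root multiset summing to $\alpha_1(p)^{ml}$ in $l$-th powers, i.e. is just $(1-\alpha_1(p)^m p^{-s})^{-1}$. Here the correction products $\prod_{p\mid N}(1-\alpha_1(p)/p^s)^2$ and $\prod_{p\mid N}(1-\alpha_1(p)^2/p^s)^3$ in the definitions of $L_{(3)}$ and $L_{(4)}$ do their job: I would use \eqref{EPofL} to write down the local factor of $L(s,\sym^m f)$ at $p\mid N$ (for $\alpha_2=0$ only the $j=0$ term survives, giving $(1-\alpha_1(p)^m p^{-s})^{-1}$), note $L(s,\chi_0)$ has trivial factor at $p\mid N$, and then combine exponents and cancel against the correction factors. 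The main obstacle — really the only non-mechanical part — is bookkeeping the combinatorics in the $p\nmid N$ case correctly for $m=3,4$: verifying that the binomial expansion of $(\alpha^l+\alpha^{-l})^m$ decomposes into the intended multiplicities of symmetric-power characters. This is a short Clebsch–Gordan / Chebyshev-polynomial identity that can be checked directly, and once it is in hand the rest is routine comparison of Euler factors.
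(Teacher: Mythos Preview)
Your proposal is correct and follows essentially the same approach as the paper: both split into the cases $p\nmid N$ and $p\mid N$, and in each case verify directly that the coefficient of $p^{-ls}\log p$ in $-L'_{(m)}/L_{(m)}$ equals $(\alpha_1(p)^l+\alpha_2(p)^l)^m$. Your framing via $\mathrm{SU}(2)$ character identities is just a tidy way of packaging the binomial expansions the paper writes out explicitly for $m=2,3,4$.
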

\begin{proof}
    The lemma is immediate for $m = 0$ and $m = 1$.  
 Observe that
\[
    \Lambda_f(n) = 
    \begin{cases}
        \alpha_1(p)^l + \alpha_2(p)^l&\text{ if }n = p^l,\,l \geq 1,\\
        0&\text{ otherwise.}
    \end{cases}
\]
Here, we recall that $\alpha_1(p)\alpha_2(p) = 1$ for $(p,N) = 1$, and $\alpha_2(p) = 0$ for $p \mid N$. Also, note that
\[
    -\frac{L'}{L}(s,\chi_0) = \sum_{n=1}^{\infty} \frac{\Lambda(n)\chi_0(n)}{n^s},\qquad \Re(s) > 1.
\]
For $m=2$, we have
\begin{equation*}
\begin{split}
\Lambda_{f,2}(n) 
&=  \begin{cases}
    \alpha_1(p)^{2l} + 2\alpha_1(p)^l\alpha_2(p)^l + \alpha_2(p)^{2l}&\text{ if }n = p^l,\,l \geq 1,\,(p,N) = 1\\
\alpha_1(p)^{2l} &\text{ if }n = p^l,\,l \geq 1,\,(p,N) > 1.\\
\end{cases}\\
\end{split}
\end{equation*}
For $m = 3$, we can write
\begin{equation}\label{RSL_3-modified}
L_{(3)}\left(s,f \right) = L(s,\sym^3 f)\prod_{p \nmid N}\left(1 - \frac{\alpha_1(p)}{p^s}\right)^{-2}\left(1 - \frac{\alpha_2(p)}{p^s}\right)^{-2}.
\end{equation}
For $n = p^l,\,l \geq 1$ and $(p,N) = 1$, we use \eqref{eqn:def-power-l-fns} to deduce that
\begin{equation*}
\begin{split}
 &\Lambda_{f,3}(n) = \Lambda_{\sym^3 f}(n) + 2 \Lambda_f(n) \\
& = \Lambda_{\sym^3}(p^l) + 2\Lambda_f(p^l)\\
&=\alpha_1(p)^{3l} + \alpha_1(p)^{2l}\alpha_2(p)^l + \alpha_1(p)^{l}\alpha_2(p)^{2l} + \alpha_2(p)^{3l} + 2\left(\alpha_1(p)^l + \alpha_2(p)^l\right)\\
&= \alpha_1(p)^{3l} + 3\alpha_1(p)^{2l}\alpha_2(p)^l + 3\alpha_1(p)^{l}\alpha_2(p)^{2l} + \alpha_2(p)^{3l} \\
&= (\alpha_1(p)^l + \alpha_2(p)^l)^3\\
&=\Lambda_f(n)^3.\\
\end{split}
\end{equation*}
On the other hand, for $n = p^l,\,l \geq 1$ and $p \mid N$, we use \eqref{RSL_3-modified}. Thus, we have,
\[
    \Lambda_{f,3}(n) = \Lambda_{\sym^3 f}(n) = \alpha_1(p)^{3l} \\ = \Lambda_f(n)^3.
\]
Here, the equality $\Lambda_{\sym^3 f}(n) = \alpha_1(p)^{3l}$ follows from the fact that $\alpha_2(p) = 0$.
Thus,
\begin{equation*}\label{RSL_3-log-der-coeff}
\Lambda_{f,3}(n) = 
 \Lambda_f(n)^3
\end{equation*} for all $n$.
For $m=4$, we have
\begin{equation}
\label{L4-modified}
L_{(4)}\left(s,f \right)= L(s,\chi_0)^2 L(s,\sym^4 f) \prod_{p \nmid N} \left[\left(1 - \frac{\alpha_1(p)^2}{p^s}\right)\left(1 - \frac{\alpha_1(p)\alpha_2(p)}{p^s}\right)\left(1 - \frac{\alpha_2(p)^2}{p^s}\right)\right]^{-3}.
\end{equation}

Using \eqref{eqn:def-power-l-fns} and \eqref{L4-modified}, we have, for $n = p^l,\,l \geq 1$,
\begin{equation*}
    \begin{split}
        &\Lambda_{f,4}(p^l) = 2\chi_0(p^l) + \Lambda_{\sym^4f}(p^l) + \begin{cases}
        3\Lambda_{\sym^2f}(p^l) &\text{ if }(p,N) = 1\\
            0 &\text{ if }(p,N) > 1.
        \end{cases} \\
    \end{split}
\end{equation*}
If $(p,N) = 1$, then
\begin{equation*}
    \begin{split}
  & \Lambda_{f,4}(p^l) = 2 + 3(\alpha_1(p)^{2l} + \alpha_1(p)^l\alpha_2(p)^l + \alpha_2(p)^{2l})\\
  & + \alpha_1(p)^{4l} + \alpha_1(p)^{3l} \alpha_2(p)^{l} + \alpha_1(p)^{2l} \alpha_2(p)^{2l} + \alpha_1(p)^{l}\alpha_2(p)^{3l} + \alpha_2(p)^{4l}\\
  &= 6 + 4(\alpha_1(p)^{2l} + \alpha_2(p)^{2l}) + \alpha_1(p)^{4l} + \alpha_2(p)^{4l}\\
  &= (\alpha_1(p)^l + \alpha_2(p)^l)^4
\end{split}
\end{equation*}
If $(p,N) > 1$, then 
\begin{equation*}
  \Lambda_{f,4}(p^l) = \Lambda_{\sym^4 f}(p^l) = \alpha_1(p)^{4l}, 
  \end{equation*}
  since $\alpha_2(p) = 0.$
Thus,
\begin{equation*}\label{RSL-4-log-der-coeff}
\Lambda_{f,4}(n) = 
 \Lambda_f(n)^4.
\end{equation*}
\end{proof}

\section{Notation and Preliminary Lemmas}\label{sec: Notation and Preliminary Lemmas}

We fix some notation that will be used throughout the article.  We set $\kappa=1-\frac{1}{\sqrt{5}}$ and $\phi=\frac{1+\sqrt{5}}{2}$. For a given $\sigma>1$, we set $\sigma_1=\frac12+\frac12\sqrt{1+4\sigma^2}$. Throughout this paper, we assume that $\sigma\in(1,\phi)$ except in some explicitly indicated instances. Observe that 
\begin{equation*}
\sigma<\phi<\sigma_1<\frac12+\frac12\sqrt{1+4\phi^2}<2.2.
\end{equation*}
 For $t\in\mathbb{R}$, we shall use $s$ to denote the complex number $\sigma+it$ and $s_1$ to denote the complex number $\sigma_1+it$. We define
\begin{equation*}
F(s,z)=\Re\left(\frac{1}{s-z}+\frac{1}{s-1+\overline{z}}\right).
\end{equation*}
The following lemmas are at the heart of Ste\v{c}kin's trick and will be used extensively in this work. 
\begin{lem}\label{lem:Stechkin}
Let $t\in\mathbb{R}$, $s=\sigma+it$ and $s_1=\sigma_1+it$.
\begin{enumerate}
\item[{(a)}] If $0 <\Re(z) < 1$, we have
\begin{equation*}
F(s,z)-\frac{1}{\sqrt{5}}F(s_1,z)\geq 0.
\end{equation*}
\item[{(b)}] If $\frac12\leq \Re(z)<1$ and $\Im(z)=t$, then
\begin{equation*}
\Re\left(\frac{1}{s-1+\overline{z}}\right)-\frac{1}{\sqrt{5}}F(s_1,z)\geq 0.
\end{equation*}
\end{enumerate}
\end{lem}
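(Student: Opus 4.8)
The plan is to reduce everything to elementary inequalities about the function $g(x) = \Re\!\left(\frac{1}{s-z}\right)$ viewed as a function of $\sigma = \Re(s)$, exploiting the specific relation $\sigma_1 = \tfrac12 + \tfrac12\sqrt{1+4\sigma^2}$, which is precisely designed so that $\sigma_1(\sigma_1 - 1) = \sigma^2$. First I would write $z = \beta + i\gamma$ with $0 < \beta < 1$ and set $u = \sigma - \beta$, $u_1 = \sigma_1 - \beta$, $v = t - \gamma$; then
\[
\Re\!\left(\frac{1}{s-z}\right) = \frac{u}{u^2+v^2}, \qquad
\Re\!\left(\frac{1}{s_1-z}\right) = \frac{u_1}{u_1^2+v^2}.
\]
Similarly the ``reflected'' term uses $1 - \beta$ in place of $\beta$: with $w = \sigma - 1 + \beta$, $w_1 = \sigma_1 - 1 + \beta$ one gets $\Re\!\left(\frac{1}{s-1+\overline z}\right) = \frac{w}{w^2+v^2}$ and likewise for $s_1$. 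So $F(s,z) - \frac{1}{\sqrt5}F(s_1,z)$ is a sum of two terms each of the shape $h(a,a_1) := \frac{a}{a^2+v^2} - \frac{1}{\sqrt5}\frac{a_1}{a_1^2+v^2}$, and the whole lemma comes down to showing $h \ge 0$ for the relevant pairs $(a,a_1)$.

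The key algebraic fact I would isolate is a lower bound of the form $\frac{a}{a^2+v^2} \ge \lambda\,\frac{a_1}{a_1^2+v^2}$ valid for all real $v$, where $\lambda$ depends only on the pair $(a,a_1)$. Clearing denominators (legitimate since $a, a_1 > 0$ in the cases at hand), this is equivalent to $a(a_1^2 + v^2) \ge \lambda a_1(a^2 + v^2)$, i.e.\ $(a - \lambda a_1)v^2 \ge a_1 a(\lambda a - a_1)$ — wait, more carefully: $a a_1^2 - \lambda a_1 a^2 \ge (\lambda a_1 - a)v^2$. If $a \ge \lambda a_1$ the right side is $\le 0$ while the left side is $a a_1(a_1 - \lambda a)$; so it suffices to have simultaneously $a \ge \lambda a_1$ and $a_1 \ge \lambda a$... that forces $\lambda \le 1$ and is too weak. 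The better route: the function $v \mapsto \frac{a}{a^2+v^2}\big/\frac{a_1}{a_1^2+v^2} = \frac{a(a_1^2+v^2)}{a_1(a^2+v^2)}$ is monotone in $v^2$, so its infimum over $v \in \mathbb R$ is attained at $v = 0$ (if $a < a_1$) or as $v \to \infty$ (if $a > a_1$), giving infimum $\min\!\left(\frac{a_1}{a_1}\cdot\frac{a}{a}, \text{ or }\right)$ — concretely the infimum equals $\min\left(\frac{a a_1^2}{a_1 a^2}, \frac{a}{a_1}\right) = \min\left(\frac{a_1}{a}, \frac{a}{a_1}\right)$, hmm that's still not matching $1/\sqrt5$. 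Let me instead recall the actual mechanism: one should bound $\frac{a}{a^2+v^2} - \frac{1}{\sqrt5}\frac{a_1}{a_1^2+v^2} \ge \frac{1}{a^2+v^2}\left(a - \frac{1}{\sqrt5}a_1\right)$ using $a_1 > a$ hence $a_1^2 + v^2 > a^2+v^2$, so it suffices that $a \ge \frac{a_1}{\sqrt5}$, i.e.\ $\sqrt5\,a \ge a_1$. This is where $\sigma_1$ enters: for part (a) the two relevant values of $a$ are $u = \sigma - \beta$ and $w = \sigma - 1 + \beta$; I would verify $\sqrt5(\sigma - \beta) \ge \sigma_1 - \beta$ and $\sqrt5(\sigma - 1 + \beta) \ge \sigma_1 - 1 + \beta$ for all $\beta \in (0,1)$ using $\sigma_1 < \phi\sigma$ (since $\sigma_1/\sigma = \tfrac{1}{2\sigma} + \tfrac12\sqrt{\tfrac{1}{\sigma^2}+4} < \phi$ as $\sigma > 1$) together with $\phi < \sqrt5$ — actually $\phi = \tfrac{1+\sqrt5}{2} \approx 1.618 < \sqrt5 \approx 2.236$, and one checks the worst case $\beta \to 0$ gives $\sqrt5\sigma \ge \sigma_1$, $\beta \to 1$ gives $\sqrt5(\sigma - 1) \ge \sigma_1 - 1$; the latter needs $\sigma_1 - 1 \le \sqrt5(\sigma-1)$, which I would verify directly from $(\sigma_1-1)\sigma_1 = \sigma^2$ hence $\sigma_1 - 1 = \sigma^2/\sigma_1 < \sigma^2/\sigma = \sigma < \sqrt5(\sigma-1)$ for $\sigma$ in the allowed range — actually $\sigma < \sqrt5(\sigma-1) \iff \sigma(\sqrt5-1) > \sqrt5 \iff \sigma > \sqrt5/(\sqrt5-1) \approx 1.81$, which fails near $\sigma=1$! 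So the naive bound is insufficient and one genuinely needs to keep the $v$-dependence or use the exact identity $\sigma_1(\sigma_1-1)=\sigma^2$.

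Here is the route I would actually commit to. Use the identity $\sigma_1(\sigma_1 - 1) = \sigma^2$, equivalently $\sigma_1^2 = \sigma^2 + \sigma_1$, to compare $u_1^2 + v^2$ with $5(u^2+v^2)$ and with $5(w^2 + v^2)$ directly after cross-multiplying; the claim $\frac{u}{u^2+v^2} \ge \frac{1}{\sqrt5}\frac{u_1}{u_1^2+v^2}$ becomes, after squaring and clearing (all quantities positive), $5u^2(u_1^2+v^2)^2 \ge u_1^2(u^2+v^2)^2$, and expanding in powers of $v^2$ this is a quadratic in $X = v^2$ with leading coefficient $5u^2 - u_1^2 \ge 0$ (needs checking: $\sqrt5 u \ge u_1$, i.e.\ $\sqrt5(\sigma-\beta) \ge \sigma_1 - \beta$) — so I will need the constraint on $\sigma$ already present in the hypothesis $\sigma \in (1,\phi)$ to make these sign conditions hold, and then show the constant term and discriminant also have the right signs; the identity $\sigma_1^2 = \sigma^2 + \sigma_1$ is exactly what makes the bookkeeping close. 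For part (b), the only change is that one of the two terms, $\Re\!\left(\frac{1}{s-z}\right)$, is dropped (it is $\ge 0$ since $\Re(z) < \sigma$ is not guaranteed — actually $\Re(z) \in [\tfrac12,1)$ and $\sigma > 1$ so $\Re(s-z) > 0$, confirming $\Re\frac{1}{s-z} \ge 0$), so one needs the sharper inequality $\Re\!\left(\frac{1}{s-1+\overline z}\right) \ge \frac{1}{\sqrt5}F(s_1,z)$, where now the assumption $\Re(z) \ge \tfrac12$ is used to ensure $w = \sigma - 1 + \beta \ge \sigma - \tfrac12 > \tfrac12$ and — crucially — $w$ is large enough relative to both $u_1$ and $w_1$; I would split $F(s_1,z) = \Re\frac{1}{s_1-z} + \Re\frac{1}{s_1-1+\bar z}$ and bound each of $\frac{w}{w^2+v^2} \ge \frac{2}{\sqrt5}\cdot\frac12\left(\frac{u_1}{u_1^2+v^2} + \frac{w_1}{w_1^2+v^2}\right)$ by the same cross-multiplication trick, using $\beta \ge \tfrac12$ to control $u_1 = \sigma_1 - \beta \le \sigma_1 - \tfrac12$ versus $w = \sigma - 1 + \beta$.

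\textbf{Main obstacle.} The genuine difficulty is purely the case analysis in the parameter $\sigma \in (1,\phi)$: the ``clear denominators and check sign of a quadratic in $v^2$'' strategy produces, for each of parts (a) and (b), a polynomial inequality in $\sigma$ and $\beta$ whose positivity is not term-by-term obvious and which degenerates (the leading-in-$v^2$ coefficient tends to $0$) as $\sigma \to 1^+$; the whole point of choosing $\sigma_1 = \tfrac12 + \tfrac12\sqrt{1+4\sigma^2}$ — hence $\sigma_1^2 - \sigma_1 = \sigma^2$ — is to make this family of inequalities collapse to something tractable, so the heart of the proof is to deploy that identity at exactly the right moments. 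I expect part (b), with its asymmetry forced by $\Re(z) \ge \tfrac12$ and the loss of the $\frac{1}{s-z}$ term, to require the most care; part (a) should follow from the same computation with both terms retained, where the $\Re\frac{1}{s-1+\bar z}$ contribution has slack to spare and the $\Re\frac{1}{s-z}$ contribution is the tight one.
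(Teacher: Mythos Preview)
The paper does not actually prove this lemma: its entire proof is the one-line citation ``This is \cite[Lemma~2]{Stechkin}.'' So there is nothing to compare your argument against in the paper itself; what you have written is an attempted reconstruction of Ste\v{c}kin's original computation.

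Your proposal correctly isolates the decisive algebraic fact, namely that the definition $\sigma_1 = \tfrac12 + \tfrac12\sqrt{1+4\sigma^2}$ is equivalent to the identity $\sigma_1(\sigma_1-1) = \sigma^2$, and that this identity is what makes the cross-multiplied inequalities collapse. That is exactly the mechanism in Ste\v{c}kin's proof. However, what you have written is a plan rather than a proof: you never actually carry out the ``expand in powers of $v^2$ and check the signs'' step, and you explicitly flag the case $\sigma \to 1^+$ as an unresolved obstacle. The meandering passages where you try $\sqrt5 u \ge u_1$ termwise and discover it fails near $\sigma = 1$ are useful scratchwork but should not appear in a final write-up.

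One concrete oversight: in part~(b) you retain the variable $v = t - \Im(z)$, but the hypothesis there is precisely $\Im(z) = t$, so $v = 0$ and all the quantities are real. The inequality in (b) therefore reduces to the purely real statement
\[
\frac{1}{\sigma - 1 + \beta} \;\ge\; \frac{1}{\sqrt5}\left(\frac{1}{\sigma_1 - \beta} + \frac{1}{\sigma_1 - 1 + \beta}\right),
\qquad \tfrac12 \le \beta < 1,
\]
which is considerably easier to handle than the version with a free $v$ you were attempting. With $v=0$ and the identity $\sigma_1(\sigma_1-1)=\sigma^2$, this is a direct (if slightly tedious) algebraic verification; your outline does not exploit this.

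In short: right idea, right key identity, but the proposal stops at the point where the actual work begins, and part~(b) is made harder than necessary by not using $\Im(z)=t$.
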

\begin{proof}
This is  \cite[Lemma 2]{Stechkin}.
\end{proof}

\begin{lem}\label{lem:eq22mccurley}
Let $\frac{1}{2} \leq \Re(z) \leq 1$ and $\abs{\Im z} < 1$. 
Then
\[
\Re\frac{1}{\sigma-1+\overline{z}} -\frac{1}{\sqrt{5}}F(\sigma_1,z) \geq -0.6.
\]
\end{lem}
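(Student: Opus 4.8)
The plan is to reduce Lemma~\ref{lem:eq22mccurley} to an explicit two-variable minimization and then bound each piece separately, following the pattern of Lemma~\ref{lem:Stechkin}(b) but being careful about the regime $|\Im z| < 1$ where that lemma no longer applies. Write $z = \beta + i\tau$ with $\tfrac12 \le \beta \le 1$ and $|\tau| < 1$, and recall $s = \sigma + it$, $s_1 = \sigma_1 + it$ with $1 < \sigma < \phi$ and $\sigma_1 = \tfrac12 + \tfrac12\sqrt{1+4\sigma^2}$. The quantity to bound is
\[
D := \Re\frac{1}{\sigma - 1 + \overline{z}} - \frac{1}{\sqrt 5}\,\Re\!\left(\frac{1}{s_1 - z} + \frac{1}{s_1 - 1 + \overline{z}}\right).
\]
First I would observe that the first term $\Re\frac{1}{\sigma-1+\overline z} = \frac{\sigma-1+\beta}{(\sigma-1+\beta)^2 + \tau^2}$ is nonnegative (since $\sigma>1$ and $\beta\ge\tfrac12$), so the whole difficulty is that the subtracted term $\frac{1}{\sqrt5}F(s_1,z)$ could be positive and large; I need a \emph{lower} bound on $D$, equivalently an \emph{upper} bound on $\frac{1}{\sqrt5}F(s_1,z)$ together with the harmless lower bound $0$ on the first term (or a sharper bound if needed).

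Next I would handle $F(s_1, z)$ directly. Since $\Re(s_1 - z) = \sigma_1 - \beta$ and $\Im(s_1 - z) = t - \tau$, while $\Re(s_1 - 1 + \overline z) = \sigma_1 - 1 + \beta$ and $\Im(s_1 - 1 + \overline z) = t - \tau$ as well, we get
\[
F(s_1,z) = \frac{\sigma_1 - \beta}{(\sigma_1-\beta)^2 + (t-\tau)^2} + \frac{\sigma_1 - 1 + \beta}{(\sigma_1 - 1 + \beta)^2 + (t-\tau)^2}.
\]
Both numerators are positive because $\sigma_1 < 2.2$ forces $\sigma_1 - \beta > 0$, and $\sigma_1 - 1 + \beta > 0$ trivially; each summand is a function of the form $\frac{a}{a^2 + u^2}$ which is maximized over $u \in \mathbb{R}$ at $u = 0$, giving $\frac{1}{a}$. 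Hence $F(s_1,z) \le \frac{1}{\sigma_1 - \beta} + \frac{1}{\sigma_1 - 1 + \beta}$, and therefore
\[
D \ge -\frac{1}{\sqrt 5}\left(\frac{1}{\sigma_1 - \beta} + \frac{1}{\sigma_1 - 1 + \beta}\right).
\]
It now suffices to maximize $g(\sigma,\beta) := \frac{1}{\sigma_1 - \beta} + \frac{1}{\sigma_1 - 1 + \beta}$ over $\tfrac12 \le \beta \le 1$ and $1 < \sigma < \phi$. For fixed $\sigma$ (hence fixed $\sigma_1$), $g$ is convex in $\beta$ on the relevant interval, so its maximum over $\beta\in[\tfrac12,1]$ occurs at an endpoint; since $\sigma_1 > 1 > \tfrac12$ the two endpoints $\beta = \tfrac12$ and $\beta = 1$ give the same value by the symmetry $\beta \mapsto \tfrac32 - \beta$ interchanging the two terms when $\beta\in\{\tfrac12,1\}$ is checked — in fact at $\beta=1$: $g = \frac{1}{\sigma_1-1} + \frac{1}{\sigma_1}$, and one checks this dominates the value at $\beta=\tfrac12$. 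Then I maximize over $\sigma\in(1,\phi)$: as $\sigma\downarrow 1$, $\sigma_1 \downarrow \tfrac12 + \tfrac12\sqrt 5 = \phi$, and $\sigma_1$ is increasing in $\sigma$, so $g$ at $\beta=1$ is largest in the limit $\sigma\to 1^+$, giving $\frac{1}{\phi - 1} + \frac{1}{\phi} = \phi + (\phi-1) = 2\phi - 1 = \sqrt 5$. Thus $D \ge -\frac{1}{\sqrt5}\cdot\sqrt 5 = -1$, which is weaker than the claimed $-0.6$.

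So the crude bound above is not enough, and the real work — the main obstacle — is to do better by \emph{not} discarding the positive term $\Re\frac{1}{\sigma - 1 + \overline z}$ and by \emph{not} taking $t = \tau$ in $F(s_1,z)$; the saving must come from the interplay between these. The refined plan is: treat the combined function
\[
h(\sigma, \beta, \tau, t) = \frac{\sigma-1+\beta}{(\sigma-1+\beta)^2 + \tau^2} - \frac{1}{\sqrt5}\left(\frac{\sigma_1 - \beta}{(\sigma_1-\beta)^2 + (t-\tau)^2} + \frac{\sigma_1 - 1 + \beta}{(\sigma_1-1+\beta)^2 + (t-\tau)^2}\right)
\]
and minimize over the box $\tfrac12\le\beta\le1$, $|\tau|<1$, $t\in\mathbb R$, $1<\sigma<\phi$. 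Minimizing in $t$: the subtracted bracket is a sum of two positive "Poisson-type" bumps in the variable $u := t - \tau$, each decreasing in $|u|$, so it is maximized (worst case for us) at $u = 0$, i.e. $t = \tau$; after this substitution $h$ becomes a function of $(\sigma,\beta,\tau)$ only. At this point I expect the honest route is to verify numerically/analytically — exactly as McCurley does for the Dirichlet case — that over the compact region $\tfrac12\le\beta\le1$, $0\le\tau\le1$, $1\le\sigma\le\phi$ (closure, using continuity and monotone limits at the open boundary) the minimum of the resulting three-variable function is $\ge -0.6$; the bound $-0.6$ is presumably just a convenient rational upper estimate of the true extremum, so I would locate the critical point (likely $\tau$ not $0$ now, balancing the gain in the first term against the loss in the bracket) via $\partial_\tau h = 0$ and $\partial_\sigma h = 0$, check the $\beta$-dependence is again extremal at an endpoint, and finish with an explicit numerical evaluation, rounding conservatively. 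The delicate point to get right is which $\tau$ is worst: the first term $\frac{\sigma-1+\beta}{(\sigma-1+\beta)^2+\tau^2}$ decreases in $|\tau|$ while (after setting $t=\tau$) the subtracted bracket is independent of $\tau$, so actually the worst case is $|\tau| \to 1$ or $\tau$ as large as allowed — I must track this carefully against the open constraint $|\tau|<1$, taking the infimum rather than minimum there, which still yields the stated inequality with the non-strict sign because $-0.6$ has slack.
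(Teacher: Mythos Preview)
Your proposal has a genuine gap: you correctly obtain the bound $F(\sigma_1,z)\le\sqrt 5$ (this is exactly the observation the paper opens with), but you pair it with the trivial estimate $\Re\frac{1}{\sigma-1+\overline z}\ge 0$, getting only $D\ge -1$, and then you retreat to a proposed three-variable numerical minimization that you do not carry out. The missing idea is much simpler than that optimization. The hypothesis $|\Im z|<1$ is precisely what upgrades the trivial bound on the first term: writing $x:=\sigma-1+\Re(z)\in(\tfrac12,\phi)$ and $\tau:=\Im(z)$, one has
\[
\Re\frac{1}{\sigma-1+\overline z}=\frac{x}{x^2+\tau^2}>\frac{x}{x^2+1},
\]
and since $x/(x^2+1)$ is concave on $(\tfrac12,\phi)$ its minimum over that interval is at an endpoint, namely $\frac{1/2}{1/4+1}=0.4$. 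Combining $\Re\frac{1}{\sigma-1+\overline z}\ge 0.4$ with your own $\frac{1}{\sqrt5}F(\sigma_1,z)<1$ gives $D\ge -0.6$ immediately. This is the paper's proof in full.

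There is also a misreading of the statement that sends your final paragraph off course. In this lemma the arguments are $\sigma$ and $\sigma_1$, not $s=\sigma+it$ and $s_1=\sigma_1+it$; there is no free parameter $t$ here (equivalently, $t=0$). So your variable $u=t-\tau$ is just $-\tau$, your ``set $t=\tau$'' step is not available, and the subtracted bracket \emph{does} depend on $\tau$ --- in fact it decreases in $|\tau|$, pulling against the first term. The paper sidesteps this tension by bounding the two pieces separately (worst $\tau$ for each), which still lands on $-0.6$.
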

\begin{proof}
By observing that $F(\sigma_1, z) < \sqrt{5}$,
it suffices to verify that 
\begin{equation}\label{eqn:upper-bound}
-\Re\left(\frac{1}{\sigma-1+\overline{z}}\right) \leq -0.4.
\end{equation}
To get this bound, we remark that 
\[
-\Re\left(\frac{1}{\sigma-1+\overline{z}}\right)=-\frac{\sigma-1+\Re(z)}{(\sigma-1+\Re(z))^2+\Im(z)^2}<-\frac{\sigma-1+\Re(z)}{(\sigma-1+\Re(z))^2+1}.
\]
Since $\frac12<\sigma-1+\Re(z)<\phi$ and the function
$\frac{x}{x^2+1}$
is concave down in $(\frac12,\phi)$,  we have the lower bound $\frac{\sigma-1+\Re(z)}{(\sigma-1+\Re(z))^2 + 1} \geq 0.4$, and \eqref{eqn:upper-bound} follows.  

\end{proof}

\begin{lem}\label{prop:stechkin with 3t}
    Let $\frac12\leq\Re(z) < 1$ and $\Im(s) = 3 \Im(z)$. 
    Then 
    \[
        \Re\left(\frac{1}{s-1+\overline{z}}\right)-\frac{1}{\sqrt{5}}F(s_1,z) \geq -\frac{\phi}{\sqrt{5}} - \frac{2}{5} \geq -1.12361.
    \]
\end{lem}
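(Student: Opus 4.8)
The plan is to parametrize $z = \beta + i\gamma$ with $\tfrac12 \le \beta < 1$ and $\gamma = \Im(z) \in \mathbb{R}$. The hypothesis $\Im(s) = 3\Im(z)$ then reads $t = 3\gamma$, so that $t - \gamma = 2\gamma$ and all three relevant complex numbers acquire the \emph{same} imaginary part:
\[
s - 1 + \overline{z} = (\sigma - 1 + \beta) + 2i\gamma, \qquad s_1 - z = (\sigma_1 - \beta) + 2i\gamma, \qquad s_1 - 1 + \overline{z} = (\sigma_1 - 1 + \beta) + 2i\gamma.
\]
Since $\sigma > 1$ gives $\sigma - 1 + \beta > \beta \ge \tfrac12 > 0$, the term $\Re\big(\tfrac{1}{s-1+\overline z}\big) = \tfrac{\sigma-1+\beta}{(\sigma-1+\beta)^2 + 4\gamma^2}$ is strictly positive, so it may simply be discarded; it therefore suffices to show $F(s_1,z) < \phi + \tfrac{2}{\sqrt5}$, which after multiplying by $-1/\sqrt5$ is exactly the claimed bound.

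For this I would use the trivial estimate $\Re\big(\tfrac{1}{x + 2i\gamma}\big) = \tfrac{x}{x^2+4\gamma^2} \le \tfrac1x$ for $x > 0$ (i.e.\ $x^2 + 4\gamma^2 \ge x^2$), applied to each of the two summands of $F(s_1,z)$; note both $\sigma_1 - \beta$ and $\sigma_1 - 1 + \beta$ are positive since $\sigma_1 > \phi > 1 > \beta$ and $\sigma_1 - 1 + \beta \ge \sigma_1 - \tfrac12 > 0$. This gives
\[
F(s_1, z) \le \frac{1}{\sigma_1 - \beta} + \frac{1}{\sigma_1 - 1 + \beta}.
\]
Now $\sigma > 1$ forces $\sigma_1 = \tfrac12 + \tfrac12\sqrt{1 + 4\sigma^2} > \tfrac12 + \tfrac12\sqrt5 = \phi$, whence $\sigma_1 - \beta > \sigma_1 - 1 > \phi - 1 = \phi^{-1}$ and $\sigma_1 - 1 + \beta \ge \sigma_1 - \tfrac12 > \phi - \tfrac12 = \tfrac{\sqrt5}{2}$. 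Consequently $\tfrac{1}{\sigma_1-\beta} < \phi$ and $\tfrac{1}{\sigma_1 - 1 + \beta} < \tfrac{2}{\sqrt5}$, so that $F(s_1, z) < \phi + \tfrac{2}{\sqrt5}$ as required.

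Assembling the pieces,
\[
\Re\Big(\frac{1}{s - 1 + \overline{z}}\Big) - \frac{1}{\sqrt5}F(s_1, z) > -\frac{1}{\sqrt5}\Big(\phi + \frac{2}{\sqrt5}\Big) = -\frac{\phi}{\sqrt5} - \frac{2}{5} = -\frac{9 + \sqrt5}{10} = -1.12360\ldots \ge -1.12361,
\]
which proves the lemma. I do not expect a genuine obstacle here: the only point worth flagging is that the normalization $\Im(s) = 3\Im(z)$ collapses the imaginary parts so that the two terms of $F(s_1,z)$ are handled by one uniform estimate, and that the extreme values $\beta \to 1$, $\beta \to \tfrac12$, $\sigma \to 1$ are precisely what make the constant $\phi + \tfrac{2}{\sqrt5}$ best possible for this elementary argument, so essentially no numerical slack is lost.
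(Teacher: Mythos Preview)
Your proof is correct and follows essentially the same approach as the paper: discard the positive term $\Re\bigl(\tfrac{1}{s-1+\overline z}\bigr)$, use $\Re\bigl(\tfrac{1}{x+2i\gamma}\bigr)\le \tfrac1x$ on each summand of $F(s_1,z)$, and then invoke $\sigma_1>\phi$ together with $\beta<1$ and $\beta\ge\tfrac12$ to get the two constants $\phi$ and $\tfrac{2}{\sqrt5}$. The only cosmetic difference is that you bound $F(s_1,z)$ first and then multiply by $-1/\sqrt5$, whereas the paper carries the $1/\sqrt5$ through each term.
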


\begin{proof}
    Observe that
    \[
       \Re\left(\frac{1}{s-1+\overline{z}}\right) = \frac{\sigma - 1 + \Re(z)}{(\sigma - 1 + \Re(z))^2 + 4 \Im(z)^2}>0.
    \]
    We write
    \[
        \frac{1}{\sqrt{5}}F(s_1,z) = \frac{1}{\sqrt{5}} \frac{\sigma_1 - \Re(z)}{(\sigma_1 - \Re(z))^2 + 4 \Im(z)^2} + \frac{1}{\sqrt{5}} \frac{\sigma_1 - 1 + \Re(z)}{(\sigma_1 - 1 + \Re(z))^2 + 4 \Im(z)^2}.
    \]
    Notice that
    \begin{equation*}
       \frac{1}{\sqrt{5}} \frac{\sigma_1 - \Re(z)}{(\sigma_1 - \Re(z))^2 + 4 \Im(z)^2} \leq  \frac{1}{\sqrt{5}} \frac{\sigma_1 - \Re(z)}{(\sigma_1 - \Re(z))^2} = \frac{1}{\sqrt{5}} \frac{1}{\sigma_1 - \Re(z)}\leq \frac{1}{\sqrt{5}} \frac{1}{\phi - 1} = \frac{\phi}{\sqrt{5}},
    \end{equation*}
    where the last inequality follows since $\sigma_1 \geq \phi$ and $\Re(z) < 1$.
   We also have 
    \[
        \frac{1}{\sqrt{5}} \frac{\sigma_1 - 1 + \Re(z)}{(\sigma_1 - 1 + \Re(z))^2 + 4 \Im(z)^2} \leq \frac{1}{\sqrt{5}} \frac{1}{\sigma_1 - 1 + \Re(z)} \leq \frac{1}{\sqrt{5}} \frac{1}{\phi - 1/2} = \frac{2}{5}. 
    \]
Combining all these bounds yields the desired result. \qedhere
\end{proof}

Other expressions of the form  $\Re\left(L(s)-\frac{1}{\sqrt{5}}L(s_1)\right)$ for various functions $L$  will appear prominently throughout the paper, and we will refer to them as Ste\v{c}kin's differences. In what follows, we establish an explicit upper bound for one such expression, namely the Ste\v{c}kin's difference $\Re\left(\frac{\Gamma'}{\Gamma}(s+a)-\frac{1}{\sqrt{5}}\frac{\Gamma'}{\Gamma}(s_1+a)\right)$ with $a\geq0$. This bound will be used in Section \ref{sec: Bounds for Auxiliary Terms}.   

\begin{lem}\label{lem:genLem1&2MC}
Let $t\in\mathbb{R}$, $m \geq 1$, and $a \geq 0$. For $s = \sigma + i m t$ and $s_1 = \sigma_1 + i m t$, we have
\begin{align*}
    &\Re \Bigl( \frac{\Gamma'}{\Gamma}(s + a) - \frac{1}{\sqrt{5}} \frac{\Gamma'}{\Gamma}(s_1 + a) \Bigr)\\
    &\leq \begin{cases}
        \kappa \log(\phi+a)+\dfrac{1}{\sqrt{5}(\phi + a)} & \text{if } m=0 \\
        \dfrac{\kappa}{2} \log((\phi+a)^{2} + m^{2})- \kappa \dfrac{2\phi + 2 a}{(2\phi + 2a)^2 + 4 m^2} +  \dfrac{1}{2(1+a)} +  \dfrac{1}{2\sqrt{5}(\phi + a)} & \text{if } m \neq 0,\ \abs{t} < 1 \\
        \kappa \log(m\abs{t}) + \dfrac{\kappa}{2} \log \Bigl(1+ \Bigl( \dfrac{\phi+a}{mt} \Bigr)^{2} \Bigr) + \dfrac{1}{2(1+a)} +  \dfrac{1}{2\sqrt{5}(\phi + a)} & \text{if } m \neq 0,\ \abs{t} \geq 1.
    \end{cases}
\end{align*}
\end{lem}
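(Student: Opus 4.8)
The plan is to express the Ste\v{c}kin difference in terms of the standard integral/series representation of $\frac{\Gamma'}{\Gamma}$ and estimate the two pieces separately, treating the cases $m=0$ and $m\neq0$ differently at the outset. Recall the digamma identity $\frac{\Gamma'}{\Gamma}(w) = \log w - \frac{1}{2w} - \int_0^\infty \left(\frac{1}{2} - \frac{1}{u} + \frac{1}{e^u-1}\right)e^{-uw}\,du$ valid for $\Re(w)>0$, or equivalently $\frac{\Gamma'}{\Gamma}(w) = \Re\big(\log w\big) - \frac{1}{2}\Re\big(\frac1w\big) + (\text{an explicitly bounded error})$ after taking real parts. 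The key structural observation is that $\Re\log(s+a) = \frac12\log\big((\sigma+a)^2 + m^2t^2\big)$ and that since $1<\sigma<\phi<\sigma_1$, the dominant logarithmic term $\kappa\,\Re\log(s+a)$ is bounded above by $\frac{\kappa}{2}\log((\phi+a)^2+m^2t^2)$, which after factoring out $m|t|$ (when $|t|\geq1$) gives the leading $\kappa\log(m|t|) + \frac{\kappa}{2}\log\big(1+((\phi+a)/mt)^2\big)$ term; when $|t|<1$ one keeps it in the form $\frac{\kappa}{2}\log((\phi+a)^2+m^2)$ and uses $m^2t^2 \le m^2$.

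Next I would handle the $-\frac{1}{\sqrt5}$ copies of the two exact main terms. The term $-\frac{1}{\sqrt5}\Re\log(s_1+a)$ is negative, so for an upper bound one can either discard it or, more sharply, use $\sigma_1 > \phi$ to extract the negative quantity $-\kappa\frac{2\phi+2a}{(2\phi+2a)^2+4m^2}$ that appears in the $m\neq0$, $|t|<1$ bound — this comes from combining $\kappa\,\Re\log(s+a)$ with $-\frac{1}{\sqrt5}\Re\log(s_1+a)$ and using the relation $\sigma_1^2 - \sigma_1 = \sigma^2$, i.e. $\sigma_1(\sigma_1-1)=\sigma^2$, which links the two arguments; alternatively bound $\log|s_1+a| \ge \log|s+a|$ up to the stated correction via concavity of $\log$. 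The half-integer shifts $-\frac12\Re\frac{1}{s+a}$ and $+\frac{1}{2\sqrt5}\Re\frac{1}{s_1+a}$ are controlled by $-\frac12\Re\frac{1}{s+a} \le \frac{1}{2}\cdot 0$ when $m\neq0$ is too crude; instead note $\Re\frac{1}{s+a} = \frac{\sigma+a}{(\sigma+a)^2+m^2t^2} > 0$ is positive so $-\frac12\Re\frac{1}{s+a}<0$ and can be dropped, while $\frac{1}{2\sqrt5}\Re\frac{1}{s_1+a} \le \frac{1}{2\sqrt5}\cdot\frac{1}{\sigma_1+a} \le \frac{1}{2\sqrt5(\phi+a)}$; the term $\frac{1}{2(1+a)}$ must therefore come from bounding the remaining integral error term, using $\sigma>1$ so $\Re(s+a)>1+a$. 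For $m=0$ the shift $-\frac12\cdot\frac{1}{\sigma+a}$ is actually negative and dropped, and the error-integral piece is absorbed, leaving only $\kappa\log(\phi+a) + \frac{1}{\sqrt5(\phi+a)}$, where the second term is $-\frac{1}{\sqrt5}\frac{\Gamma'}{\Gamma}(\sigma_1+a)$'s contribution bounded crudely — actually here one should note $-\frac{1}{\sqrt5}\frac{\Gamma'}{\Gamma}(\sigma_1+a) \le -\frac{1}{\sqrt5}\log(\sigma_1+a) + \frac{1}{2\sqrt5(\sigma_1+a)} + \cdots$ and the positive leftover is at most $\frac{1}{\sqrt5(\phi+a)}$ after combining with the error integral, since $\frac{\Gamma'}{\Gamma}$ is increasing and the integral error term is negative.

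The cleanest route is probably to invoke directly the estimates already established in the literature this paper builds on — McCurley's Lemmas 1 and 2 (whence the macro name \texttt{genLem1\&2MC}) give exactly such bounds for $\Re\big(\frac{\Gamma'}{\Gamma}(s) - \frac{1}{\sqrt5}\frac{\Gamma'}{\Gamma}(s_1)\big)$ — and then perform the trivial modification replacing $s$ by $s+a$, $s_1$ by $s_1+a$, and $t$ by $mt$, tracking how each appearance of $\phi$, $1$, and $t$ picks up the shift $a$ or factor $m$. One verifies that the substitution $\sigma\mapsto\sigma+a$ still satisfies the needed inequalities (the key ones being $\sigma+a>1+a$, $\sigma_1+a>\phi+a$, and $|s+a|^2 = (\sigma+a)^2+m^2t^2 \le (\phi+a)^2+m^2t^2$), and that the concavity/monotonicity facts used by McCurley survive the shift. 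I expect the main obstacle to be bookkeeping: getting the exact constants $\frac{1}{2(1+a)}$ and $\frac{1}{2\sqrt5(\phi+a)}$ to come out rather than slightly weaker ones, which requires carefully choosing which terms to bound by their value at the endpoints $\sigma\to1^+$ or $\sigma_1\to\phi^+$ and which to simply discard for sign reasons; and ensuring the $m\neq0$, $|t|<1$ refinement with the subtracted term $-\kappa\frac{2\phi+2a}{(2\phi+2a)^2+4m^2}$ is genuinely valid (this requires $m^2t^2 \le m^2$ together with the exact relation between $\sigma$ and $\sigma_1$), rather than a weaker version without that gain.
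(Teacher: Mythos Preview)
Your overall approach is the same as the paper's: use the Binet--Stirling representation
\[
\Re\frac{\Gamma'}{\Gamma}(z)=\log|z|-\Re\frac{1}{2z}+\Re\int_0^\infty\frac{x-\lfloor x\rfloor-1/2}{(x+z)^2}\,dx,
\]
bound the integral in absolute value by $\frac{1}{2\Re z}$, and then push $\sigma$ and $\sigma_1$ to the common endpoint $\phi$. Your final suggestion of simply rerunning McCurley's Lemmas 1 and 2 with the shift $s\mapsto s+a$, $t\mapsto mt$ is exactly what the paper does.

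However, you have misassigned two of the pieces, and this is worth correcting because you flagged it as your main worry. The negative term $-\kappa\frac{2\phi+2a}{(2\phi+2a)^2+4m^2}$ in the $m\neq0$, $|t|<1$ case does \emph{not} come from the logarithmic parts or the identity $\sigma_1(\sigma_1-1)=\sigma^2$; it comes from the middle terms $-\Re\frac{1}{2(s+a)}$ and $+\frac{1}{\sqrt5}\Re\frac{1}{2(s_1+a)}$, each of which is bounded (in the appropriate direction) by its value at $\sigma=\sigma_1=\phi$, so their combination yields exactly $-\kappa\Re\frac{1}{2(\phi+a+imt)}$, then $|t|<1$ gives $m^2t^2\le m^2$. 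Conversely, the constants $\frac{1}{2(1+a)}$ and $\frac{1}{2\sqrt5(\phi+a)}$ are \emph{both} integral-error contributions: they are $\frac{1}{2(\sigma+a)}\le\frac{1}{2(1+a)}$ (for the $s+a$ integral) and $\frac{1}{\sqrt5}\cdot\frac{1}{2(\sigma_1+a)}\le\frac{1}{2\sqrt5(\phi+a)}$ (for the $s_1+a$ integral). Once you reattribute these two pieces, the bookkeeping falls out with no further tricks; in particular no concavity argument or use of $\sigma_1^2-\sigma_1=\sigma^2$ is needed anywhere.
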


\begin{proof}
For all $z$ with $\Re(z)>0$, we have 
\begin{equation}\label{eqn:digamma-expansion}
    \Re \frac{\Gamma'}{\Gamma}(z) = \log\abs{z} - \Re \frac{1}{2z} + \Re \int_0^\infty \frac{x - \lfloor x \rfloor - 1/2}{(x + z)^2} \, d x.
\end{equation}
We apply \eqref{eqn:digamma-expansion} with $z = s + a$ and  $z = s_1 + a$, seeking an upper bound for $\Re \frac{\Gamma'}{\Gamma}(s+a)$ and a lower bound for $\Re \frac{\Gamma'}{\Gamma}(s_1+a)$. 

Note that since $-1/2 \leq x - \lfloor x \rfloor - 1/2 < 1/2$, we have
\[
    \abs*{\Re \int_0^\infty \frac{x - \lfloor x \rfloor - 1/2}{(x + z)^2} \, d x} < \frac{1}{2} \int_0^\infty \frac{1}{\abs{x + z}^2} \, d x \leq \frac{1}{2} \int_0^\infty \frac{1}{(x + \Re z)^2} \, d x = \frac{1}{2 \Re z} .
\]
Moreover, since $1 \leq \sigma < \phi \leq \sigma_1 < 2.2$, we have
\[
- \frac{2\sigma + 2 a}{(2\sigma + 2a)^2 + 4 m^2 t^2} = -\Re \frac{1}{2(\sigma+a + imt)} \leq -\Re \frac{1}{2(\phi+a + imt)} = - \frac{2\phi + 2 a}{(2\phi + 2a)^2 + 4 m^2 t^2}
\]
and
\[
- \frac{2\sigma_1 + 2 a}{(2\sigma_1 + 2a)^2 + 4 m^2 t^2} = -\Re \frac{1}{2(\sigma_1 +a + imt)} \geq -\Re \frac{1}{2(\phi+a + imt)} = - \frac{2\phi + 2 a}{(2\phi + 2a)^2 + 4 m^2 t^2} .
\]
Applying these estimates, we get 
\begin{align*}
    \Re \frac{\Gamma'}{\Gamma}(s + a) 
    &\leq \log \sqrt{(\sigma + a)^{2} + m^{2}t^{2}} - \frac{2\sigma + 2 a}{(2\sigma + 2a)^2 + 4 m^2 t^2} + \frac{1}{2(\sigma + a)} \\
    &\leq \log \sqrt{(\phi + a)^{2} + m^{2}t^{2}} - \frac{2\phi + 2 a}{(2\phi + 2a)^2 + 4 m^2 t^2} + \frac{1}{2(1+a)}
\end{align*}
and
\begin{align*}
    \Re \frac{\Gamma'}{\Gamma}(s_1 + a) &\geq \log \sqrt{(\sigma_{1} + a)^{2} + m^{2}t^{2}} - \frac{2\sigma_{1} + 2 a}{(2\sigma_{1} + 2a)^2 + 4 m^2 t^2} - \frac{1}{2(\sigma_1 + a)} \\
    &\geq\log \sqrt{(\phi + a)^{2} + m^{2}t^{2}} - \frac{2\phi + 2 a}{(2\phi + 2a)^2 + 4 m^2 t^2} - \frac{1}{2(\phi + a)} .
\end{align*}
For the case $m=0$, we get
\begin{align*}
    \Re \Bigl( \frac{\Gamma'}{\Gamma}(s + a) - \frac{1}{\sqrt{5}} \frac{\Gamma'}{\Gamma}(s_1 + a) \Bigr)
    &\leq \log(\sigma+a) - \frac{1}{2(\sigma+a)} + \frac{1}{2(\sigma+a)} \\
    & \qquad {} - \frac{1}{\sqrt{5}} \left( \log(\sigma_1 +a) - \frac{1}{2(\sigma_1 +a)} - \frac{1}{2(\sigma_1 +a)} \right) \\
    &\leq \log(\phi+a) - \frac{1}{\sqrt{5}} \log(\phi+a) + \frac{1}{\sqrt{5}(\sigma_{1} +a)} \\
    &\leq \kappa \log(\phi+a) + \frac{1}{\sqrt{5}(\phi+a)} .
\end{align*}
For the case $m \neq 0$ and $\abs{t}<1$, we get

\begin{align*}
    \Re \Bigl( \frac{\Gamma'}{\Gamma}(s + a) - \frac{1}{\sqrt{5}} \frac{\Gamma'}{\Gamma}(s_1 + a) \Bigr)
    &\leq \log \sqrt{(\phi + a)^{2} + m^{2}t^{2}} - \frac{2\phi + 2 a}{(2\phi + 2a)^2 + 4 m^2 t^2} + \frac{1}{2(1+a)} \\
    & - \frac{1}{\sqrt{5}} \left( \log \sqrt{(\phi + a)^{2} + m^{2}t^{2}} - \frac{2\phi + 2 a}{(2\phi + 2a)^2 + 4 m^2 t^2} - \frac{1}{2(\phi + a)}  \right) \\
    &= \frac{\kappa}{2} \log((\phi+a)^{2} + m^{2}t^2) - \kappa \frac{2\phi + 2 a}{(2\phi + 2a)^2 + 4 m^2 t^2} \\ &+  \frac{1}{2(1+a)} +  \frac{1}{2\sqrt{5}(\phi + a)} \\
    &\leq \frac{\kappa}{2} \log((\phi+a)^{2} + m^{2}) - \kappa \frac{2\phi + 2 a}{(2\phi + 2a)^2 + 4 m^2}\\ & +  \frac{1}{2(1+a)} +  \frac{1}{2\sqrt{5}(\phi + a)} .
\end{align*}

For the case $m \neq 0$ and $\abs{t} \geq 1$, we have

\begin{align*}
    \Re \Bigl( \frac{\Gamma'}{\Gamma}(s + a) - \frac{1}{\sqrt{5}} \frac{\Gamma'}{\Gamma}(s_1 + a) \Bigr)
    &\leq \log \sqrt{(\phi + a)^{2} + m^{2}t^{2}} - \frac{2\phi + 2 a}{(2\phi + 2a)^2 + 4 m^2 t^2} + \frac{1}{2(1+a)} \\
    & - \frac{1}{\sqrt{5}} \left( \log \sqrt{(\phi + a)^{2} + m^{2}t^{2}} - \frac{2\phi + 2 a}{(2\phi + 2a)^2 + 4 m^2 t^2} - \frac{1}{2(\phi + a)}  \right) \\
    &= \frac{\kappa}{2} \log((\phi+a)^{2} + m^{2}t^2) - \kappa \frac{2\phi + 2 a}{(2\phi + 2a)^2 + 4 m^2 t^2} \\ & +  \frac{1}{2(1+a)} +  \frac{1}{2\sqrt{5}(\phi + a)} \\
    &\leq \kappa \log(m\abs{t}) + \frac{\kappa}{2} \log \Bigl(1+(\frac{\phi+a}{mt})^{2} \Bigr) + \frac{1}{2(1+a)} +  \frac{1}{2\sqrt{5}(\phi + a)} .
\end{align*}

\end{proof}

Since Lemma \ref{lem:genLem1&2MC} is mainly applied for $a=0$ or $a$ being a linear expression in the weight $k$, it is useful to rewrite the bounds in the lemma as follows. We have 

\begin{align*}
    &\Re \Bigl( \frac{\Gamma'}{\Gamma}(s + a) - \frac{1}{\sqrt{5}} \frac{\Gamma'}{\Gamma}(s_1 + a) \Bigr)\\
    &\leq \begin{cases}
        \frac{1}{\sqrt{5}\phi}+\kappa \log(\phi) & \text{if } m=0, \ a = 0 \\
        \kappa \log(k) + \frac{1}{\sqrt{5}(\phi+a)}+ \kappa \log\Bigl( \frac{\phi + a}{k} \Bigr) & \text{if } m=0, \ a \neq 0 \\        
        \kappa \log\abs{t} + \dfrac{1}{2} +  \dfrac{1}{2\sqrt{5}\phi}+ \kappa \log(m) + \frac{\kappa}{2} \log\Bigl( 1 + \Bigl( \frac{\phi}{m t} \Bigr)^2 \Bigr) & \text{if } m \neq 0,\ a = 0, \ \abs{t} \geq 1  \\
        \kappa \log\abs{t} + \kappa \log(k) + \dfrac{1}{2(1+a)}\\
         +  \dfrac{1}{2\sqrt{5}(\phi + a)}+ \kappa \log(m) + \frac{\kappa}{2} \log\Bigl( \frac{1}{k^2} + \Bigl( \frac{\phi + a}{k m t} \Bigr)^2 \Bigr) & \raisebox{3ex}{$\text{if } m \neq 0,\ a \neq 0, \ \abs{t} \geq 1 $}\\ 
        - \kappa \dfrac{\phi}{2\phi^2 + 2 m^2} +  \dfrac{1}{2} +  \dfrac{1}{2\sqrt{5}\phi}+\frac{\kappa}{2} \log(\phi^2 + m^2) & \text{if } m \neq 0,\ a = 0,\ \abs{t} < 1 \\
        \kappa \log(k) - \kappa \dfrac{2\phi + 2 a}{(2\phi + 2a)^2 + 4 m^2} +  \dfrac{1}{2(1+a)}\\
        +  \dfrac{1}{2\sqrt{5}(\phi + a)} + \frac{\kappa}{2} \log \Bigl( \frac{(\phi + a)^2 + m^2}{k^2} \Bigr) & \raisebox{3ex}{$\text{if } m \neq 0,\ a \neq 0,\ \abs{t} < 1.$}
    \end{cases}
\end{align*}

We now reformulate Lemma \ref{lem:genLem1&2MC} as the following corollary, explicitly isolating the main term from the error term in a manner suited to the context of our application.

\begin{cor}\label{cor: Gamma Bound with M and C Functions} If $a=0$ or linear in $k$, then we have the bound 
\[
\Re \Bigl( \frac{\Gamma'}{\Gamma}(s + a) - \frac{1}{\sqrt{5}} \frac{\Gamma'}{\Gamma}(s_1 + a) \Bigr)\leq M(a,m,t)+C(a,m,t),
\]
where 
\[
M(a,m,t)=\begin{cases}
    0 & m=0,\ a=0\\
    \kappa\log(k) & m=0,\ a\neq 0\\
    \kappa\log\abs{t} & m\neq 0,\ a=0,\ \abs{t} \geq 1\\
    \kappa\log\abs{t}+\kappa\log(k) & m\neq 0,\ a\neq 0,\ \abs{t}\geq 1\\
    0 & m\neq 0,\ a=0,\ \abs{t}<1\\
    \kappa\log(k) & m\neq 0,\ a\neq 0,\ \abs{t} <1
\end{cases}
\]
and 

\[
C(a,m,t)=\begin{cases}
    \frac{1}{\sqrt{5}\phi}+\kappa \log(\phi) &  m=0,\ a = 0 \\
         \frac{1}{\sqrt{5}(\phi+a)}+ \kappa \log\Bigl( \frac{\phi + a}{k} \Bigr) &  m=0,\ a \neq 0 \\        
         \dfrac{1}{2} +  \dfrac{1}{2\sqrt{5}\phi}+ \kappa \log(m) + \frac{\kappa}{2} \log\Bigl( 1 + \Bigl( \frac{\phi}{m t} \Bigr)^2 \Bigr)  &  m \neq 0,\   a = 0,\  \abs{t} \geq 1  \\
         \dfrac{1}{2(1+a)} +  \dfrac{1}{2\sqrt{5}(\phi + a)}\\
         + \kappa \log(m) + \frac{\kappa}{2} \log\Bigl( \frac{1}{k^2} + \Bigl( \frac{\phi + a}{k m t} \Bigr)^2 \Bigr) &  \raisebox{3ex}{$m \neq 0,\ a \neq 0,\  \abs{t} \geq 1$} \\ 
        - \kappa \dfrac{\phi}{2\phi^2 + 2 m^2} +  \dfrac{1}{2} +  \dfrac{1}{2\sqrt{5}\phi}+\frac{\kappa}{2} \log(\phi^2 + m^2)  & m \neq 0,\ a = 0,\ \abs{t} < 1 \\
        - \kappa \dfrac{2\phi + 2 a}{(2\phi + 2a)^2 + 4 m^2} +  \dfrac{1}{2(1+a)} \\
        +  \dfrac{1}{2\sqrt{5}(\phi + a)} + \frac{\kappa}{2} \log \Bigl( \frac{(\phi + a)^2 + m^2}{k^2} \Bigr) &  \raisebox{3ex}{$m \neq 0,\  a \neq 0,\ \abs{t} < 1.$}
\end{cases}
\]

\end{cor}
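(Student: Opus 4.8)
The plan is to obtain Corollary~\ref{cor: Gamma Bound with M and C Functions} as a purely organizational reformulation of Lemma~\ref{lem:genLem1&2MC}: in each of its six cases I would expand the logarithmic main term so that the contributions of $k$ and of $\abs{t}$ are pulled out explicitly, then define $M(a,m,t)$ to be the sum of those genuinely unbounded pieces and $C(a,m,t)$ to be whatever remains. Comparing the resulting expressions with the displayed formulas for $M$ and $C$ then finishes the proof.

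First I would record the elementary logarithm identities that pass from the bounds of Lemma~\ref{lem:genLem1&2MC} to the rewritten forms already displayed in the text just before the corollary. For $m\neq 0$, $\abs{t}\geq 1$, and $a=0$ this is
\[
\frac{\kappa}{2}\log\bigl(\phi^2+m^2t^2\bigr)
= \kappa\log\abs{t} + \kappa\log m + \frac{\kappa}{2}\log\Bigl(1+\Bigl(\tfrac{\phi}{mt}\Bigr)^2\Bigr),
\]
while for $a\neq 0$ one instead writes
\[
\frac{\kappa}{2}\log\bigl((\phi+a)^2+m^2t^2\bigr)
= \kappa\log\abs{t} + \kappa\log m + \kappa\log k + \frac{\kappa}{2}\log\Bigl(\tfrac{1}{k^2}+\Bigl(\tfrac{\phi+a}{kmt}\Bigr)^2\Bigr).
\]
For $\abs{t}<1$ one keeps $\frac{\kappa}{2}\log(\phi^2+m^2)$ intact when $a=0$, and when $a\neq 0$ extracts a $\kappa\log k$ via $\frac{\kappa}{2}\log((\phi+a)^2+m^2)=\kappa\log k+\frac{\kappa}{2}\log\bigl(\tfrac{(\phi+a)^2+m^2}{k^2}\bigr)$, the negative term $-\kappa\tfrac{2\phi+2a}{(2\phi+2a)^2+4m^2}$ simply being carried along unchanged; the $m=0$ cases use the analogous split of $\kappa\log(\phi+a)$.

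Next I would run through the six cases and read off $M$ and $C$. In every case $M(a,m,t)$ is exactly the sum of the $\kappa\log\abs{t}$ and $\kappa\log k$ terms that occur (so $M=0$ precisely when neither occurs, i.e.\ $a=0$ together with $m=0$ or $\abs{t}<1$), and $C(a,m,t)$ is defined as the rewritten bound minus $M(a,m,t)$; checking each line against the statement confirms these match the displayed $C$. I would then remark that $C(a,m,t)$ genuinely behaves like an error term: the hypothesis that $a$ is $0$ or linear in $k$ makes $\frac{\phi+a}{k}$ bounded, hence each logarithmic remainder inside $C$ is bounded, so $C(a,m,t)=O(1)$ as $k\to\infty$ for fixed $m$ and $t$ bounded away from zero.

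The only difficulty is clerical rather than mathematical: one must keep the six cases, and within them the sub-split on whether $a=0$, consistently aligned, since a single misplaced $\kappa\log k$ would move a term between $M$ and $C$. No analytic input beyond Lemma~\ref{lem:genLem1&2MC} is needed.
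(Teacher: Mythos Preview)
Your proposal is correct and matches the paper's approach exactly: the paper itself simply rewrites the bounds of Lemma~\ref{lem:genLem1&2MC} by splitting off the $\kappa\log k$ and $\kappa\log\abs{t}$ contributions (in the display immediately preceding the corollary) and then states the corollary without further proof, since it is, as you say, a purely organizational reformulation. Your case-by-case verification and the remark on why $C(a,m,t)$ is bounded are a slightly more detailed write-up of what the paper leaves implicit.
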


\section{A Positivity Argument via a Quartic Trigonometric Polynomial}\label{sec:positivity}

For $n=0,1,2,3,4$, we define the auxiliary functions $j_{f,n}(\sigma, t)$ as Ste\v{c}kin's differences:

\[j_{f,n}(\sigma, t)=\Re\left(\frac{1}{\sqrt{5}}\frac{L_{(n)}'}{L_{(n)}}(\sigma_1+it, f)-\frac{L_{(n)}'}{L_{(n)}}(\sigma+it,f)\right).\]
A key ingredient in the proof of Theorem \ref{main-thm} is a linear combination of the form \[\sum_{n=0}^{4}\sum_{m=0}^{4}c_{n,m}j_{f,n}(\sigma, mt)\] that is positive for all $\sigma>1$ and $t\in\mathbb{R}$. 
In constructing this combination, we adapt the standard approach employed in \cite{McCurley} by considering a twisted version of a degree 4 polynomial of the form $P(\theta)=\gamma(a+\cos(\theta))^2(b+\cos(\theta))^2$. We can write $P(\theta)=\sum_{m=0}^{4}a_m\cos(m\theta)$, where
\begin{align*}
    a_0 &= \frac{\gamma}{2} \Bigl( \frac{3}{4} + 4 a b + b^2 + a^2(1 + 2 b^2) \Bigr) \\
    a_1 &= \frac{\gamma}{2} (a + b)(3 + 4 a b) \\
    a_2 &= \frac{\gamma}{2} (1 + a^2 + 4 a b + b^2) \\
    a_3 &= \frac{\gamma}{2} (a + b) \\
    a_4 &= \frac{\gamma}{8}.
\end{align*}
\begin{prop}\label{prop:positivity}
For $\sigma>1$ and $t\in\mathbb{R}$, we have
\begin{align*}
0&\leq 8a^2b^2j(\sigma,0)+(a_2-4)j_{f,2}(\sigma,0)+3j_{f,4}(\sigma,0)+(a_1-3a_3)j_{f}(\sigma,t)+3a_3j_{f,3}(\sigma,t)\\
&\hspace{1em}+(a_2-4)j_{f,2}(\sigma, 2t)+4j_{f,4}(\sigma, 2t)
+a_3j_{f,3}(\sigma, 3t)
+j_{f,4}(\sigma, 4t).
\end{align*}
\end{prop}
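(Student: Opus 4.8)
The plan is to obtain the inequality by combining, via summation of a Dirichlet series, the pointwise nonnegativity of the twisted quartic $P_4(\theta;n)=8\bigl(a+\Lambda_f(n)\cos\theta\bigr)^2\bigl(b+\Lambda_f(n)\cos\theta\bigr)^2\ge0$ with the nonnegativity of the weights $\Lambda(n)\bigl(n^{-\sigma}-\tfrac1{\sqrt5}n^{-\sigma_1}\bigr)$, and then identifying the resulting sums with the Ste\v{c}kin differences $j_{f,m}(\sigma,\ell t)$ through Lemma \ref{L_q-log-der}.

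First I would record the Dirichlet series form of the auxiliary functions. Since $\alpha_1(p)=\overline{\alpha_2(p)}$ when $p\nmid N$ and $\alpha_2(p)=0$, $\alpha_1(p)=\pm p^{-1/2}$ when $p\mid N$, each coefficient $\Lambda_f(n)$ is real, so by Lemma \ref{L_q-log-der}, for every $\tau\in\mathbb R$,
\[
j_{f,m}(\sigma,\tau)=\Re\Bigl(\tfrac1{\sqrt5}\tfrac{L_{(m)}'}{L_{(m)}}(\sigma_1+i\tau,f)-\tfrac{L_{(m)}'}{L_{(m)}}(\sigma+i\tau,f)\Bigr)=\sum_{n\ge1}\Lambda(n)\,\Lambda_f(n)^m\cos(\tau\log n)\,w_n,\qquad w_n:=\frac1{n^\sigma}-\frac1{\sqrt5\,n^{\sigma_1}},
\]
with the $m=0$ case being the corresponding identity for $L_{(0)}(\cdot,f)=\zeta$, so that $j(\sigma,0)=j_{f,0}(\sigma,0)$ in the notation of the proposition. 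These series converge absolutely for $\sigma,\sigma_1>1$ because $\Lambda_f$ is bounded and supported on prime powers, which also justifies every rearrangement below. The one elementary point is $w_n\ge0$ for all $n\ge1$: since $1<\sigma<\phi<\sigma_1$ we have $n^{\sigma_1-\sigma}\ge1$, whence $\sqrt5\,n^{\sigma_1}\ge\sqrt5\,n^\sigma>n^\sigma$.

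Next I would expand the quartic. Writing $c=\cos\theta$ and using $(a+\Lambda_f(n)c)(b+\Lambda_f(n)c)=ab+(a+b)\Lambda_f(n)c+\Lambda_f(n)^2c^2$, squaring, multiplying by $8$, and applying the power-reduction formulas for $c^2,c^3,c^4$ together with the identities $a_2-4=4(a^2+4ab+b^2)$, $a_3=4(a+b)$ and $a_1-3a_3=16ab(a+b)$ — all read off from the displayed formulas for the $a_m$ with $\gamma=8$ — one obtains the pointwise identity
\begin{align*}
P_4(\theta;n)&=8a^2b^2+(a_2-4)\Lambda_f(n)^2+3\Lambda_f(n)^4+\bigl((a_1-3a_3)\Lambda_f(n)+3a_3\Lambda_f(n)^3\bigr)\cos(\theta)\\
&\quad+\bigl((a_2-4)\Lambda_f(n)^2+4\Lambda_f(n)^4\bigr)\cos(2\theta)+a_3\Lambda_f(n)^3\cos(3\theta)+\Lambda_f(n)^4\cos(4\theta),
\end{align*}
in which each monomial has the form $\Lambda_f(n)^m\cos(\ell\theta)$ with $\ell\le m\le4$.

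Finally, for each $n\ge1$ I would set $\theta=t\log n$, multiply the identity by $\Lambda(n)w_n\ge0$, and sum over $n\ge1$. The left-hand side becomes a sum of nonnegative quantities, hence $\ge0$, while on the right-hand side each sum $\sum_{n\ge1}\Lambda(n)\Lambda_f(n)^m\cos(\ell t\log n)\,w_n$ equals $j_{f,m}(\sigma,\ell t)$ by the first step; matching the nine terms produces exactly the asserted inequality. The argument is essentially bookkeeping; the only things requiring care are that $\Lambda_f(n)$ is real (so its powers are precisely the Dirichlet coefficients supplied by Lemma \ref{L_q-log-der}), that the $\cos\theta$-contribution splits into a degree-one and a degree-three piece in $\Lambda_f(n)$ — which is why $a_1$ is distributed as $(a_1-3a_3)$ against $j_f(\sigma,t)$ and $3a_3$ against $j_{f,3}(\sigma,t)$ — and the sign $w_n\ge0$, the sole place where $\sigma<\sigma_1$ enters. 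I do not anticipate any genuine obstacle.
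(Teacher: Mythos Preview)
Your argument is correct and follows essentially the same route as the paper: expand the Ste\v{c}kin differences $j_{f,m}(\sigma,\ell t)$ as Dirichlet series via Lemma~\ref{L_q-log-der}, recognize the resulting linear combination as $\sum_{n\ge1}\Lambda(n)w_n\,P_4(t\log n;n)$, and conclude from $w_n\ge0$ and $P_4(\theta;n)\ge0$. The only cosmetic difference is direction---you start from $P_4\ge0$ and identify the sums, while the paper starts from the linear combination of $j$'s and collapses it to $P_4$---and you make the reality of $\Lambda_f(n)$ and the sign of $w_n$ slightly more explicit than the paper does.
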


\begin{proof}
By expanding each $j_{f,n}(\sigma,mt)$ as a Dirichlet series, we get
\begin{align*}
 &8a^2b^2j(\sigma,0)+(a_2-4)j_{f,2}(\sigma,0)+3j_{f,4}(\sigma,0)+(a_1-3a_3)j_{f}(\sigma,t)+3a_3j_{f,3}(\sigma,t)\\
&\hspace{1em}+(a_2-4)j_{f,2}(\sigma, 2t)+4j_{f,4}(\sigma, 2t)
+a_3j_{f,3}(\sigma, 3t)
+j_{f,4}(\sigma, 4t)\\&=   \sum_{n\geq 1}\left(1-\frac{1}{\sqrt{5}n^{\sigma_1-\sigma}}\right)\frac{\Lambda(n)}{n^\sigma}\Big(8a^2b^2+(a_2-4)\Lambda_{f,2}(n)+3\Lambda_{f,4}(n)\\&\hspace{12em}
    + \left((a_1-3a_3)\Lambda_f(n)+3a_3\Lambda_{f,3}(n)\right)\cos(t\log n)
   \\&\hspace{12em} +\left((a_2-4)\Lambda_{f,2}(n)+4\Lambda_{f,4}(n)\right)\cos(2t\log n)
 \\&\hspace{12em}   +\left(a_3\Lambda_{f,3}(n)\right)\cos(3t\log n)
    +\left(\Lambda_{f,4}(n)\right)\cos(4t\log n)\Big).
\end{align*}
Using Lemma \ref{L_q-log-der}, we can write
\begin{align*}
 &8a^2b^2j(\sigma,0)+(a_2-4)j_{f,2}(\sigma,0)+3j_{f,4}(\sigma,0)+(a_1-3a_3)j_{f}(\sigma,t)+3a_3j_{f,3}(\sigma,t)\\
&\hspace{1em}+(a_2-4)j_{f,2}(\sigma, 2t)+4j_{f,4}(\sigma, 2t)
+a_3j_{f,3}(\sigma, 3t)
+j_{f,4}(\sigma, 4t)\\&=  \sum_{n\geq 1}\left(1-\frac{1}{\sqrt{5}n^{\sigma_1-\sigma}}\right)\frac{\Lambda(n)}{n^\sigma}\Big(8a^2b^2+(a_2-4)\Lambda_f(n)^2+3\Lambda_{f}(n)^4\\&\hspace{12em}
    + \left((a_1-3a_3)\Lambda_f(n)+3a_3\Lambda_{f}(n)^3\right)\cos(t\log n)
   \\&\hspace{12em} +\left((a_2-4)\Lambda_{f}(n)^2+4\Lambda_{f}(n)^4\right)\cos(2t\log n)
 \\&\hspace{12em}   +a_3\Lambda_{f}(n)^3\cos(3t\log n)
    +\Lambda_{f}(n)^4\cos(4t\log n)\Big)\\ &=\sum_{n\geq 1}\left(1-\frac{1}{\sqrt{5}n^{\sigma_1-\sigma}}\right)\frac{\Lambda(n)}{n^\sigma}P_4(t\log n;n),
\end{align*}
where $P_4(\theta;n)=8(a+\Lambda_f(n)\cos\theta)^2(b+\Lambda_f(n)\cos\theta)^2$,  which is clearly non-negative.

\end{proof}

If $\beta_0+it$ is a zero of $L(s,f)$ with $\frac12<\beta_0$ and $\abs{t}\geq1$, our computations in Section \ref{sec: ZFR for t large} will reveal that 
\[
    \beta_0 \leq 1 - \left(\frac{A + B - 2\sqrt{AB}}{C\kappa}\right)\frac{1}{\log (Nk\abs{t})},
\]
with $A = a_1 + 3a_3$, $B = 8a^2b^2 + a_2 + 2$ and $C = a_1 + 2a_2 + 13a_3 + 32$. Hence, for an optimal zero-free region using the methods of this paper, we seek a choice of $(a,b)$ that maximizes the quantity $\frac{A + B - 2\sqrt{AB}}{C\kappa}$. This is accomplished by choosing  $a = 1.5315, b = 0.374949$. Notice that this choice of $(a,b)$ differs from the one made in \cite{McCurley}. The choice of $\gamma > 0$ does not affect the optimality of the zero-free region, and so we choose $\gamma=8$ which yields a monic polynomial. It is noted in \cite{McCurley} that their polynomial is nearly optimal for their purposes, and it is the same as the one used by Rosser and Schoenfeld for $\zeta(s)$ in \cite{rosser-schoenfeld-math-comps}. Here, we choose the optimal polynomial for the method used. Henceforth, we set
\begin{align}&\gamma=8,\; a=1.5315,\; b=0.374949,\label{eqn:coefficients-1}\\
&a_0=24.77002742,\;a_1=40.39336536,\label{eqn:coefficients-2}\\&a_2=23.13206631,\;
a_3=7.625796,\; \text{and}\; a_4=1.\label{eqn:coefficients-3} \end{align}

\section{Bounds for Auxiliary Terms}\label{sec: Bounds for Auxiliary Terms}

In each of the following subsections we will establish bounds for the functions $j_{f,n}(\sigma, t)$ for $n = 0, 1, 2, 3$ and $4$.

\subsection{Bounding \texorpdfstring{$j_{f,0}(\sigma,0)$}{jf0(sigma,0)}}\label{subsec: Bounding jf0(sigma,0)}

\begin{lem}\label{lem: j_f0(sigma,0) bound}
Let $1<\sigma<\phi$.  We have
\begin{equation*}\label{eqn:jf0-simplified}
j_{f,0}(\sigma, 0)\leq j_{f,0}^{\text{Main}}(\sigma, 0)+j_{f,0}^{\text{Error}}(\sigma, 0),
\end{equation*}
where
\begin{equation}\label{eq: main term for small j_1 in all cases}
j_{f,0}^{\text{Main}}(\sigma, 0)=\frac{1}{\sigma-1},
\end{equation}
and 
\begin{equation}\label{eq: error term for j_1 in all cases}
j_{f,0}^{\text{Error}}(\sigma, 0)=-0.601655.
\end{equation}
\end{lem}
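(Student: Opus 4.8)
The plan is to recognize that $j_{f,0}(\sigma,0)$ is the Ste\v{c}kin difference of $-\zeta'/\zeta$ at the real points $\sigma$ and $\sigma_1$, so the classical Hadamard factorization \eqref{classical-Hadamard} applies directly. First I would write
\[
j_{f,0}(\sigma,0) = \Re\left(\frac{1}{\sqrt 5}\frac{\zeta'}{\zeta}(\sigma_1) - \frac{\zeta'}{\zeta}(\sigma)\right) = \left(-\frac{\zeta'}{\zeta}(\sigma)\right) - \frac{1}{\sqrt5}\left(-\frac{\zeta'}{\zeta}(\sigma_1)\right),
\]
and then substitute \eqref{classical-Hadamard} for each term, with $s=\sigma$ (so $t_0=0$) and with $s=\sigma_1$. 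The pole term $\Re(1/(s-1))$ contributes $\frac{1}{\sigma-1}$ from the $\sigma$-term and $-\frac{1}{\sqrt5(\sigma_1-1)}$ from the $\sigma_1$-term; since $\sigma_1>\phi>1$ the latter is negative, and it should be absorbed into the error term. The main term $j_{f,0}^{\text{Main}}(\sigma,0) = \frac{1}{\sigma-1}$ then comes solely from the pole of $\zeta$ at $s=1$.

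Next I would collect everything else into the error term. The sum-over-zeros of $\zeta$ contributes $-\sum_\rho \Re(1/(\sigma-\rho)) + \frac{1}{\sqrt5}\sum_\rho\Re(1/(\sigma_1-\rho))$; pairing $\rho$ with $1-\bar\rho$ and invoking Lemma \ref{lem:Stechkin}(a) (valid since every nontrivial zero has $0<\Re\rho<1$) shows this entire contribution is $\le 0$. What remains is the archimedean/constant part:
\[
j_{f,0}^{\text{Error}}(\sigma,0) = -\frac{1}{\sqrt5(\sigma_1-1)} - \left(1-\frac{1}{\sqrt5}\right)\frac{\log\pi}{2} + \frac12\,\Re\frac{\Gamma'}{\Gamma}\!\left(1+\frac{\sigma}{2}\right) - \frac{1}{2\sqrt5}\,\Re\frac{\Gamma'}{\Gamma}\!\left(1+\frac{\sigma_1}{2}\right),
\]
plus the $-\frac{1}{\sqrt5(\sigma_1-1)}$ already mentioned. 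The digamma piece can be bounded via Lemma \ref{lem:genLem1&2MC} (or rather Corollary \ref{cor: Gamma Bound with M and C Functions}) with $m=0$ and $a=2$, after rescaling $\frac{\Gamma'}{\Gamma}(1+s/2) = \frac12\frac{\Gamma'}{\Gamma}(s/2+1)$... here one must be a bit careful since the Lemma is stated for $\frac{\Gamma'}{\Gamma}(s+a)$ rather than $\frac{\Gamma'}{\Gamma}(s/2+a)$, so I would instead directly use \eqref{eqn:digamma-expansion} to get an upper bound for $\Re\frac{\Gamma'}{\Gamma}(1+\sigma/2)$ and a lower bound for $\Re\frac{\Gamma'}{\Gamma}(1+\sigma_1/2)$, exactly as in the proof of Lemma \ref{lem:genLem1&2MC}.

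The main obstacle is purely quantitative: one has to verify that the resulting expression in $\sigma$ (and the induced $\sigma_1(\sigma)$) is bounded above by the clean constant $-0.601655$ uniformly for $\sigma\in(1,\phi)$. I would handle this by noting that each surviving term is monotone or easily estimated on this interval — $\frac{1}{\sqrt5(\sigma_1-1)}$ is decreasing in $\sigma$ hence its negative is maximized at $\sigma\to1^+$ where $\sigma_1\to\phi$; the Euler constant $\gamma$ appears through $\Re\frac{\Gamma'}{\Gamma}(1) = -\gamma$ type expansions; and the remaining digamma and $\int_0^\infty$ remainder terms are controlled on $(1,\phi)$ by the elementary bounds from \eqref{eqn:digamma-expansion}. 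Evaluating the worst case (essentially $\sigma\to1$, $\sigma_1\to\phi$) and checking the number numerically against $-0.601655$ finishes the proof; I expect the bound to be attained (or nearly so) in that limit, which is why the constant is stated to six digits rather than rounded.
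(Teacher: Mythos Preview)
Your approach is correct in outline and would yield the lemma, but it differs from the paper's proof in one key respect. The paper does \emph{not} expand $-\frac{1}{\sqrt5}\bigl(-\frac{\zeta'}{\zeta}(\sigma_1)\bigr)$ via the Hadamard formula at all; instead it bounds $\frac{1}{\sqrt5}\frac{\zeta'}{\zeta}(\sigma_1)$ directly from the Dirichlet series $-\frac{1}{\sqrt5}\sum_p \frac{\log p}{p^{\sigma_1}-1}$, truncating at $p\le 10000$ to get the clean numerical input $<-0.19197$. Consequently the paper never needs the Ste\v{c}kin pairing Lemma~\ref{lem:Stechkin}(a) here: the single zero-sum coming from $-\frac{\zeta'}{\zeta}(\sigma)$ is discarded by the trivial observation $\Re\frac{1}{\sigma-\rho}>0$. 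For the digamma piece the paper uses the tangent-line bound $\frac{\Gamma'}{\Gamma}(x)\le 2-\gamma-2\log 2+(\tfrac{\pi^2}{4}-2)(2x-3)$ (exact at $x=3/2$) rather than the integral remainder from~\eqref{eqn:digamma-expansion}. Your route---Hadamard for both points plus Ste\v{c}kin's lemma---is more in the spirit of the later $j_{f,m}$ estimates, and in fact would give a slightly sharper numerical constant; the paper's route is simply shorter.

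One small correction: your claim that the worst case occurs as $\sigma\to 1^+$ is not right. In your decomposition the term $-\frac{1}{\sqrt5(\sigma_1-1)}$ is \emph{most} negative at $\sigma\to 1$ (helping you), while $\tfrac12\frac{\Gamma'}{\Gamma}(1+\sigma/2)$ is increasing in $\sigma$; the maximum of your error expression over $(1,\phi)$ is attained near $\sigma\to\phi$, not $\sigma\to1$. This does not break the argument---the value there is still below $-0.601655$---but you should identify the correct endpoint when doing the numerical check.
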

\begin{proof}
Recall that \[j_{f,0}(\sigma, 0)=\frac{1}{\sqrt{5}}\frac{\zeta'}{\zeta}(\sigma_1)-\frac{\zeta'}{\zeta}(\sigma).\]
By \eqref{classical-Hadamard}, we have
\begin{align*}
    -\logd{\zeta}(\sigma) &= \frac{1}{\sigma-1}-\frac{1}{2} \log\pi + \frac{1}{2} \logd{\Gamma} \left( \frac{\sigma + 2}{2} \right) - \Re \sum_{\rho} \frac{1}{\sigma - \rho} \\
    &\leq \frac{1}{\sigma-1}-\frac{1}{2} \log\pi + \frac{1}{2} \logd{\Gamma} \left( \frac{\sigma + 2}{2} \right),
\end{align*}
where the sum is over all non-trivial zeros $\rho$ of $\zeta$ can be discarded since $\sigma > \Re(\rho)$ for each $\rho$. 
Applying the standard upper bound \begin{align*}\label{eq:digamma-bd-2}
    \logd{\Gamma}(x) \leq 2 - \gamma - 2 \log 2 + \left( \frac{\pi^{2}}{4} - 2 \right) (2x-3),
\end{align*}
we get 
\[
    j_{f,0}(\sigma,0) \leq  \frac1{\sqrt{5}}\logd{\zeta}(\sigma_1)+ \frac1{\sigma-1}-\frac1{2}\log\pi + 1 - \frac{\gamma}{2} - \log 2 + \left(\frac{\pi^2}{8}-1\right)(\sigma-1),
\]
where 
\begin{equation}\label{eq: Bound on logderiv zeta sigma1}
 \frac1{\sqrt{5}}\logd{\zeta}(\sigma_1)=-\frac1{\sqrt{5}}\sum_{p \text{ prime}} \frac{\log{p}}{p^{\sigma_1}-1} < -\frac{1}{\sqrt{5}}\sum_{\substack{p \text{ prime} \\ p \leq 10000}} \frac{\log{p}}{p^{\sigma_1}-1}  < -0.19197 . 
\end{equation}
We conclude the lemma by observing that
\[-\frac1{2}\log\pi + 1 - \frac{\gamma}{2} - \log 2 + \left(\frac{\pi^2}{8}-1\right)(\phi-1)-0.19197 \leq -0.601655.\qedhere\]
\end{proof}

\subsection{Bounding \texorpdfstring{$j_{f,1}(\sigma,t)$}{jf(sigma,t)}}\label{sec:jf}

\begin{lem}\label{lem: j_f(sigma,t) bound}

Let $\beta_0 + it_{0}$  be a zero of $L(s,f)$ such that $\beta_0 > 1/2$. Let $1<\sigma<\phi $. Then if $0\neq t=t_0$, we have,
\begin{equation*}\label{jf-bound-simplified-t}
j_{f,1}(\sigma,t) = j_f(\sigma,t) \leq j_f^{\text{Main}}(\sigma,t) + j_f^{\text{Error}}(\sigma,t),
\end{equation*}
where
\begin{equation*}\label{jf-bound-simplified-t-Main}
j_f^{\text{Main}}(\sigma,t) = \begin{cases} \frac{\kappa}{2}\log N + \kappa \log (\abs{t}) + \kappa \log k - \frac{1}{\sigma - \beta_0}&\text{ if }\abs{t} \geq 1\\
\frac{\kappa}{2}\log N + \kappa \log k - \frac{1}{\sigma - \beta_0}&\text{ if }\abs{t} < 1,\\
\end{cases}
\end{equation*}
and
\begin{equation*}\label{jf-bound-simplified-t-Error}
j_f^{\text{Error}}(\sigma,t) = \begin{cases} 
    {-0.48973} & \text{ if }\abs{t} \geq 1 \\
    {-0.596438} & \text{ if }\abs{t} < 1.
\end{cases}
\end{equation*}
On the other hand, if $t = 0$ and $\abs{t_0} < 1$, we have
\begin{equation*}\label{jf-bound-simplified-t-0}
j_f(\sigma,0) \leq j_f^{\text{Main}}(\sigma,0) + j_f^{\text{Error}}(\sigma,0),
\end{equation*}
where
\begin{equation}\label{jf-bound-simplified-t-0-Main}
j_f^{\text{Main}}(\sigma,0) = 
\frac{\kappa}{2}\log N + \kappa \log k - 2\frac{\sigma - \beta_0}{(\sigma - \beta_0)^2 + t_0^2}
\end{equation}
and
\begin{equation}\label{jf-bound-simplified-t-0-Error}
j_f^{\text{Error}}(\sigma,0) = {0.603562}.
\end{equation}
\end{lem}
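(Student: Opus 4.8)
The plan is to apply the Hadamard-factorization identity \eqref{Hadamard-Symf} for $L(s,\sym^1 f) = L(s,f)$ (with $m=1$) at the points $s = \sigma + it$ and $s_1 = \sigma_1 + it$, take the appropriate Ste\v{c}kin combination, and bound the three resulting pieces: the $\log N$ term, the gamma-factor term, and the sum over zeros. First I would write
\[
j_f(\sigma,t) = \Re\left(\frac{1}{\sqrt5}\frac{L'}{L}(s_1,f) - \frac{L'}{L}(s,f)\right)
= \frac{\kappa}{2}\log N + \Re\left(\frac{\gamma'}{\gamma}(s,f) - \frac{1}{\sqrt5}\frac{\gamma'}{\gamma}(s_1,f)\right) - \Re\sum_\rho\left(\frac{1}{s-\rho} - \frac{1}{\sqrt5}\frac{1}{s_1-\rho}\right),
\]
where the $\frac{m}{2}\log N = \frac12\log N$ term contributes $\frac{\kappa}{2}\log N$ after the Ste\v{c}kin combination (since the $\log N$ coefficient in \eqref{Hadamard-Symf} is independent of $s$, we get $(1-\frac{1}{\sqrt5})\cdot\frac12\log N = \frac{\kappa}{2}\log N$). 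From \eqref{Gamma-symf} with $m=1$ ($n=0$), $\gamma(s,f) = \Gamma_{\mathbb C}(s + \frac{k-1}{2}) = 2(2\pi)^{-s-\frac{k-1}{2}}\Gamma(s+\frac{k-1}{2})$, so $\frac{\gamma'}{\gamma}(s,f) = -\log(2\pi) + \frac{\Gamma'}{\Gamma}(s + \frac{k-1}{2})$; the $-\log(2\pi)$ contributes $-\kappa\log(2\pi)$ after the Ste\v{c}kin combination, and the digamma term is handled by Corollary \ref{cor: Gamma Bound with M and C Functions} with $a = \frac{k-1}{2}$ (which is linear in $k$, as required) and $m=1$.

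Next I would handle the sum over zeros. Pairing each zero $\rho$ with $1-\bar\rho$ (also a zero, by the functional equation, at least after accounting for self-duality — but here we only need that the functional equation relates zeros, or more simply use $\Re\frac{1}{s-\rho} \geq 0$ term-by-term for $\sigma > 1 \geq \Re\rho$), we recognize the combination as $F(s,\rho) - \frac{1}{\sqrt5}F(s_1,\rho)$ in the notation of Section \ref{sec: Notation and Preliminary Lemmas} (or its one-term variant). By Lemma \ref{lem:Stechkin}(a), every such term is $\geq 0$, so we may discard all zeros except the distinguished one $\rho_0 = \beta_0 + it_0$. When $t = t_0 \neq 0$ we keep the term $\Re\frac{1}{s-1+\overline{\rho_0}} - \frac{1}{\sqrt5}F(s_1,\rho_0)$ paired off via Lemma \ref{lem:Stechkin}(b) and isolate $\Re\frac{1}{s-\rho_0} = \frac{1}{\sigma-\beta_0}$, giving the $-\frac{1}{\sigma-\beta_0}$ in $j_f^{\text{Main}}$. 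For the $|t|<1$, $t=0$ (but $t_0 \neq 0$, with $|t_0|<1$) case, $s - \rho_0 = (\sigma-\beta_0) - it_0$, so $\Re\frac{1}{s-\rho_0} = \frac{\sigma-\beta_0}{(\sigma-\beta_0)^2 + t_0^2}$, and we additionally invoke Lemma \ref{lem:eq22mccurley} to bound the leftover $\Re\frac{1}{\sigma-1+\overline{\rho_0}} - \frac{1}{\sqrt5}F(\sigma_1,\rho_0) \geq -0.6$, which accounts for the different (positive) error constant $0.603562$ in \eqref{jf-bound-simplified-t-0-Error} — note the sign flip because this leftover is subtracted.

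Finally I would assemble the error term: combine $-\kappa\log(2\pi)$, the constant $C(\frac{k-1}{2},1,t)$ from Corollary \ref{cor: Gamma Bound with M and C Functions}, the $\frac{k}{2}\log k$ vs $\log k$ bookkeeping (matching $\kappa\log(\frac{k-1}{2}+\phi)$ against $\kappa\log k$, which needs $\frac{k-1}{2} + \phi \leq k$, i.e. $k \geq 2\phi - 1 \approx 2.236$, valid for even $k \geq 4$; the case $k=2$ would need separate checking or the bound $\frac{k-1}{2}+\phi$ is simply absorbed since it only decreases the main term relative to $\kappa\log k$ when... actually one checks $\frac{k-1}{2}+\phi \le k \iff \phi - \frac12 \le \frac k2$, true for $k\ge 2$ since $\phi - \frac12 \approx 1.118 < 1$ fails — so for $k=2$ one keeps the genuine $\kappa\log(\frac12 + \phi)$ and folds the slack into the error constant), plus the $\geq -0.6$ or $\geq \frac{m\text{-term bound}}{}$ from the zeros, and the terms like $\frac{\kappa}{2}\log(1 + (\phi+a)^2/(mt)^2)$ for $|t|\geq1$ which are uniformly bounded (largest at $|t|=1$). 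The main obstacle I expect is the careful packaging of the $k$-dependent pieces: ensuring that everything involving $k$ beyond the stated $\kappa\log k$ main term is genuinely bounded by an absolute constant uniformly in $k \geq 2$ (even), and separately that the $|t|\geq 1$ logarithmic term $\frac{\kappa}{2}\log(1 + (\phi + \frac{k-1}{2})^2/t^2)$ does not blow up — it does grow like $\kappa\log k$ for fixed $t$, so in fact this must be the part that gets absorbed into the $\kappa\log k$ main term rather than the error, and reconciling $\kappa\log\sqrt{(\phi+\frac{k-1}{2})^2 + t^2}$ with $\kappa\log(|t|k)$ (for $|t|\geq1$) or $\kappa\log k$ (for $|t|<1$) is where the bookkeeping has to be done precisely to land the clean main terms stated in the lemma with only an absolute-constant error. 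The numerical error constants ($-0.48973$, $-0.596438$, $0.603562$) then come from plugging $\sigma \in (1,\phi)$ into the resulting explicit expressions and taking the supremum (monotonicity in $\sigma$, or a short interval check).
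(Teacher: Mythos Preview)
Your overall approach matches the paper's: apply \eqref{Hadamard-Symf} at $s$ and $s_1$, take the Ste\v{c}kin combination, use Corollary~\ref{cor: Gamma Bound with M and C Functions} with $a=\frac{k-1}{2}$ and $m=1$ for the digamma piece, and handle the zero sum via Lemma~\ref{lem:Stechkin}. The $t=t_0\neq 0$ case is exactly as you describe.

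There is, however, a genuine gap in your $t=0$ case. You isolate only the single zero $\rho_0=\beta_0+it_0$ and obtain $\Re\frac{1}{\sigma-\rho_0}=\frac{\sigma-\beta_0}{(\sigma-\beta_0)^2+t_0^2}$, together with one application of Lemma~\ref{lem:eq22mccurley} contributing $-0.6$. But the lemma's main term \eqref{jf-bound-simplified-t-0-Main} has a factor of $2$ in front of $\frac{\sigma-\beta_0}{(\sigma-\beta_0)^2+t_0^2}$, which your argument does not produce. The missing idea is that since $f$ has real Hecke eigenvalues, $\overline{\rho_0}=\beta_0-it_0$ is also a zero of $L(s,f)$, distinct from $\rho_0$ when $t_0\neq 0$. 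The paper retains \emph{both} $\rho_0$ and $\overline{\rho_0}$ from the zero sum, applies Lemma~\ref{lem:eq22mccurley} to each (giving error $2\times 0.6=1.2$), and obtains
\[
-\Re\Bigl(\frac{1}{\sigma-\rho_0}+\frac{1}{\sigma-\overline{\rho_0}}\Bigr)+1.2
= -\frac{2(\sigma-\beta_0)}{(\sigma-\beta_0)^2+t_0^2}+1.2.
\]
This is what yields the stated main term and the error constant $0.603562$ (rather than a constant near $0$ that your single-zero version would give).

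Separately, your extended worry about the $k$-bookkeeping is unnecessary. Corollary~\ref{cor: Gamma Bound with M and C Functions} is already written so that, for $m\neq 0$ and $a=\frac{k-1}{2}$, the main term is exactly $\kappa\log k$ (plus $\kappa\log|t|$ when $|t|\geq 1$), and the remainder $C\bigl(\tfrac{k-1}{2},1,t\bigr)$ involves only the bounded quantities $\frac{1}{k+1}$, $\frac{1}{2\sqrt5(\phi+\frac{k-1}{2})}$, and $\frac{\kappa}{2}\log\bigl(\frac{1}{k^2}+\bigl(\frac{\phi+\frac{k-1}{2}}{kt}\bigr)^2\bigr)$. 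Since $\frac{\phi+\frac{k-1}{2}}{k}\leq \frac{\phi+\frac12}{2}$ for all even $k\geq 2$, this is uniformly bounded; the stated numerical constants come from evaluating at $k=2$, $|t|=1$. No separate treatment of $k=2$ is needed.
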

\begin{proof}
Using Lemma \ref{lem:genLem1&2MC}, we have
\begin{align}
    \begin{split}\label{eq: Intermediate Bound for jf(sigma,t)}
    j_f(\sigma,t) 
    \leq{}& \frac{\kappa}{2} \log N - \kappa \log(2\pi) + M \Bigl(\frac{k-1}{2},1, t \Bigr) + C \Bigl(\frac{k-1}{2},1, t \Bigr)  \\
    &- \Re \left( \sum_\rho \frac{1}{\sigma+it-\rho} - \frac1{\sqrt{5}} \sum_\rho \frac1{\sigma_1+it-\rho} \right).
    \end{split}
\end{align}
If $\rho$ is a non-trivial zero for $L(s,f)$, then so is $1- \overline{\rho}$. Hence,
\[ 
\sum_{\substack{\rho \\ 0 < \Re(\rho) < 1}} \frac1{\sigma+it-\rho} = \sideset{}{'}\sum_{\substack{\rho \\ \Re(\rho) \geq 1/2}} \left(\frac1{\sigma+it-\rho} + \frac{1}{\sigma + it - 1 +\overline{\rho}}\right),
\]
where we use $\sideset{}{'}\sum$ to indicate that the terms of the sum with $\Re(\rho) = 1/2$ are counted with weight $1/2$ since when $\Re \rho = \frac{1}{2}$, we have
\[
\dfrac{1}{2} \left( \dfrac{1}{\sigma+it-\rho} + \dfrac{1}{\sigma + it - 1 +\overline{\rho}} \right) = \dfrac{1}{\sigma+it-\rho} .
\]
By Lemma \ref{lem:Stechkin} (a), we know that for each $\rho$ such that $0 < \Re(\rho) < 1$, we have 
\[ 
    F(\sigma + it,\rho) - \frac{1}{\sqrt{5}} F(\sigma_1 + it,\rho) \geq 0,
\] 
where
\begin{equation*}
    F(s,\rho) = \Re\Bigl(\frac{1}{\sigma+it-\rho} + \frac{1}{\sigma + it - 1 +\overline{\rho}} \Bigr) .
\end{equation*}
It follows that
\begin{align}\label{eqn:sum-zeros-upper-bound}
    - \Re \left( \sum_\rho \frac{1}{\sigma+it-\rho} - \frac1{\sqrt{5}} \sum_\rho \frac1{\sigma_1+it-\rho} \right) &\leq 
        -\left( F(\sigma + it, \rho_{0}) - \frac{1}{\sqrt{5}} F(\sigma_1 + it,\rho_{0})\right).
\end{align}
Moreover, by applying Lemma \ref{lem:Stechkin} (b),  if $t  = t_{0}$ and $\frac{1}{2} < \beta_{0} < 1$, we get that
\begin{equation}\label{eqn:lower-bound-F-0}
    F(\sigma+it,\rho_0)-\frac{1}{\sqrt{5}}F(\sigma_1+it,\rho_0)\geq \frac{1}{\sigma-\beta_0}.
\end{equation}
Applying the bounds \eqref{eqn:sum-zeros-upper-bound} and \eqref{eqn:lower-bound-F-0} to the right hand side of  \eqref{eq: Intermediate Bound for jf(sigma,t)} yields
\begin{align*} 
    j_f(\sigma, t) \leq 
        \dfrac{\kappa}{2} \log N - \kappa \log(2\pi) + M\left(\dfrac{k-1}{2},1,t\right) + C\left(\dfrac{k-1}{2},1,t\right) -\dfrac{1}{\sigma-\beta_0}.
\end{align*}
Using Corollary \ref{cor: Gamma Bound with M and C Functions} in the case $\abs{t}\geq1$ case, we see that 
\begin{equation*}\label{eq: Main Term for j_f(sigma,t) for t large}
    M\left(\frac{k-1}{2},1,t\right)= \kappa\log\abs{t}+\kappa\log(k)
\end{equation*}
and  
\begin{align*}\label{eq: Error Term for j_f(sigma,t) for t large}
 - \kappa \log(2\pi)+ C\left(\dfrac{k-1}{2},1,t\right)&\leq - \kappa \log(2\pi)+  \dfrac{1}{2(1+\frac{k-1}{2})} +  \dfrac{1}{2\sqrt{5}(\phi + \frac{k-1}{2})} + \frac{\kappa}{2} \log\Bigl( \frac{1}{k^2} + \Bigl( \frac{\phi + \frac{k-1}{2}}{k t} \Bigr)^2 \Bigr) \\&\leq -0.48973.
\end{align*}
Hence,
\[ j_f(\sigma, t) \leq \frac{\kappa}{2}\log N+\kappa\log\abs{t}+\kappa\log(k)-\frac{1}{\sigma-\beta_0}-0.48973 \]
as desired. For $\abs{t}<1$, we apply Corollary \ref{cor: Gamma Bound with M and C Functions} again to get
\begin{equation*}\label{eq: Main Term for j_f(sigma,t) for t small but not too small}
     M\left(\dfrac{k-1}{2},1,t\right)=\kappa\log(k)
\end{equation*}
and 
\begin{align*}
 - \kappa \log(2\pi)+ C\left(\dfrac{k-1}{2},1,t\right)   &\leq - \kappa \log(2\pi)- \kappa \dfrac{2\phi + k-1}{(2\phi + k-1)^2 + 4} +  \dfrac{1}{k+1} +  \dfrac{1}{2\sqrt{5}(\phi + \frac{k-1}{2})} \\&\hspace{2em}+ \frac{\kappa}{2} \log \left(\frac{\left( \phi + \frac{k-1}{2}\right)^2 + 1}{k^2}\right)\\& \leq -0.596438 .
\end{align*}

In order to handle the case where $t=0$ and $\abs{t_0}<1$,
we observe that
\begin{align}\label{eqn:sum-zeros-jf(sigma,0)}
    - \Re \left( \sum_\rho \frac{1}{\sigma-\rho} - \frac1{\sqrt{5}} \sum_\rho \frac1{\sigma_1-\rho} \right)& \leq 
        - \left( F(\sigma , \rho_{0}) - \frac{1}{\sqrt{5}} F(\sigma_1 ,\rho_{0})+F(\sigma,\overline{\rho_0})-\frac{1}{\sqrt{5}}F(\sigma_1,\overline{\rho_0}) \right)\nonumber\\&\leq  -\Re\left(\frac{1}{\sigma-\rho_0}+\frac{1}{\sigma-\overline{\rho_0}}\right)+1.2,
\end{align}
where the last inequality follows from Lemma~\ref{lem:eq22mccurley}.
Setting $t=0$  in \eqref{eq: Intermediate Bound for jf(sigma,t)} and applying estimate \eqref{eqn:sum-zeros-jf(sigma,0)}, we get
\begin{align}
    \begin{split}\label{eq: Intermediate Bound for jf(sigma,0)}
    j_f(\sigma,0) 
    \leq{}& \frac{\kappa}{2} \log N - \kappa \log(2\pi) + M \Bigl(\frac{k-1}{2},1, 0 \Bigr) + C \Bigl(\frac{k-1}{2},1, 0 \Bigr)  \\
    &-2\frac{\sigma-\beta_0}{(\sigma-\beta_0)^2+t_0^2}+1.2.
    \end{split}
\end{align}
The desired upper bound is achieved by applying 
Corollary \ref{cor: Gamma Bound with M and C Functions} to bound $M \Bigl(\frac{k-1}{2},1, 0 \Bigr) + C \Bigl(\frac{k-1}{2},1, 0 \Bigr)$. \qedhere
\end{proof}

\subsection{Bounding \texorpdfstring{$j_{f,2}(\sigma,mt)$}{jf2(sigma,mt)}}

\begin{lem}\label{lem: j_f2(sigma,mt) bound}
Let $m=0$ or $2$, and $1<\sigma<1.15$.  Then we have 

\begin{equation}\label{jf2mt-bound-simplified}
    j_{f,2}(\sigma,mt) \leq j_{f ,2}^{\text{Main}}(\sigma,mt) + j_{f ,2}^{\text{Error}}(\sigma,mt),
    \end{equation}
where
\begin{equation}\label{jf2mt-bound-main}
j_{f,2}^{\text{Main}}(\sigma,mt) = \kappa \log N + \kappa \log k + \begin{cases}\frac{1}{\sigma - 1} &\text{ if }m=0\\
\frac{\sigma - 1}{(\sigma - 1)^2 + 4t^2} &\text{ if }m=2,\,\abs{t} < 1\\
2\kappa \log \abs{t}&\text{ if }m=2,\,\abs{t} \geq 1\\
\end{cases}
\end{equation}
and
\begin{equation}\label{jf2mt-bound-error}
\begin{split}
j_{f,2}^{\text{Error}}(\sigma,mt)
 = \begin{cases}{-1.66308} -s(N) &\text{ if }m=0\\
{-0.700692} + T(N,1) &\text{ if }m=2,\,\abs{t} < 1\\
{-0.292925} + T(N,1) &\text{ if }m=2,\,\abs{t} \geq 1,\\
\end{cases}
\end{split}
\end{equation}
where 
\[
    s(N) = \sum_{p\mid N} \log\,p \left(\frac{1}{p^{\sigma} - 1} - \frac{1}{\sqrt{5}(p^{\sigma_1} - 1)}\right),
\]
and
\[
    T(N,1) = \sum_{p\mid N}\log p\left(\frac{1}{p^{\sigma} - 1} + \frac{1}{\sqrt{5}(p^{\sigma_1} - 1)}\right). 
\]

\end{lem}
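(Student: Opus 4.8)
The plan is to split the logarithmic derivative according to the definition $L_{(2)}(s,f)=L(s,\chi_0)L(s,\sym^2 f)$ and to use $L(s,\chi_0)=\zeta(s)\prod_{p\mid N}(1-p^{-s})$, so that
\[
-\frac{L_{(2)}'}{L_{(2)}}(s,f)=-\frac{\zeta'}{\zeta}(s)-\sum_{p\mid N}\frac{\log p}{p^s-1}-\frac{L'}{L}(s,\sym^2 f).
\]
Writing $s=\sigma+imt$, $s_1=\sigma_1+imt$ and taking real parts, $j_{f,2}(\sigma,mt)$ becomes a sum of three of Ste\v{c}kin's differences: one for $\zeta$, one for the finite Euler product over $p\mid N$, and one for $L(s,\sym^2 f)$. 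For the first, when $m=0$ it is exactly $j_{f,0}(\sigma,0)$ and Lemma \ref{lem: j_f0(sigma,0) bound} applies; when $m=2$ I would instead invoke the classical Hadamard formula \eqref{classical-Hadamard} for $\zeta$ at $s$ and $s_1$. For the third I would apply the Hadamard formula \eqref{Hadamard-Symf} for $L(s,\sym^2 f)$ at $s$ and $s_1$. In both Hadamard formulas, pairing each zero $\rho$ with $1-\overline\rho$ (another zero, by the functional equation; here one uses the standard fact that $\zeta$ and $L(s,\sym^2 f)$ are non-vanishing on $\Re(s)=1$, so all zeros lie in $0<\Re(s)<1$) and invoking Lemma \ref{lem:Stechkin}(a) shows that the contribution of the zeros to each Ste\v{c}kin difference is $\le 0$ and may be discarded. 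What survives as genuine main terms is $(1-\tfrac1{\sqrt5})\log N=\kappa\log N$ from the conductor of $\sym^2 f$; the pole term $\Re\frac1{s-1}$ from $\zeta$, which equals $\frac1{\sigma-1}$ if $m=0$ and $\frac{\sigma-1}{(\sigma-1)^2+4t^2}$ if $m=2$ (its companion $-\tfrac1{\sqrt5}\Re\frac1{s_1-1}\le 0$ being dropped into the error); and the Ste\v{c}kin differences of the archimedean factors.

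For the archimedean factors, $\gamma(s,\sym^2 f)=\Gamma_{\mathbb R}(s+1)\Gamma_{\mathbb C}(s+k-1)$ gives $\frac{\gamma'}{\gamma}(s,\sym^2 f)=-\tfrac12\log\pi+\tfrac12\frac{\Gamma'}{\Gamma}\!\left(\tfrac{s+1}2\right)-\log(2\pi)+\frac{\Gamma'}{\Gamma}(s+k-1)$, and the $\zeta$-factor contributes $-\tfrac12\log\pi+\tfrac12\frac{\Gamma'}{\Gamma}\!\left(\tfrac{s+2}2\right)$. The piece $\frac{\Gamma'}{\Gamma}(s+k-1)-\tfrac1{\sqrt5}\frac{\Gamma'}{\Gamma}(s_1+k-1)$ is exactly of the form handled by Corollary \ref{cor: Gamma Bound with M and C Functions} with $a=k-1$, producing the main term $\kappa\log k$ (and an extra $\kappa\log|t|$ when $|t|\ge1$) together with a controlled error. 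The two remaining digamma pieces — at $\tfrac{s+1}2$ and $\tfrac{s+2}2$, whose imaginary part is $mt/2$ rather than $mt$, hence not of the form required by the Corollary — I would bound directly from \eqref{eqn:digamma-expansion}, taking an upper bound for $\Re\frac{\Gamma'}{\Gamma}$ at the $\sigma$-point and a lower bound at the $\sigma_1$-point (using that the real parts exceed $1$ and that $\log|w|-\tfrac1{\sqrt5}\log|w_1|\le\kappa\log|w|$ when $0<\Re(w)\le\Re(w_1)$): this gives an $O(1)$ contribution when $|t|<1$ and a $\tfrac\kappa2\log|t|+O(1)$ contribution from each when $|t|\ge1$. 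For the finite Euler product, $-\Re\frac{\log p}{p^s-1}\le\frac{\log p}{|p^s-1|}\le\frac{\log p}{p^\sigma-1}$ since $|p^\sigma e^{i\theta}-1|\ge p^\sigma-1$, and likewise at $s_1$; for $m=0$ this sums, with equality, to $-s(N)$, and for $m=2$ to $+T(N,1)$.

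Collecting the main terms reproduces $j_{f,2}^{\text{Main}}(\sigma,mt)$ as in \eqref{jf2mt-bound-main}: $\kappa\log N+\kappa\log k$ in every case, plus $\frac1{\sigma-1}$ when $m=0$, $\frac{\sigma-1}{(\sigma-1)^2+4t^2}$ when $m=2,|t|<1$, and $2\kappa\log|t|$ when $m=2,|t|\ge1$ (the latter assembling $\kappa\log|t|$ from the $\Gamma_{\mathbb C}$-factor of $\sym^2 f$ and $\tfrac\kappa2\log|t|$ from each of the two half-height digamma pieces). The error term is then the sum of the constant $-\tfrac12\kappa\log\pi-\kappa\log(2\pi)$ from the $\sym^2 f$ gamma factor; the constant $-0.601655$ from Lemma \ref{lem: j_f0(sigma,0) bound} when $m=0$, or $-\tfrac12\kappa\log\pi$ from \eqref{classical-Hadamard} when $m=2$; all the $\sigma$- and $k$-dependent leftovers from the digamma estimates and from Corollary \ref{cor: Gamma Bound with M and C Functions}; and $-s(N)$ or $+T(N,1)$. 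Bounding these uniformly over $1<\sigma<1.15$, even $k\ge2$, and the appropriate range of $t$ yields the stated numerical values $-1.66308$, $-0.700692$, $-0.292925$. I expect the main obstacle to be exactly this last bookkeeping: the gamma/digamma error terms are not monotone in $k$ (largest for small $k$ in some terms, approaching a nonzero limit as $k\to\infty$ in others), so several regimes must be checked, and one must take care not to double-count $\Re\frac1{s-1}$, which is retained as a main term when $m=0$ and when $m=2,|t|<1$ but must be absorbed into the error (via $\frac{\sigma-1}{(\sigma-1)^2+4t^2}\le\frac{\sigma-1}{4}<0.04$) when $m=2,|t|\ge1$. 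The tighter restriction $\sigma<1.15$, rather than $\sigma<\phi$, is precisely what keeps the $\sigma$-dependent leftovers small enough to reach the advertised constants.
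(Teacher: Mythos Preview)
Your approach is essentially the paper's: decompose into the $L(s,\chi_0)$ and $L(s,\sym^2 f)$ pieces, discard the zero sums via Lemma~\ref{lem:Stechkin}(a), handle $\frac{\Gamma'}{\Gamma}(s+k-1)$ via Corollary~\ref{cor: Gamma Bound with M and C Functions}, and sum the leftovers. The only procedural difference is that the paper does not rederive the $L(s,\chi_0)$ Ste\v{c}kin difference or the half-height digamma term $\tfrac12\frac{\Gamma'}{\Gamma}\bigl(\tfrac{s+1}{2}\bigr)$ by hand but quotes McCurley's Lemmas~1, 2, 3, 7 directly.

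One numerical caveat: for $m=0$, invoking Lemma~\ref{lem: j_f0(sigma,0) bound} verbatim (proved under $\sigma<\phi$, yielding $-0.601655$) leaves you roughly $0.3$ short of the target $-1.66308$. McCurley's Lemma~3, valid under the tighter range $1<\sigma<1.15$, gives the sharper bound $\frac{1}{\sigma-1}-0.8973-s(N)$ for the $L(s,\chi_0)$ piece, and that extra $\approx 0.296$ is exactly what closes the gap. So to land on the stated constant you cannot simply plug in Lemma~\ref{lem: j_f0(sigma,0) bound}; you must either cite McCurley or redo the $\zeta$-estimate under $1<\sigma<1.15$ with a sharper digamma bound than the linear interpolation used in that lemma's proof.
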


\begin{proof}

By \eqref{eqn:def-power-l-fns} we have
\begin{equation}\label{jf2m}
    \begin{split}
        j_{f ,2}(\sigma, mt)={}& \Re \left(\frac{1}{\sqrt{5}}\frac{L'}{L}(\sigma_1 + imt,\chi_0) - \frac{L'}{L}(\sigma + imt,\chi_0)\right)\\
        & + \Re \left(\frac{1}{\sqrt{5}}\frac{L'}{L}(\sigma_1 + imt,\sym^2f) - \frac{L'}{L}(\sigma + imt,\sym^2f)\right).
    \end{split}
\end{equation}
Assume that $1 < \sigma < 1.15$.  Using \cite[Lemma 3 and Lemma 7]{McCurley}, we get
\begin{equation}\label{Dirichlet-principal,m=0,2}
    \begin{split}
  &\Re \left(\frac{1}{\sqrt{5}}\frac{L'}{L}(\sigma_1 + imt,\chi_0) - \frac{L'}{L}(\sigma + imt,\chi_0)\right)   \\
  < &\begin{cases}
  \frac{1}{\sigma - 1} - 0.8973 - s(N)&\text{ if }m=0\\
      \frac{\sigma -1}{(\sigma - 1)^2 + 4t^2} + 0.1565 - \frac{\kappa}{2} \log \pi + T(N,1) &\text{ if } m = 2,\,\abs{t} < 1\\
      \frac{\kappa}{2} \log \abs{t} + 0.3530 - \frac{\kappa}{2} \log \pi + T(N,1) &\text{ if }m = 2,\,\abs{t} \geq 1.
  \end{cases}
    \end{split}
\end{equation}
Recall, by \eqref{Hadamard-Symf},
\begin{equation*}
\Re\left(\frac{L'}{L}\left(s, \sym^2f\right)\right) = -\log N -\Re\left(\frac{\gamma'}{\gamma}\left(s, \sym^2f\right)\right) +  \Re\left(\sum_{\rho}\frac{1}{s- \rho}\right),
\end{equation*}
where the last sum  runs over the zeros $\rho$ of $L(s,\sym^2 f)=0$ with $0<\Re(\rho)<1$. It follows by Lemma \ref{lem:Stechkin} that
\begin{equation*}
    \begin{split}
        &\Re \left(\frac{1}{\sqrt{5}}\frac{L'}{L}(\sigma_1 + imt,\sym^2f) - \frac{L'}{L}(\sigma + imt,\sym^2f)\right)\\
        &< \kappa \log N + \Re\left(\frac{\gamma'}{\gamma}\left(\sigma + imt , \sym^2f\right)\right) - \frac{1}{\sqrt{5}}\Re\left(\frac{\gamma'}{\gamma}\left(\sigma_1 + imt , \sym^2f\right)\right).\\
        \end{split}
        \end{equation*}
Applying \eqref{Gamma-symf}, we can write
\[
    \frac{\gamma'}{\gamma}\left(s , \sym^2f\right) = - \frac{\log \pi}{2} - \log (2\pi) + \frac12\frac{\Gamma'}{\Gamma}\left(\frac{s+1}{2}\right) + \frac{\Gamma'}{\Gamma}\left(s + k-1\right).
\]
By \cite[Lemma 1 and Lemma 2]{McCurley}, we have
\begin{equation}\label{digamma-1-m=0,2}
    \begin{split}
        &\frac{1}{2} \Re\left(\frac{\Gamma'}{\Gamma}\left(\frac{\sigma + imt + 1}{2}\right) - \frac{1}{\sqrt{5}}\frac{\Gamma'}{\Gamma}\left(\frac{\sigma_1 + imt + 1}{2}\right)\right)\\
        < & 
        \begin{cases}
            0.2469 &\text{ if }m=0,\,\\
            0.2469 &\text{ if }m=2,\,\abs{t} < 1\\
            0.3915 + \frac{\kappa}{2} \log \abs{t} &\text{ if }m = 2,\,\abs{t} \geq 1.
        \end{cases}
        \end{split}
\end{equation}
By Corollary \ref{cor: Gamma Bound with M and C Functions}, we also have
\begin{equation}\label{digamma-2-m=0,2}
        \frac{\Gamma'}{\Gamma}(\sigma + imt + k-1) - \frac{1}{\sqrt{5}}\frac{\Gamma'}{\Gamma}(\sigma_1 + imt + k-1)\\
        < M(k-1,m,t) + C(k-1,m,t).
\end{equation}
Thus, combining equations \eqref{jf2m}, \eqref{Dirichlet-principal,m=0,2}, \eqref{digamma-1-m=0,2} and \eqref{digamma-2-m=0,2}, we conclude that if $1 < \sigma < 1.15$, then  

\begin{equation}\label{jf2m-bound}
     j_{f,2}(\sigma,mt) < \kappa \log N - \frac{\kappa}{2} \log \pi - \kappa \log (2\pi) +  M(k-1,m,t) + C(k-1,m,t)+\ell(\sigma,m,t),
 \end{equation}
 where
 \begin{equation}\label{lmt}
 \ell(\sigma,m,t)=\begin{cases}
      \frac{1}{\sigma-1} - s(N) -0.6504 &\text{ if }m=0\\
        \frac{\sigma-1}{{(\sigma - 1)^2 + 4t^2}} + 0.0870044 + T(N,1)&\text{ if }m=2,\,\abs{t} < 1\\
        \kappa \log \abs{t} + 0.428104 + T(N,1) &\text{ if }m=2,\,\abs{t} \geq 1.\\
     \end{cases}
 \end{equation}

 The proof of the lemma concludes by applying the bounds for $M(k-1,m,t)$ and $C(k-1,m,t)$  from Corollary \ref{cor: Gamma Bound with M and C Functions} in each of the cases to get the desired values for $j_{f ,2}^{\text{Main}}(\sigma,mt)$ and $j_{f,2}^{\text{Error}}(\sigma,mt)$.

\end{proof}

\begin{remark} We will need an upper bound for $j_{f,2}(\sigma,0)$ when allowing $\sigma$ to go all the way up to $\phi$. In this case, it suffices to observe that  if $1<\sigma<\phi$, then
\begin{equation}\label{eqn:jf2mt-bound-error-sigma-phi}
    j_{f,2}^{\text{Error}}(\sigma,0) = -1.36737- s(N).
\end{equation}
\end{remark}
\subsection{Bounding \texorpdfstring{$j_{f,3}(\sigma,t)$}{jf3(sigma,t)}}

\begin{lem}\label{lem: j_f3(sigma,t) bound}
    Let $\beta_0 + it_{0}$  be a zero of $L(s,f)$ such that $\beta_0 > 1/2$.
    Then for $t = t_0$, we have
    \begin{equation*}\label{j_f3(sigma,t) bounds-more}
    j_{f,3}(\sigma, t)\\
    \leq j_{f,3}^{\text{Main}}(\sigma, t) + j_{f,3}^{\text{Error}}(\sigma, t),
    \end{equation*}
    where
    
  \begin{equation*}\label{j_f3(sigma,t) bounds-more-main}
  j_{f,3}^{\text{Main}}(\sigma, t) = \begin{cases}
       \frac{5}{2}\kappa\log N+4\kappa\log\abs{t}+4\kappa\log(k)-\frac{2}{\sigma-\beta_0}   &\text{ if }\abs{t} \geq 1\\
       \frac{5}{2}\kappa\log N+4\kappa\log(k)-\frac{2}{\sigma-\beta_0} &\text{ if }\abs{t} < 1,\\
    \end{cases}\\
  \end{equation*}
  and 
 \begin{equation*}\label{j_f3(sigma,t) bounds-more-error}
 \begin{split}
  j_{f,3}^{\text{Error}}(\sigma, t) & =  \begin{cases}
    -1.9409 + R(N) &\text{ if }\abs{t} \geq 1\\
    -2.3414 + R(N)&\text{ if }\abs{t} < 1,\\
    \end{cases}    
\end{split}
\end{equation*}
where 
\[
    R(N) = \sum_{p\mid N} 2\log p \left(\frac{1}{p^{\sigma + 1/2} - 1} + \frac{1}{\sqrt{5}(p^{\sigma_1 + 1/2} - 1)}\right).
\]
\end{lem}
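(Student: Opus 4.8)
\emph{Proof sketch.} Throughout assume $1<\sigma<\phi$. The plan is to split the logarithmic derivative of $L_{(3)}(s,f)$ according to its definition \eqref{eqn:def-power-l-fns},
\[
\frac{L_{(3)}'}{L_{(3)}}(s,f) = \frac{L'}{L}(s,\sym^3 f) + 2\,\frac{L'}{L}(s,f) + \frac{d}{ds}\log\prod_{p\mid N}\Bigl(1-\frac{\alpha_1(p)}{p^s}\Bigr)^{2},
\]
so that $j_{f,3}(\sigma,t)$ is the sum of the Ste\v{c}kin differences coming from the three terms on the right. For the $\sym^3 f$ term I would use the Hadamard factorization \eqref{Hadamard-Symf}: its Ste\v{c}kin difference equals $\tfrac32\kappa\log N$ plus the Ste\v{c}kin difference of $\frac{\gamma'}{\gamma}(\cdot,\sym^3 f)$ minus the Ste\v{c}kin difference of the sum over the zeros of $L(s,\sym^3 f)$, and the latter pairs $\rho\leftrightarrow 1-\overline{\rho}$ and is nonnegative by Lemma~\ref{lem:Stechkin}(a), so it may be discarded (no zero of $L(s,\sym^3 f)$ is isolated here). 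Since $m=3$ is odd with $n=1$, \eqref{Gamma-symf} gives $\gamma(s,\sym^3 f)=\Gamma_{\mathbb{C}}(s+\tfrac{k-1}{2})\Gamma_{\mathbb{C}}(s+\tfrac{3(k-1)}{2})$, and $\frac{\Gamma_{\mathbb{C}}'}{\Gamma_{\mathbb{C}}}(z)=-\log(2\pi)+\frac{\Gamma'}{\Gamma}(z)$; hence this Ste\v{c}kin difference is at most $\tfrac32\kappa\log N-2\kappa\log(2\pi)$ plus the two digamma Ste\v{c}kin differences with shifts $a=\tfrac{k-1}{2}$ and $a=\tfrac{3(k-1)}{2}$, both linear in $k$, which I bound by Corollary~\ref{cor: Gamma Bound with M and C Functions} with $m=1$.

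The $L(s,f)^2$ term contributes exactly $2\,j_f(\sigma,t)$, which I handle by invoking Lemma~\ref{lem: j_f(sigma,t) bound} in the $t=t_0$ case; this isolates the zero $\beta_0+it_0$ and produces the term $-\tfrac{2}{\sigma-\beta_0}$ of the asserted main term together with the constants $2\,j_f^{\mathrm{Error}}(\sigma,t)$. For the finite product over $p\mid N$, logarithmic differentiation gives $2\sum_{p\mid N}\frac{\alpha_1(p)(\log p)p^{-s}}{1-\alpha_1(p)p^{-s}}$; since $N$ is squarefree, the Atkin--Lehner--Li theory together with Deligne's bound yields $\abs{\alpha_1(p)}=p^{-1/2}$ for every $p\mid N$, so $\Bigl|\frac{\alpha_1(p)p^{-s}}{1-\alpha_1(p)p^{-s}}\Bigr|\leq\frac{1}{p^{\sigma+1/2}-1}$ (and similarly at $s_1$), whence the third Ste\v{c}kin difference is bounded above, in absolute value, by $R(N)$.

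Adding the three bounds, the logarithmic main terms — $\tfrac32\kappa\log N$ from $\sym^3 f$, the two copies of $M(\cdot,1,t)$ from its gamma factor, and $2\,j_f^{\mathrm{Main}}(\sigma,t)$ — collapse precisely to $j_{f,3}^{\mathrm{Main}}(\sigma,t)$ in both the $\abs{t}\geq1$ and $\abs{t}<1$ cases, while everything else forms the candidate error $-2\kappa\log(2\pi)+C(\tfrac{k-1}{2},1,t)+C(\tfrac{3(k-1)}{2},1,t)+2\,j_f^{\mathrm{Error}}(\sigma,t)+R(N)$. It then remains to verify that the $N$-independent part of this is at most $-1.9409$ when $\abs{t}\geq1$ and at most $-2.3414$ when $\abs{t}<1$. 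I expect this final numerical step to be \emph{the main obstacle}: one must check that $C(\tfrac{k-1}{2},1,t)$ and $C(\tfrac{3(k-1)}{2},1,t)$ are decreasing in $\abs{t}$ on the relevant range and, because the argument of their $\tfrac{\kappa}{2}\log(\cdots)$ term carries a factor $1/k^2$ while the remaining pieces tend to $0$, are eventually decreasing in the even integer $k$; one then confirms numerically that the extreme case is $k=2$ (with $\abs{t}=1$ in the first regime), where the stated constants are met with essentially no margin.
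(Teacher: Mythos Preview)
Your proposal is correct and follows essentially the same route as the paper. The only organizational difference is that the paper unpacks the Hadamard factorization for $L(s,f)^2$ and $L(s,\sym^3 f)$ simultaneously (obtaining the combined digamma contribution $3\frac{\Gamma'}{\Gamma}(s+\tfrac{k-1}{2})+\frac{\Gamma'}{\Gamma}(s+\tfrac{3(k-1)}{2})$ and then applying Corollary~\ref{cor: Gamma Bound with M and C Functions} directly), whereas you quote Lemma~\ref{lem: j_f(sigma,t) bound} as a black box for the $2j_f(\sigma,t)$ piece; since the worst case for both pieces occurs at the same point $(k,|t|)=(2,1)$ (resp.\ $k=2$ for $|t|<1$), your two-step bound recovers exactly the constants $-1.9409$ and $-2.3414$.
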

\begin{proof}
Taking the negative of the real part of the logarithmic derivative of $L_{(3)}(s,f)$ in \eqref{eqn:def-power-l-fns}, we see that
\begin{align*}
    -\Re\frac{L_{(3)}'}{L_{(3)}}\left(s,f\right)&=-2\Re\frac{L'}{L}(s,f) -\Re\frac{L'}{L}(s,\sym^3f) - \Re\left(\sum_{p\mid N}\frac{2 \log p}{p^{s} \alpha_1(p)^{-1} - 1}\right).
    \end{align*}
By \eqref{Gamma-symf}, we know that 
\[
\gamma(s, \sym^3 f) = 4 (2 \pi)^{-2s - 2k + 2} \Gamma\left(s+\frac{k-1}{2}\right)\Gamma\left(s+\frac{3}{2}(k-1)\right).
\]
Using \eqref{Hadamard-Symf} for $L(s,f)$ and $L(s,\sym^3 f)$, we get 
\begin{align}
\begin{split}\label{eqn:re-log-der-L3}
    -\Re\frac{L_{(3)}'}{L_{(3)}}\left(s,f\right)={}&\frac{5}{2}\log N -4\log(2\pi)+3\Re\frac{\Gamma'}{\Gamma}\left(s+\frac{k-1}{2}\right) + \Re\frac{\Gamma'}{\Gamma}\left(s+\frac{3}{2}(k-1)\right)\\ &-2\Re\left(\sum_{\substack{0<\Re\rho<1\\ L(\rho,f)=0}}\frac{1}{s-\rho}\right)-\Re\left(\sum_{\substack{0<\Re\rho<1\\ L(\rho,\sym^3 f)=0}}\frac{1}{s-\rho}\right)\\
     & - \Re\left(\sum_{p\mid N}\frac{2 \log p}{p^{s} \alpha_1(p)^{-1} - 1}\right).
     \end{split}
\end{align}
If $p \mid N$, then  $\alpha_1(p) = \pm 1/\sqrt p$, and so
\begin{equation}\label{eqn:ramified-sum-j3}
    \Re\left(\frac{1}{p^{s}\alpha_1(p)^{-1} - 1}\right) < \abs*{\frac{1}{p^{s}\lambda_f(p)^{-1} - 1}} < \frac{1}{p^{\Re(s) + 1/2} - 1}.
\end{equation}
After applying \eqref{eqn:ramified-sum-j3}, Lemma \ref{lem:Stechkin} (a), and Lemma \ref{lem:genLem1&2MC} in \eqref{eqn:re-log-der-L3}, we arrive to the following bound
 \begin{align}
    \begin{split}\label{eq: Intermediate Bound for jf3(sigma,mt)}
    j_{f,3}(\sigma,mt)  \leq{}& \frac{5}{2}\kappa\log N - 4\kappa\log(2\pi) + R(N)+3M\left(\frac{k-1}{2},m,t\right)\\
    & + 3C\left(\frac{k-1}{2},m,t\right)+M\left(\frac{3}{2}(k-1),m,t\right)+C\left(\frac{3}{2}(k-1),m,t\right)\\
    &-2 \Bigl( F(\sigma+imt,\rho_0)-\frac{1}{\sqrt{5}}F(\sigma_1+imt,\rho_0)\Bigr).
    \end{split}
\end{align}
For the rest of the proof, we set $m=1$, and we recall that $t=t_0$. Employing  Lemma \ref{lem:Stechkin} (b) in precisely the same way we did in Subsection~\ref{sec:jf} gives us the upper bound
\begin{align}
\begin{split}\label{eqn-j3-bound-inter}
j_{f,3}(\sigma,mt)  &\leq\frac{5}{2}\kappa\log N+3M\left(\frac{k-1}{2},1,t\right)+M\left(\frac{3}{2}(k-1),1,t\right)-\frac{2}{\sigma-\beta_0}\\&\hspace{2em}-4\kappa\log(2\pi)+3C\left(\frac{k-1}{2},1,t\right)+C\left(\frac{3}{2}(k-1),1,t\right) + R(N).
\end{split}
\end{align}
The lemma concludes by applying into \eqref{eqn-j3-bound-inter} the bounds from Corollary \ref{cor: Gamma Bound with M and C Functions} for $\abs{t}\leq1$ and $\abs{t}>1$.
\end{proof}

\subsection{Bounding \texorpdfstring{$j_{f,3}(\sigma,3t)$}{jf3(sigma,3t)}}

\begin{lem}\label{lem: j_f3(sigma,3t) bound}
    Let $\rho_0 = \beta_0 + i t_0$ be a zero of $L(s, f)$ with $1/2 < \beta_0 < 1$. 
    Then for $t = t_0$, we have
    
\begin{equation*}\label{j_f3(sigma,3t) bounds-more}
    j_{f,3}(\sigma, 3t)\\
    \leq j_{f,3}^{\text{Main}}(\sigma, 3t) + j_{f,3}^{\text{Error}}(\sigma, 3t),
    \end{equation*}
    where
    
  \begin{equation*}\label{j_f3(sigma,3t) bounds-more-main}
  j_{f,3}^{\text{Main}}(\sigma, 3t) = \begin{cases}
       \frac{5}{2}\kappa\log N+4\kappa\log\abs{t}+4\kappa\log(k) &\text{ if }\abs{t} \geq 1\\
       \frac{5}{2}\kappa\log N+4\kappa\log(k)-\frac{2(\sigma - \beta_0)}{(\sigma-\beta_0)^2 + 4t^2} &\text{ if }\abs{t} < 1\\
    \end{cases}\\
  \end{equation*}
  and 
 \begin{equation*}\label{j_f3(sigma,3t) bounds-more-error}
 \begin{split}
  j_{f,3}^{\text{Error}}(\sigma, 3t) & =  \begin{cases}
    1.20626 + R(N) &\text{ if }\abs{t} \geq 1\\
    1.03 + R(N)&\text{ if }\abs{t} < 1.\\
    \end{cases}    
\end{split}
\end{equation*}
\end{lem}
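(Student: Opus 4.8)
The plan is to re-enter the argument of the previous lemma at the intermediate estimate \eqref{eq: Intermediate Bound for jf3(sigma,mt)}, which was derived there for an arbitrary $m\geq1$ before the specialization $m=1$ was made; here I would specialize instead to $m=3$. In that estimate the single zero $\rho_0$ of $L(s,f)$ — which occurs in $L_{(3)}(s,f)$ with multiplicity $2$ — has already been isolated, all other zeros of $L(s,f)$ and all zeros of $L(s,\sym^3f)$ having been discarded via Lemma \ref{lem:Stechkin}~(a). Thus it remains only to control $-2\bigl(F(\sigma+i3t,\rho_0)-\tfrac1{\sqrt5}F(\sigma_1+i3t,\rho_0)\bigr)$ and to insert the appropriate values of $M(a,3,t)$ and $C(a,3,t)$ from Corollary \ref{cor: Gamma Bound with M and C Functions} with $a=\tfrac{k-1}{2}$ and $a=\tfrac32(k-1)$. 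In each relevant case the four copies $3M(\tfrac{k-1}{2},3,t)+M(\tfrac32(k-1),3,t)$ collapse to $4\kappa\log k$ when $\abs t<1$ and to $4\kappa\log\abs t+4\kappa\log k$ when $\abs t\geq1$, which accounts for the stated main terms modulo the zero contribution.

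When $\abs t\geq1$ I would simply discard $\rho_0$ as well: Lemma \ref{lem:Stechkin}~(a) gives $F(\sigma+i3t,\rho_0)-\tfrac1{\sqrt5}F(\sigma_1+i3t,\rho_0)\geq0$, so the zero term is $\leq0$ and disappears — which is exactly why the main term carries no $\frac1{\sigma-\beta_0}$ in this range. Inserting the $M$- and $C$-values for the case $m\neq0$, $a\neq0$, $\abs t\geq1$ then yields the claimed main term together with the constant $-4\kappa\log(2\pi)+3C(\tfrac{k-1}{2},3,t)+C(\tfrac32(k-1),3,t)$, which one checks is $\leq1.20626$ uniformly over even $k\geq2$ and $\abs t\geq1$.

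When $\abs t<1$ the zero must be retained, and the crucial observation is that $\Im(\sigma+i3t)=3t=3\,\Im(\rho_0)$, so Lemma \ref{prop:stechkin with 3t} applies: since $\tfrac12<\beta_0<1$,
\[
\Re\!\left(\frac{1}{\sigma+i3t-1+\overline{\rho_0}}\right)-\frac{1}{\sqrt5}F(\sigma_1+i3t,\rho_0)\;\geq\;-\frac{\phi}{\sqrt5}-\frac25\;\geq\;-1.12361 .
\]
Adding $\Re\!\bigl(\tfrac{1}{\sigma+i3t-\rho_0}\bigr)=\tfrac{\sigma-\beta_0}{(\sigma-\beta_0)^2+4t^2}>0$ to both sides gives $F(\sigma+i3t,\rho_0)-\tfrac1{\sqrt5}F(\sigma_1+i3t,\rho_0)\geq\tfrac{\sigma-\beta_0}{(\sigma-\beta_0)^2+4t^2}-1.12361$, hence the zero term satisfies $-2(\cdots)\leq-\tfrac{2(\sigma-\beta_0)}{(\sigma-\beta_0)^2+4t^2}+2.24722$. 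Feeding this into \eqref{eq: Intermediate Bound for jf3(sigma,mt)} together with the $M$- and $C$-values for the case $m\neq0$, $a\neq0$, $\abs t<1$ produces the main term $\tfrac52\kappa\log N+4\kappa\log k-\tfrac{2(\sigma-\beta_0)}{(\sigma-\beta_0)^2+4t^2}$, and the residual constant is checked to be $\leq1.03$, giving the error term $1.03+R(N)$.

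The only genuinely computational step, and the one demanding the most care, is the uniform verification of the two numerical constants $1.20626$ and $1.03$. This reduces to bounding $C(\tfrac{k-1}{2},3,t)$ and $C(\tfrac32(k-1),3,t)$ from above, uniformly over even $k\geq2$ and over the relevant range of $t$; the only non-monotone ingredients are the logarithmic terms $\tfrac{\kappa}{2}\log\!\bigl(\tfrac1{k^2}+(\tfrac{\phi+a}{3kt})^2\bigr)$ (when $\abs t\geq1$) and $\tfrac{\kappa}{2}\log\!\bigl(\tfrac{(\phi+a)^2+9}{k^2}\bigr)$ (when $\abs t<1$), and a short estimate shows that in both cases these, and hence $C(a,3,t)$, are decreasing in $k$, so it suffices to evaluate at $k=2$ (and, when $\abs t\geq1$, at $\abs t=1$).
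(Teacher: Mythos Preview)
Your proposal is correct and follows essentially the paper's approach. The paper in fact applies Lemma~\ref{prop:stechkin with 3t} uniformly to obtain the single intermediate bound \eqref{eq: Intermediate Bound for jf3(sigma,3t)} (with the extra $+2.2473$) in \emph{both} ranges of $\abs t$, and then for $\abs t\geq1$ simply drops the negative term $-\tfrac{2(\sigma-\beta_0)}{(\sigma-\beta_0)^2+4t^2}$; your treatment of $\abs t\geq1$ via Lemma~\ref{lem:Stechkin}~(a) is a slight simplification that avoids the $+2.2473$ altogether and would in fact yield a constant well below $1.20626$, so the claimed inequality certainly holds.
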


\begin{proof}
   We apply \eqref{eq: Intermediate Bound for jf3(sigma,mt)} with $m=3$ to get
   \begin{align*}
    j_{f,3}(\sigma,3t)  &\leq 
    \frac{5}{2}\kappa\log N - 4\kappa\log(2\pi) + R(N)+3M\left(\frac{k-1}{2},3,t\right)\\
    & \hspace{2em}+ 3C\left(\frac{k-1}{2},3,t\right)+M\left(\frac{3}{2}(k-1),3,t\right)+C\left(\frac{3}{2}(k-1),3,t\right)\\
    &\hspace{2em}-2 \Bigl(  \Re\Bigl( \frac{1}{\sigma + 3 i t - \rho_0} \Bigr) + \Re\Bigl( \frac{1}{\sigma - 1 + 3 i t + \overline{\rho_0}} \Bigr)-\frac{1}{\sqrt{5}}F(\sigma_1+i3t,\rho_0)\Bigr).
\end{align*}
Applying Lemma~\ref{prop:stechkin with 3t} to 
    \[
        \Re\Bigl( \frac{1}{\sigma - 1 + 3 i t + \overline{\rho_0}} \Bigr) - \frac{1}{\sqrt{5}} F(\sigma_1 + 3 i t, \rho_0)
    \]
   gives
   \begin{align}
    \begin{split}\label{eq: Intermediate Bound for jf3(sigma,3t)}
    j_{f,3}(\sigma,3t)  &\leq \frac{5}{2}\kappa\log N+3M\left(\frac{k-1}{2},3,t\right)+M\left(\frac{3}{2}(k-1),3,t\right)-\frac{2(\sigma-\beta_0)}{(\sigma-\beta_0)^2+4t^2}\\&\hspace{2em}-4\kappa\log(2\pi)+3C\left(\frac{k-1}{2},3,t\right)+C\left(\frac{3}{2}(k-1),3,t\right)+2.2473 + R(N).
    \end{split}
\end{align}
Applying the bounds for $M$ and $C$ from Corollary \ref{cor: Gamma Bound with M and C Functions} ends the proof.
\end{proof}

\subsection{Bounding \texorpdfstring{$j_{f,4}(\sigma, mt)$}{jf4(sigma,mt)}}

\begin{lem}\label{lem: j_f4(sigma,mt) bound}
Let $1 < \sigma < 1.15$.  Then, we have
\begin{equation*}\label{j_f4(sigma,mt) bounds-more}
    j_{f,4}(\sigma, mt)\\
    \leq j_{f,4}^{\text{Main}}(\sigma, mt) + j_{f,4}^{\text{Error}}(\sigma, mt),
    \end{equation*}
    where
    
  \begin{equation*}\label{j_f4(sigma,mt) bounds-more-main}
  \begin{split}
  &j_{f,4}^{\text{Main}}(\sigma, mt)  = 5\kappa \log N + 5\kappa \log k + \begin{cases}
        \frac{2}{\sigma - 1}  &\text{ if }m=0\\
        \frac{2(\sigma - 1)}{(\sigma - 1)^2 + 4t^2} &\text{ if }m=2,\,\abs{t} < 1\\
            8 \kappa \log\abs{t} &\text{ if }m = 2,\,\abs{t} \geq 1\\
        \frac{2(\sigma - 1)}{(\sigma - 1)^2 + 16t^2} &\text{ if }m=4,\,\abs{t} < 1\\
        8 \kappa \log\abs{t} &\text{ if }m = 4,\,\abs{t} \geq 1,\\    
        \end{cases}
        \end{split}
        \end{equation*}
        and
        
\begin{equation*}\label{j_f4(sigma,mt) bounds-more-error}
\begin{split}
  &j_{f,4}^{\text{Error}}(\sigma, mt)  = \begin{cases}
        {-5.42237} - 2 s(N) - s_1(N) &\text{ if }m=0\\
      {-2.92759} + 2T(N,1) + R(N,1)&\text{ if }m=2,\,\abs{t} < 1\\
           {-1.63101} + 2T(N,1) + R(N,1)&\text{ if }m = 2,\,\abs{t} \geq 1\\
         {-0.0298279} + 2T(N,1) + R(N,1)&\text{ if }m=4,\,\abs{t} < 1\\
        {-0.410404} + 2T(N,1) + R(N,1)&\text{ if }m = 4,\,\abs{t} \geq 1.\\    
        \end{cases}\\
\end{split}
\end{equation*}
Here $s(N)$ and $T(N,1)$ are as defined in Lemma \ref{lem: j_f2(sigma,mt) bound},
\begin{equation*}\label{s_1(N)}
    s_1(N) = \sum_{p\mid N} 3\log\,p \left(\frac{1}{p^{\sigma + 1} - 1} - \frac{1}{\sqrt{5}(p^{\sigma_1 +1} - 1)}\right),
    \end{equation*}
and 
\begin{equation*}\label{R(N,1)}
R(N,1) = \sum_{p\mid N} 3 \log p\left(\frac{1}{p^{\sigma+1} - 1} + \frac{1}{\sqrt{5}(p^{\sigma_1 + 1} - 1)}\right).
\end{equation*}
    
\end{lem}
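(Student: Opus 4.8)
The plan is to mirror the structure of the proofs of Lemmas~\ref{lem: j_f2(sigma,mt) bound} and~\ref{lem: j_f3(sigma,t) bound}, now applied to $L_{(4)}(s,f)$ as defined in~\eqref{eqn:def-power-l-fns}. First I would take the negative of the real part of the logarithmic derivative of $L_{(4)}(s,f) = L(s,\chi_0)^2 L(s,\sym^2 f)^3 L(s,\sym^4 f)\prod_{p\mid N}(1-\alpha_1(p)^2 p^{-s})^3$, obtaining
\begin{align*}
-\Re\frac{L_{(4)}'}{L_{(4)}}(s,f) ={}& -2\Re\frac{L'}{L}(s,\chi_0) - 3\Re\frac{L'}{L}(s,\sym^2 f) - \Re\frac{L'}{L}(s,\sym^4 f)\\
&- 3\Re\sum_{p\mid N}\frac{\log p}{p^s\alpha_1(p)^{-2} - 1}.
\end{align*}
For the ramified primes, since $\alpha_1(p) = \pm 1/\sqrt p$ when $p\mid N$, one has $\alpha_1(p)^{-2} = p$, so $\Re\big(p^s\alpha_1(p)^{-2} - 1\big)^{-1} = \Re\big(p^{s+1} - 1\big)^{-1} < (p^{\Re(s)+1} - 1)^{-1}$, which after forming the Ste\v{c}kin difference produces exactly the term $R(N,1)$ (and, from the $L(s,\chi_0)$ factors handled via McCurley's lemmas, $s_1(N)$ in the $m=0$ case).

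Next I would apply the Hadamard factorization~\eqref{Hadamard-Symf} to $L(s,\sym^2 f)$ and $L(s,\sym^4 f)$, discard the zero sums using Lemma~\ref{lem:Stechkin}~(a) (there is no isolated zero to retain here since we only need a one-sided bound, unlike the $j_{f,1}$ and $j_{f,3}$ cases), and treat the $L(s,\chi_0)$ contributions exactly as in the proof of Lemma~\ref{lem: j_f2(sigma,mt) bound} via \cite[Lemmas~1,~2,~3,~7]{McCurley}, which supply the $\frac{1}{\sigma-1}$, $\frac{\sigma-1}{(\sigma-1)^2+4t^2}$, or $\kappa\log\abs t$ main terms together with explicit constants, the $s(N)$/$T(N,1)$ arithmetic corrections, and the half-integer digamma differences $\frac12\Re\big(\frac{\Gamma'}{\Gamma}(\tfrac{\sigma+imt+1}{2}) - \tfrac1{\sqrt5}\frac{\Gamma'}{\Gamma}(\tfrac{\sigma_1+imt+1}{2})\big)$. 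Using the gamma factor $\gamma(s,\sym^2 f) = \pi^{-(s+1)/2}\Gamma(\tfrac{s+1}{2})\cdot 2(2\pi)^{-(s+k-1)}\Gamma(s+k-1)$ and, from~\eqref{Gamma-symf} with $m=4$ (so $n=2$, $\delta_{2\nmid n}=0$), $\gamma(s,\sym^4 f) = \pi^{-s/2}\Gamma(\tfrac s2)\cdot\prod_{l=1}^2 2(2\pi)^{-(s+l(k-1))}\Gamma(s+l(k-1))$, I would collect all the archimedean pieces: the $\Gamma_{\R}$-type half-arguments via McCurley's Lemmas~1--2, and the $\Gamma_{\C}$-type shifts $\frac{\Gamma'}{\Gamma}(s + l(k-1))$ for $l=1,2$ via Lemma~\ref{lem:genLem1&2MC}/Corollary~\ref{cor: Gamma Bound with M and C Functions} applied with $a = k-1$ and $a = 2(k-1)$ (both linear in $k$, as required).

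I would then bound $M(a,m,t)+C(a,m,t)$ in each of the five regimes ($m=0$; $m=2$ with $\abs t<1$; $m=2$ with $\abs t\geq1$; $m=4$ with $\abs t<1$; $m=4$ with $\abs t\geq1$) using Corollary~\ref{cor: Gamma Bound with M and C Functions}, carefully tracking the multiplicities: $3$ copies of the $a=0$ (from $\sym^2$) and $a = k-1$ shifts, plus one copy each of the $a=0$, $a=k-1$, $a=2(k-1)$ shifts from $\sym^4$, plus the $\chi_0$ contributions. Summing the $\kappa\log N$ pieces gives $3\kappa\log N + 2\kappa\log N = 5\kappa\log N$ (the $\tfrac12\kappa\log N$ from each $\sym^2$ half-gamma and from $\chi_0$ combining appropriately), and likewise the $\kappa\log k$ and $\kappa\log\abs t$ main terms accumulate to the stated $5\kappa\log k$ and $8\kappa\log\abs t$; the constants $-5.42237$, $-2.92759$, $-1.63101$, $-0.0298279$, $-0.410404$ then emerge by numerically maximizing the assembled error term over $1<\sigma<1.15$ and (where relevant) $\abs t$ in the appropriate range, exactly as in the earlier lemmas. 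The main obstacle I anticipate is purely bookkeeping: correctly assembling the large number of gamma-factor pieces with the right multiplicities and signs — in particular keeping the $\chi_0$ analytic conductor (which contributes no $N$ and no $k$) separate from the symmetric-power pieces, and verifying that the $\log N$, $\log k$, $\log\abs t$ coefficients really do collapse to $5\kappa$, $5\kappa$, $8\kappa$ — and then confirming that the residual constants are dominated by the claimed numerical values uniformly over the stated $\sigma$-range.
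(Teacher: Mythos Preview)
Your approach matches the paper's exactly: factor the logarithmic derivative of $L_{(4)}$, bound the two $\chi_0$ pieces via \cite[Lemmas~3 and~7]{McCurley}, discard the $\sym^2$ and $\sym^4$ zero sums via Lemma~\ref{lem:Stechkin}(a), handle the $\Gamma_\R$ half-arguments by \cite[Lemmas~1--2]{McCurley} and the $\Gamma_\C$ shifts (four copies of $a=k-1$, one of $a=2(k-1)$) by Corollary~\ref{cor: Gamma Bound with M and C Functions}, and treat the ramified product exactly for $m=0$ (yielding $-s_1(N)$) or via the triangle inequality for $m\neq 0$ (yielding $R(N,1)$). Two small bookkeeping corrections to your narrative: $s_1(N)$ comes from the ramified Euler factor itself, not from $\chi_0$ (the $\chi_0$ pieces contribute the $-2s(N)$), and the $5\kappa\log N$ arises purely from the conductor terms $\tfrac{m}{2}\log N$ in \eqref{Hadamard-Symf} for $\sym^2 f$ and $\sym^4 f$ (namely $3\cdot 1 + 1\cdot 2 = 5$), with no $\log N$ contribution from the $\Gamma_\R$ or $\chi_0$ pieces.
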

\begin{proof}
    By the defintion of $L_{(4)}$ in \eqref{eqn:def-power-l-fns}, we have
    \begin{equation}\label{eq: Factorization of log derivative of f^4}
    -\frac{L_{(4)}'}{L_{(4)}}\left(s,f\right) = -2 \frac{L'}{L}(s,\chi_0) - 3\frac{L'}{L}\left(s,\sym^2f\right) - \frac{L'}{L}\left(s,\sym^4f\right) - \sum_{p\mid N}\frac{3 \log p}{p^{s+1} - 1}.
    \end{equation}
    The last term in \eqref{eq: Factorization of log derivative of f^4} is obtained by observing that $\alpha_1(p)^2 = 1/p$ if $p \mid N$.  
Using \cite[Lemma 3 and Lemma 7]{McCurley}, we have
\begin{equation}\label{Dirichlet-principal,m=0,2,4}
    \begin{split}
  &\Re \left(\frac{1}{\sqrt{5}}\frac{L'}{L}(\sigma_1 + imt,\chi_0) - \frac{L'}{L}(\sigma + imt,\chi_0)\right)   \\
  < &\begin{cases}
  \frac{1}{\sigma - 1} - 0.8973 - s(N)&\text{ if }m=0\\
      \frac{\sigma -1}{(\sigma - 1)^2 + 4t^2} + 0.1565 - \frac{\kappa}{2} \log \pi + T(N,1) &\text{ if } m = 2,\,\abs{t} < 1\\
      \frac{\kappa}{2} \log \abs{t} + 0.3530 - \frac{\kappa}{2} \log \pi + T(N,1) &\text{ if }m = 2,\,\abs{t} \geq 1\\
     \frac{\sigma -1}{(\sigma - 1)^2 + 16t^2} + 0.3636 - \frac{\kappa}{2} \log \pi + T(N,1) &\text{ if } m = 4,\,\abs{t} < 1\\ 
    \frac{\kappa}{2} \log \abs{t} + 0.4080 - \frac{\kappa}{2} \log \pi + T(N,1) &\text{ if }m = 4,\,\abs{t} \geq 1.\\ 
  \end{cases}
    \end{split}
\end{equation}
Next, we  apply \eqref{Gamma-symf}, \eqref{Hadamard-Symf}, Lemma \ref{lem:Stechkin} and the bounds in \cite[Lemma 1 and Lemma 2]{McCurley} for $m=0,2,4$ to get
\begin{equation}\label{Sym^2f,m=0,2,4}
    \begin{split}
  &\Re \left(\frac{1}{\sqrt{5}}\frac{L'}{L}(\sigma_1 + imt,\sym^2f) - \frac{L'}{L}(\sigma + imt,\sym^2f)\right)   \\
&< \kappa \log N - \frac{\kappa}{2}\log \pi - \kappa \log 2\pi+ M(k-1,m,t) + C(k-1,m,t)\\
& \hspace{2em}+
\begin{cases}
 0.2469 &\text{ if }m=0\\
 0.2469 &\text{ if }m = 2,\,\abs{t} < 1\\
 0.3915 + \frac{\kappa}{2} \log\abs{t}, &\text{ if }m = 2,\,\abs{t} \geq 1\\
 0.5842&\text{ if }m = 4,\,\abs{t} < 1\\
 \frac{\kappa}{2} \log \abs{t} + 0.4266&\text{ if }m = 4,\,\abs{t} \geq 1,
\end{cases}\\
\end{split}
\end{equation}
and 
\begin{equation}\label{Sym^4f,m=0,2,4}
    \begin{split}
  &\Re \left(\frac{1}{\sqrt{5}}\frac{L'}{L}(\sigma_1 + imt,\sym^4f) - \frac{L'}{L}(\sigma + imt,\sym^4f)\right)   \\
&< 2\kappa \log N - \frac{\kappa}{2}\log \pi - 2\kappa \log 2\pi+ M(k-1,m,t) + C(k-1,m,t) \\
&\hspace{2em}+ M(2(k-1),m,t) + C(2(k-1),m,t)+
\begin{cases}
 0.2469 &\text{ if }m=0\\
 0.2469 &\text{ if }m = 2,\,\abs{t} < 1\\
 0.3915 + \frac{\kappa}{2} \log\abs{t}, &\text{ if }m = 2,\,\abs{t} \geq 1\\
 0.5842&\text{ if }m = 4,\,\abs{t} < 1\\
 \frac{\kappa}{2} \log \abs{t} + 0.4266&\text{ if }m = 4,\,\abs{t} \geq 1.
\end{cases}\\
\end{split}
\end{equation}
We now consider the last term on the right hand side of \eqref{eq: Factorization of log derivative of f^4}.  We set
\[
    G(s) = - 3\sum_{p\mid N}\frac{\log p}{p^{s+1} - 1}.
\]
We have 
\[
    G(\sigma) -\frac{1}{\sqrt{5}} G(\sigma_1) = 3\left(\frac{1}{\sqrt{5}}\sum_{p \mid N}\frac{\log p}{p^{\sigma_1 +1} - 1} - \sum_{p \mid N}\frac{\log p}{p^{\sigma +1} - 1}\right) = -s_1(N).
\]
For $m \neq 0$, we use
\begin{equation*}
    \begin{split}
        \Re\left(G(s) - \frac{1}{\sqrt{5}}G(s_1)\right) &\leq \abs{G(s)} + \frac{\abs{G(s_1)}}{\sqrt{5}}\\
        & \leq 3\left(\frac{1}{\sqrt{5}}\sum_{p \mid N}\frac{\log p}{p^{\sigma_1 +1} - 1} + \sum_{p \mid N}\frac{\log p}{p^{\sigma +1} - 1}\right)\\& = R(N,1).
    \end{split}
\end{equation*}
Thus,
\begin{equation}\label{ramified-factors-sym4}
\Re(G(s) - G(s_1)) \leq \begin{cases}
 -s_1(N) &\text{ if }m = 0\\
 R(N,1) &\text{ if }m \neq 0.\\
 \end{cases}
 \end{equation}
We now take the Ste\v{c}kin difference of \eqref{eq: Factorization of log derivative of f^4} and apply \eqref{Dirichlet-principal,m=0,2,4}, \eqref{Sym^2f,m=0,2,4},  \eqref{Sym^4f,m=0,2,4} and \eqref{ramified-factors-sym4} to get 
\begin{equation}
\begin{split}
    &j_{f,4}(\sigma,m,t) < 5\kappa \log N - 2\kappa \log \pi - 5\kappa \log 2\pi \\
    & + 4M(k-1,m,t) + 4C(k-1,m,t) + M(2k-2,m,t) + C(2k-2,m,t)\\
    &+\ell_{f,4}(\sigma,m,t),
\end{split}
\end{equation}
where
\begin{equation*}
    \begin{split}
        \ell_{f,4}(\sigma,m,t)
        & = \begin{cases}
        \frac{2}{\sigma - 1} - 0.807 - 2 s(N) - s_1(N) &\text{ if }m=0\\
        \\
        \frac{2(\sigma - 1)}{(\sigma - 1)^2 + 4t^2} + 0.667809 + 2T(N,1) + R(N,1)&\text{ if }m=2,\,\abs{t} < 1\\
        \\
            3 \kappa \log\abs{t} + 1.6392089
             + 2T(N,1) + R(N,1)&\text{ if }m = 2,\,\abs{t} \geq 1\\
            \\
        \frac{2(\sigma - 1)}{(\sigma - 1)^2 + 16t^2} + 2.431209 + 2T(N,1) + R(N,1)&\text{ if }m=4,\,\abs{t} < 1\\
        \\
        3 \kappa \log\abs{t} + 1.8896089 + 2T(N,1) + R(N,1)&\text{ if }m = 4,\,\abs{t} \geq 1.\\    
        \end{cases}
    \end{split}
\end{equation*} 

By breaking up our bound into a main and error contribution like in Lemma \ref{lem: j_f2(sigma,mt) bound} and applying the bounds for $M$ and $C$ from Corollary \ref{cor: Gamma Bound with M and C Functions}, we conclude the lemma.
   \end{proof}

\section{Proof of Zero-Free Region for \texorpdfstring{$\abs{t}\geq 1$}{t>1}}\label{sec: ZFR for t large}

    By Proposition \ref{prop:positivity}, we have
\begin{align*}
    \begin{split}\label{eqn:positivity-applied}
    0 &\leq 8 a^2 b^2 j(\sigma, 0) + (a_2 - 4) j_{f ,2}(\sigma, 0) + 3 j_{f,4}(\sigma, 0) \\
    &{} + (a_1 - 3 a_3) j_f(\sigma, t) + 3 a_3 j_{f,3}(\sigma, t) + (a_2 - 4) j_{f,2} (\sigma, 2 t) + 4 j_{f,4}(\sigma, 2t) \\
    &{} + a_3 j_{f,3}(\sigma, 3t) + j_{f,4}(\sigma, 4t),
    \end{split}
\end{align*}
where the coefficients $a,b,a_1,a_2$ and $a_3$ are given in  \eqref{eqn:coefficients-1}, \eqref{eqn:coefficients-2} and \eqref{eqn:coefficients-3}.

Let $\beta_0 + it$ be a zero of $L(s,f)$ such that $\beta_0 > 1/2$ and $\abs{t} \geq 1$.  Applying Lemmas \ref{lem: j_f0(sigma,0) bound}, \ref{lem: j_f(sigma,t) bound}, \ref{lem: j_f2(sigma,mt) bound}, \ref{lem: j_f3(sigma,t) bound}, \ref{lem: j_f3(sigma,3t) bound}, and \ref{lem: j_f4(sigma,mt) bound}, we get, for $1 < \sigma < 1.15$,
\begin{equation}\label{inequality-1-t-large}
\begin{split}
    \frac{a_1 - 3a_3}{\sigma - \beta_0}  + \frac{6a_3}{\sigma - \beta_0}
    & < \frac{8a^2b^2 + a_2 +2}{\sigma-1} + \kappa \log N\left(\frac{a_1}{2} + 2a_2 + \frac{17}{2}a_3 + 32\right) \\
    &\hspace{2em} + (\kappa \log k + \kappa \log \abs{t})(a_1 + 2a_2 + 13a_3 + 32)\\
    & \hspace{2em}- (a_2-4)s(N) - 6s(N) -3s_1(N) + 3a_3 R(N) + (a_2-4)T(N,1) \\
    & \hspace{2em}+ 8T(N,1) + 4R(N,1) + 2T(N,1) + R(N,1) + a_3 R(N)+ C(a,b),
\end{split}
\end{equation}
where
\begin{equation*}\label{C}
\begin{split}
    C(a,b) &= 8a^2b^2(-0.601655) + (a_2 - 4)(-1.66308) + 3(-5.42237)\\
    &\hspace{2em}+(a_1 - 3a_3)(-0.48973) + 3a_3(-1.9409) + (a_2 - 4)(-0.292925)\\
    &\hspace{2em} + 4(-1.63101)+ a_3(1.20626) -0.410404\\& = -105.993.
\end{split}
\end{equation*}
Moreover, since $R(N) < \sqrt{2}T(N,1)$ and $R(N,1) < 3/2 T(N,1)$, we have
\begin{equation}\label{inequality-2-t-large}
\begin{split}
    & - (a_2-4)s(N) - 6s(N) -3s_1(N) + 3a_3 R(N) + (a_2-4)T(N,1) + 8T(N,1) + 4R(N,1)\\
  & + 2T(N,1) + R(N,1) + a_3 R(N)\\
  & < -(a_2 +2)s(N) - 3s_1(N) + \left(4a_3\sqrt{2} + a_2 + \frac{27}{2}\right) T(N,1).
\end{split}
\end{equation}
Combining \eqref{inequality-1-t-large} and \eqref{inequality-2-t-large}, we get
\begin{equation}\label{inequality-3-t-large}
\begin{split}
    \frac{a_1 + 3a_3}{\sigma - \beta_0}  
  & < \frac{8a^2b^2 + a_2 + 2}{\sigma - 1} + \kappa \log (Nk\abs{t})[a_1 + 2a_2 + 13a_3 + 32]\\
  & \hspace{2em}+ C(a,b) - (a_2 +2) s(N) - 3s_1(N)\\
  & \hspace{2em}+ \left(4a_3\sqrt{2} + a_2 + \frac{27}{2}\right) T(N,1) - \left( \frac{a_1}{2} + \frac{9a_3}{2}\right) \kappa \log N.
\end{split}
  \end{equation}  
We write
\begin{equation}\label{inequality-4-t-large}
\left(4a_3\sqrt{2} + a_2 + \frac{27}{2}\right) T(N,1)- (a_2 +2) s(N) - 3s_1(N) - \left( \frac{a_1}{2} + \frac{9a_3}{2}\right) \kappa \log N
     = \sum_{p \mid N} A_p \log p,
     \end{equation}
     where
     \begin{equation}\label{eqn:ApDef}A_p = -\kappa \frac{a_1 + 9 a_3}{2} + \frac{4 a_3 \sqrt{2} + \frac{23}{2}}{p^{\sigma} - 1} + \frac{4 a_3 \sqrt{2} + 2a_2 + \frac{31}{2}}{\sqrt{5} (p^{\sigma_1} - 1)} - \frac{9}{p^{\sigma + 1} - 1} + \frac{9}{\sqrt{5}(p^{\sigma_1 + 1} - 1)}.\end{equation}
By direct computations, we see that $A_p<0$ for all $p\geq 5$ and 
\[
    \sum_{\substack{p \mid N \\ p < 5}}A_p \log\,p + C(a,b) < A_2 \log\,2 + A_3 \log\,3 -105.9932431 < 0.
\]
Combining the above inequalities with \eqref{inequality-3-t-large}, we get
\begin{equation}\label{inequality-5-t-large}
\frac{A}{\sigma - \beta_0} < \frac{B}{\sigma - 1} + C\kappa \log (Nk\abs{t}),
\end{equation}
where $A = a_1 + 3a_3$, $B = 8a^2b^2 + a_2 + 2$ and $C = a_1 + 2a_2 + 13a_3 + 32$. 

\noindent We now set 
\[
    \sigma = 1 + \frac{r}{\log (Nk\abs{t})},
\]
for a choice of $r$ to be determined later.  By \eqref{inequality-5-t-large}, we get
\[
    \beta_0 < 1 - \left(\frac{(A-B)r - C\kappa r^2}{B + C\kappa r}\right)\frac{1}{\log (Nk\abs{t})}.
\]
The function 
\[
    f(r) = \frac{(A-B)r - C\kappa r^2}{B + C\kappa r}
\]
attains its maximum value at 
\begin{equation*}\label{eqn:optimal-r}r = \frac{\sqrt{AB} - B}{C\kappa}=0.1175,\end{equation*}
giving us the inequality
\[
    \beta_0\leq 1 - \left(\frac{A + B - 2\sqrt{AB}}{C\kappa}\right)\frac{1}{\log (Nk\abs{t})} < 1 - \frac{1}{16.7053 \log (Nk\abs{t})}.
\]
We note that if $k=2$, then there exists a newform of weight $k$ with respect to $\Gamma_0(N)$ if and only if $N \geq 2$. Thus, the above choice of $r$ ensures that $1 < \sigma < 1.15$.

Thus, if $\beta_0 + it$ is a zero of $L(s,f)$ such that $\beta_0 > 1/2$ and $\abs{t} \geq 1$, then
\begin{equation*}\label{beta-bound-t-large}
\beta_0 < 1 - \frac{1}{16.7053 \log (Nk\abs{t})}.
\end{equation*}


\section{Proof of Zero-Free Region for \texorpdfstring{$\abs{t}<1$}{t<1}}\label{sec: ZFR for t small}

In this section, we extend the zero-free region obtained in the previous section to the case $\abs{t} <1$. We consider the subcases $\abs{t}\leq \frac{\gamma}{\log kN}$ and $\frac{\gamma}{\log kN}<\abs{t}< 1$ for some carefully chosen $\gamma>0$.

Let $\beta_0 + it$ be a zero of $L(s,f)$ such that $\beta_0 > 1/2$ and $\abs{t} < 1$. We write $\beta_0=1-\frac{c}{\log kN}$ for some $c>0$. To deal with the subcase $\abs{t}\leq \frac{\gamma}{\log kN}$, we require the following result.

\begin{lem}\label{lem:positivity-t-too-small}
    For $1<\sigma<\phi$, we have 
    \[
0 \leq j(\sigma, 0) + 2 j_{f}(\sigma, 0) + j_{f ,2}(\sigma, 0) .
\]
\end{lem}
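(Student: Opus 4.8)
The plan is to realize the left-hand side as a nonnegative Dirichlet series, exactly as in the proof of Proposition~\ref{prop:positivity}, but now using the classical trigonometric inequality \eqref{eqn:trig} rather than the quartic polynomial. First I would expand each of the three Ste\v{c}kin differences $j(\sigma,0)=j_{f,0}(\sigma,0)$, $j_f(\sigma,0)=j_{f,1}(\sigma,0)$ and $j_{f,2}(\sigma,0)$ as Dirichlet series over prime powers $n=p^l$, using Lemma~\ref{L_q-log-der} together with the factor $\bigl(1-\tfrac{1}{\sqrt5\,n^{\sigma_1-\sigma}}\bigr)$ coming from the definition of $j_{f,m}$. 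This gives
\[
j(\sigma,0)+2j_f(\sigma,0)+j_{f,2}(\sigma,0)
= \sum_{n\ge1}\Bigl(1-\frac{1}{\sqrt5\,n^{\sigma_1-\sigma}}\Bigr)\frac{\Lambda(n)}{n^\sigma}\bigl(1+2\Lambda_f(n)+\Lambda_f(n)^2\bigr),
\]
where I have invoked $\Lambda_{f,1}(n)=\Lambda_f(n)$ and $\Lambda_{f,2}(n)=\Lambda_f(n)^2$ from Lemma~\ref{L_q-log-der}.

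The key observation is then that the arithmetic factor is a perfect square: $1+2\Lambda_f(n)+\Lambda_f(n)^2=(1+\Lambda_f(n))^2\ge0$. Since $\Lambda_f(n)$ is real (it equals $\alpha_1(p)^l+\alpha_2(p)^l$ with $\alpha_2(p)=\overline{\alpha_1(p)}$ when $p\nmid N$ by Deligne, and $\alpha_1(p)^l$ real when $p\mid N$), this is genuinely a nonnegative real number. Next I would check that the remaining factor $\bigl(1-\tfrac{1}{\sqrt5\,n^{\sigma_1-\sigma}}\bigr)\tfrac{\Lambda(n)}{n^\sigma}$ is nonnegative for every $n\ge2$: indeed $\Lambda(n)\ge0$, $n^{\sigma}>0$, and $\sigma_1-\sigma>0$ (since $\sigma_1>\phi>\sigma$ throughout $1<\sigma<\phi$), so $n^{\sigma_1-\sigma}\ge 2^{\sigma_1-\sigma}>1>\tfrac{1}{\sqrt5}$, making the bracket positive. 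Hence every term of the series is nonnegative, and the sum is $\ge0$, which is exactly the claim.

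I do not expect any real obstacle here — this is the ``degree-$2$'' analogue of Proposition~\ref{prop:positivity} and the work is purely bookkeeping: confirming the coefficient identities $1+2\lambda+\lambda^2=(1+\lambda)^2$ and re-deriving the Dirichlet-series expansion of the $j_{f,m}(\sigma,0)$ that already underlies the proof of Proposition~\ref{prop:positivity}. The only point requiring a line of care is the sign of the Euler-factor weight $\bigl(1-\tfrac{1}{\sqrt5}n^{\sigma_1-\sigma}\bigr)$ for small $n$, i.e. $n=2$, but as noted $2^{\sigma_1-\sigma}>1>1/\sqrt5$ settles it for all admissible $\sigma$. So the proof is a short paragraph: expand, recognize the square $1+2\Lambda_f(n)+\Lambda_f(n)^2=(1+\Lambda_f(n))^2$, note positivity of each factor, conclude.
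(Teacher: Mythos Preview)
Your proposal is correct and follows exactly the paper's own proof: expand $j(\sigma,0)+2j_f(\sigma,0)+j_{f,2}(\sigma,0)$ as a Dirichlet series via Lemma~\ref{L_q-log-der}, then observe $1+2\Lambda_f(n)+\Lambda_{f,2}(n)=(1+\Lambda_f(n))^2\ge0$. The only additions you make (realness of $\Lambda_f(n)$, positivity of the Ste\v{c}kin weight) are routine checks the paper leaves implicit.
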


\begin{proof}
    We  shall simply expand out each of the terms above as a Dirichlet series to get that 
    \begin{align*}
        &j(\sigma, 0) + 2 j_{f}(\sigma, 0) + j_{f ,2}(\sigma, 0)\\
        &=\sum_{n\geq 1}\left(1-\frac{1}{\sqrt{5}n^{\sigma_1-\sigma}}\right)\frac{\Lambda(n)}{n^{\sigma}}\left(1+2\Lambda_f(n)+\Lambda_{f,2}(n)\right).
    \end{align*}
    The lemma follows by observing that
    \[
    \left(1+2\Lambda_f(n)+\Lambda_{f,2}(n)\right)=\left(1+\Lambda_f(n)\right)^2\geq 0.
    \]
\end{proof}

By combining equations \eqref{eq: error term for j_1 in all cases}, \eqref{jf-bound-simplified-t-0-Error}, and \eqref{eqn:jf2mt-bound-error-sigma-phi}, we see that 
\begin{align}\label{eqn:negativity-error-t-too-small}
    j_1^{\text{Error}}(\sigma,0)+2j_f^{\text{Error}}(\sigma,0)+j_{f,2}^{\text{Error}}(\sigma,0)
    &= -0.601655 + 2(0.603562)-1.36737-s(N)\nonumber\\&
    <-0.761901-s(N)\nonumber\\&<0,
\end{align}
where the last inequality follows since $s(N)>0$.  Combining Lemma \ref{lem:positivity-t-too-small} and \eqref{eqn:negativity-error-t-too-small} gives
\[
0\leq j_1^{\text{Main}}(\sigma,0)+2j_f^{\text{Main}}(\sigma,0)+j_{f,2}^{\text{Main}}(\sigma,0).
\]
Now by using  \eqref{eq: main term for small j_1 in all cases}, \eqref{jf-bound-simplified-t-0-Main}, and \eqref{jf2mt-bound-main}, we get
\begin{equation}\label{eqn:inequality_for_ZFR_too_small_t}
4\frac{\sigma-\beta_0}{(\sigma-\beta_0)^2+t^2}\leq \frac{2}{\sigma-1}+2\kappa\log N+3\kappa\log k\leq \frac{2}{\sigma-1} +3\kappa\log(kN).
\end{equation}
We  recall that $\abs{t} \leq\frac{\gamma}{\log kN}$, and we set  $\sigma = 1+\frac{r_1}{\log kN}$  for some $r_1>0$ chosen such that $1<\sigma<\phi$. Using this notation in \eqref{eqn:inequality_for_ZFR_too_small_t}, we obtain
\begin{align*}
    \frac{4(r_1+c)}{(r_1+c)^{2}+\gamma^2} \leq \frac{2}{r_1}  + 3\kappa.
\end{align*}
Solving for $c>0$ gives
\begin{equation*}\label{eqn:c-t-too-small}
    c \geq \frac{-3r_1\kappa + \sqrt{4r_1^2 - 4\gamma^2 -12 r_1 \gamma^2 \kappa - 9 r_1^2\gamma^2\kappa }}{2 + 3r_1 \kappa}.
\end{equation*}
Observe that the inequality
\begin{equation*}
 \frac{-3r_1\kappa + \sqrt{4{r_1}^2 - 4\gamma^2 -12 r_1 \gamma^2 \kappa - 9 {r_1}^2\gamma^2\kappa }}{2 + 3r_1 \kappa} \geq\frac{1}{16.7053}
\end{equation*}
is satisfied if we take $\gamma=0.30992$ and $r_1= 0.675015$. Hence, if $\beta_0+it$ is a zero of $L(s,f)$ with $\beta_0>\frac12$ and $\abs{t}\leq \frac{0.30992}{\log(kN)}$, then \begin{equation*}\label{beta-bound-t-too-small}
\beta_0 < 1 - \frac{1}{16.7053 \log (Nk)}.
\end{equation*}

If $\frac{\gamma}{\log kN}<\abs{t}< 1$, we apply Proposition \ref{prop:positivity} followed by Lemmas \ref{lem: j_f0(sigma,0) bound}, \ref{lem: j_f(sigma,t) bound}, \ref{lem: j_f2(sigma,mt) bound}, \ref{lem: j_f3(sigma,t) bound}, \ref{lem: j_f3(sigma,3t) bound}, and \ref{lem: j_f4(sigma,mt) bound}, to get

\begin{equation*}\label{eqn:main-inequality-t<1}
\begin{split}
 \frac{a_1 - 3a_3}{\sigma - \beta_0} + \frac{6a_3}{\sigma - \beta_0}  
 & < \frac{8a^2b^2 + a_2 +2}{\sigma - 1} + \left(32 + \frac{a_1}{2} + 2a_2 + \frac{17}{2}a_3\right)\kappa \log N\\
  &\hspace{2em}+ (32 + a_1 + 2a_2 + 13a_3) \kappa \log k\\
 &\hspace{2em}+ \frac{(a_2+4)(\sigma-1)}{(\sigma-1)^2+4t^2}+\frac{2(\sigma-1)}{(\sigma-1)^2+16t^2}{-\frac{2a_3(\sigma - \beta_0)}{(\sigma - \beta_0)^2 + 4t^2 }}\\
 &\hspace{2em}- (a_2 +2) s(N) - 3s_1(N) + 4a_3 R(N) + (a_2 + 6) T(N,1) + 5R(N,1)\\
 &\hspace{2em} + D(a,b),
\end{split}
\end{equation*}
where
\begin{equation*}\label{D(a,b)}
\begin{split}
D(a,b) &= -8a^2b^2(0.601655) + (a_2 - 4)(-1.66308) + 3(-5.42237)\\
&\hspace{1em}+ (a_1 - 3a_3)(-0.596438) + 3a_3(-2.3414) + (a_2 - 4)(-0.700692) + 4(-2.92759)\\
&\hspace{1em}+ 1.03a_3 -0.0298279\\&=-130.9760239.
\end{split}
\end{equation*}
It follows that 
\begin{equation}\label{inequality-2-t-not-too-small}
\begin{split}
    &\frac{a_1 + 3a_3}{\sigma - \beta_0} \\
    & < \frac{8a^2b^2 + a_2 +2}{\sigma-1} + \frac{(a_2+4)(\sigma -1)}{(\sigma-1)^2 + 4t^2} + \frac{2(\sigma -1)}{(\sigma-1)^2 + 16t^2}- \frac{2a_3(\sigma - \beta_0)}{(\sigma - \beta_0)^2 + 4t^2 }  \\&\hspace{2em}+ \kappa \log kN\left(32 + a_1 + 2 a_2 + 13a_3\right) +\sum_{p \mid N} A_p \log p +D(a,b),
\end{split}
\end{equation}
where $A_p$ is given in \eqref{eqn:ApDef}. One can easily verify that $A_p\leq 0$ for all $p\geq5$ while 
\[A_2\log2+A_3\log3< {37.5815}.\]
Using this in \eqref{inequality-2-t-not-too-small} yields
\begin{equation*}\label{inequality-3-t-not-too-small}
\begin{split}
    &\frac{a_1 + 3a_3}{\sigma - \beta_0} \\
    & < \frac{8a^2b^2 + a_2 +2}{\sigma-1} +\kappa \log kN\left(32 + a_1 + 2 a_2 + 13a_3\right)\\&+ \frac{(a_2+4)(\sigma -1)}{(\sigma-1)^2 + 4t^2} + \frac{2(\sigma -1)}{(\sigma-1)^2 + 16t^2}- \frac{2a_3(\sigma - \beta_0)}{(\sigma - \beta_0)^2 + 4t^2 }   {-93.3945}.
\end{split}
\end{equation*}
We write $\sigma=1+\frac{r_2}{\log kN}$ for some $r_2>0$ chosen such that $1<\sigma<1.15$. Using this notation and observing that
$\frac{(a_2+4)(\sigma -1)}{(\sigma-1)^2 + 4t^2} - \frac{2a_3(\sigma - \beta_0)}{(\sigma - \beta_0)^2 + 4t^2 } $ is decreasing in $t$, we get
\begin{equation}\label{eqn:section7-pre-zfr}
\begin{split}
    \frac{a_1 + 3a_3}{r_2+c} &< \frac{8a^2b^2 + a_2 +2}{r_2} + \kappa \left(32 + a_1 + 2 a_2 + 13a_3\right) +\frac{(a_2+4)r_2}{r_2^2 + 4\gamma^2} + \frac{2r_2}{r_2^2 + 16\gamma^2}- \frac{2a_3(r_2+c)}{(r_2+c)^2 + 4\gamma^2 }.
\end{split}
\end{equation}
For $\delta>0$ chosen so that
\begin{equation}\label{sec7req}
    \frac{(a_2 + 4)r_2}{r_2^2 + 4\gamma^2} + \frac{2r_2}{r_2^2 + 16\gamma^2} - \frac{2a_3(r_2 + c)}{(r_2 + c)^2 + 4\gamma^2} - \delta < 0,
\end{equation}
we see that \eqref{eqn:section7-pre-zfr} yields
\begin{equation*}\label{eqn:section7-pre-zfr-1}
\begin{split}
    \frac{A}{r_2+c} &< \frac{B}{r_2} + C\kappa  +\delta,
\end{split}
\end{equation*}
where, as in Section \ref{sec: ZFR for t large}, we set $A=a_1 + 3a_3$, $B=8a^2b^2 + a_2 +2$ and $C=32 + a_1 + 2 a_2 + 13a_3$. It follows that 
\begin{equation*}
c>\frac{(A-B)r_2-C\kappa r_2^2-\delta r_2^2}{B+C\kappa r_2+\delta r_2}.
\end{equation*}
The function 
\[
    f_{\delta}(r_2) = \frac{(A-B)r_2 - C\kappa r_2^2-\delta r_2^2}{B + C\kappa r_2+\delta r_2}
\]
attains its maximum value at 
\begin{equation*}\label{eqn:delta-optimal-r}r_2 = \frac{\sqrt{AB} - B}{C\kappa+\delta},\end{equation*}
giving us the inequality
\begin{equation*}\label{eqn:delta-optimal-c}
    c > \frac{A + B - 2\sqrt{A B}}{C \kappa + \delta}.
\end{equation*}
Substituting $r_2=\frac{\sqrt{AB} - B}{C\kappa+\delta}$ and $c=\frac{A + B - 2\sqrt{A B}}{C \kappa + \delta}$  in  \eqref{sec7req} gives
\begin{equation}\label{eqn:sec7req-refined}
    \frac{(a_2 + 4)(\sqrt{AB} - B)(C\kappa+\delta)}{\left(\sqrt{AB} - B\right)^2 + 4\gamma^2(C\kappa+\delta)^2} + \frac{2(\sqrt{AB} - B)(C\kappa+\delta)}{\left(\sqrt{AB} - B\right)^2 + 16\gamma^2(C\kappa+\delta)^2} - \frac{2a_3(A-\sqrt{AB})(C\kappa+\delta)}{(A-\sqrt{AB})^2+4\gamma^2(C\kappa+\delta)^2}- \delta < 0.
    \end{equation}

Using $\gamma=0.30992$, one can verify that \eqref{eqn:sec7req-refined} is satisfied if we take $\delta=1.62622$. With this choice of $\delta$, we get $c>\frac{1}{16.9309}$.

\section*{Acknowledgements} This research was supported by the PIMS Collaborative Research Group on $L$-functions in Analytic Number Theory and initiated at the Inclusive Paths in Explicit Number Theory Summer School hosted at BIRS-UBCO. We are thankful to BIRS, UBCO and PIMS for the financial support and for providing conducive research environments. We are also grateful to the summer school organizers for facilitating this collaboration. The research of A. H. is supported by NSERC Discovery Grant RGPIN-2025-05777. The research of S. K. is supported by the NSF GRFP under Grant No.\ DGE-2241144. The research of K. S. is supported by ANRF Core Research Grant CRG/2023/001743.

\bibliographystyle{amsalpha}
\bibliography{bibliography}

\end{document}